\let\oldmarginpar\marginpar
\renewcommand\marginpar[1]{\oldmarginpar[\raggedleft\footnotesize #1]%
{\raggedright\footnotesize #1}}
\long\def\@savemarbox#1#2{\global\setbox#1\vtop{\hsize\marginparwidth 
  \@parboxrestore\tiny\raggedright #2}}
\renewcommand*{\backref}[1]{}
\renewcommand*{\backrefalt}[4]{
  \ifcase #1
  [No citations.]
  \or [#2]
  \else [#2]
  \fi }
   \def\MR#1{}
\theoremstyle{plain}
\newtheorem{theorem}{Theorem}[section]
\newtheorem{corollary}[theorem]{Corollary}
\newtheorem{lemma}[theorem]{Lemma}
\newtheorem{proposition}[theorem]{Proposition}
\newtheorem{conjecture}[theorem]{Conjecture}
\newtheorem*{namedtheorem}{\theoremname}
\newcommand{\theoremname}{testing}
\newenvironment{named}[1]{\renewcommand{\theoremname}{#1}\begin{namedtheorem}}{\end{namedtheorem}}
\theoremstyle{definition}
\newtheorem{definition}[theorem]{Definition}
\newtheorem{question}[theorem]{Question}
\newtheorem{remark}[theorem]{Remark}
\newtheorem{example}[theorem]{Example}
\newcommand{\refthm}[1]{Theorem~\ref{Thm:#1}}
\newcommand{\reflem}[1]{Lemma~\ref{Lem:#1}}
\newcommand{\refprop}[1]{Proposition~\ref{Prop:#1}}
\newcommand{\refconj}[1]{Conjecture~\ref{Conj:#1}}
\newcommand{\refitm}[1]{\eqref{Itm:#1}}
\newcommand{\refdef}[1]{Definition~\ref{Def:#1}}
\newcommand{\refsec}[1]{Section~\ref{Sec:#1}}
\newcommand{\reffig}[1]{Figure~\ref{Fig:#1}}
\newcommand{\refques}[1]{Question~\ref{Ques:#1}}
\newcommand{\R}{\mathbb{R}}
\newcommand{\CC}{\mathbb{C}}
\newcommand{\HH}{\mathbb{H}}
\renewcommand{\P}{\overline P}
\newcommand{\F}{\mathcal F}
\newcommand{\W}{\mathcal W}
\newcommand{\vol}{{\rm vol}}
\newcommand{\guts}{{\rm guts}}
\newcommand{\cut}{{\backslash \backslash}}
\newcommand{\bdy}{\partial}
\newcommand{\voct}{{v_{\rm oct}}}
\newcommand{\vtet}{{v_{\rm tet}}}
\newcommand{\toF}{{\overset{F}{\longrightarrow}}}
\newcommand{\volp}{{\rm vol}^{\perp}}
\newcommand{\half}{\frac{1}{2}}
\newcommand{\Q}{\mathcal Q}
\newcommand{\T}{\mathcal T}
\def\split{\setminus \!\! \setminus}
\title[Right-angled polyhedra and alternating links]{Right-angled polyhedra and alternating links}
\author[A.\ Champanerkar]{Abhijit Champanerkar}
\author[I.\ Kofman]{Ilya Kofman}
\address{Department of Mathematics, College of Staten Island \& The Graduate Center, City University of New York, New York, NY}
\email{abhijit@math.csi.cuny.edu, ikofman@math.csi.cuny.edu}
\author[J. \ Purcell]{Jessica S.\ Purcell}
\address{School of Mathematics, 9 Rainforest Walk, Monash University, Victoria 3800, Australia}
\email{jessica.purcell@monash.edu}
\begin{document}

\begin{abstract}
To any prime alternating link, we associate a collection of hyperbolic
right-angled ideal polyhedra by relating geometric, topological and
combinatorial methods to decompose the link complement.  The sum of
the hyperbolic volumes of these polyhedra is a new geometric link
invariant, which we call the right-angled volume of the alternating
link.  We give an explicit procedure to compute the right-angled
volume from any alternating link diagram, and prove that it is a new
lower bound for the hyperbolic volume of the link.
\end{abstract}

\maketitle

\section{Introduction}
Right-angled structures have featured in many striking results in
$3$-manifold geometry, topology and group theory.  Alternating knot
complements decompose into pieces with essentially the same
combinatorics as the alternating diagram, so the hyperbolic geometry
of alternating knots is closely related to their diagrams.  Therefore, it is
natural to ask: Which alternating links are right-angled?  In other
words, when does the link complement with the complete hyperbolic
structure admit a decomposition into ideal hyperbolic right-angled
polyhedra?  Surprisingly, besides the Whitehead and Borromean links,
only two other alternating links are known to be right-angled \cite{Gan}.
In Section~\ref{sec:right-angled-knots} below, we conjecture that,
whether alternating or not, there does not exist a right-angled knot.

However, we can obtain right-angled structures from alternating
diagrams, which do not give the complete hyperbolic structure, but
nevertheless provide useful and computable geometric link invariants.
These turn out to be related to previous geometric constructions
obtained from alternating diagrams, and below we relate these
constructions to each other in new ways.

Specifically, in this paper we associate a set of hyperbolic
right-angled ideal polyhedra to any reduced, prime, alternating link
diagram.  We prove that these polyhedra can be described equivalently
from the following geometric, topological and combinatorial
perspectives.  This equivalence implies that such a set of
right-angled polyhedra is a link invariant, whose volume sum we call
the {\em right-angled volume} $\volp(K)$ of the alternating link $K$.
We prove that this new geometric link invariant is a lower bound for
the hyperbolic volume of the link, and is asymptotically sharp for
certain sequences of knots and links.

\subsection*{Geometry}
The {\em guts} of a $3$--manifold cut along an essential surface is
the union of all the hyperbolic pieces in its JSJ-decomposition.
Lackenby~\cite{Lackenby}, building on work of Thurston~\cite{tnotes},
Agol~\cite{Agol:Guts} and Menasco~\cite{menasco:polyhedra}, described
such a geometric decomposition of the complement of any alternating
link by cutting along its two checkerboard surfaces.
In~\cite{CKPgmax}, we determined explicitly the guts of the manifolds
obtained from alternating links with certain extra hypotheses, cut
along \emph{both} checkerboard surfaces.  In \refsec{geometry}, we
again analyse the guts of these manifolds, but we remove the extra
hypotheses on the alternating links, and we construct the associated
hyperbolic right-angled {\em guts polyhedra}.

\subsection*{Topology}
Bonahon-Siebenmann~\cite{BonahonSiebenmann}, building on work of
Conway~\cite{Conway} and Montesinos~\cite{Montesinos}, defined a {\em
  characteristic splitting} of any link diagram along Conway spheres
into arborescent and non-arborescent parts.  In the double branched
cover of the link, the arborsescent part is covered by a graph
manifold, and the non-arborsescent part by a hyperbolic manifold.
Menasco and Thistlethwaite~\cite{menasco84, menasco-thist:alternating,
  Thistlethwaite_JKTR, Thistlethwaite:Algebraic} showed that, up to
flypes, an alternating diagram can be decomposed only in limited ways
along invariant Conway spheres into alternating tangles.
Thistlethwaite~\cite{Thistlethwaite:Algebraic} used such tangles to
completely describe the characteristic splitting of any alternating
link diagram.  In \refsec{topology}, we use Thistlethwaite's results
to build {\em tangle polyhedra} associated with the non-arborescent
part of an alternating link.

\subsection*{Combinatorics}
Right-angled polyhedra are natural hyperbolic ``bricks'' which have
been used to construct hyperbolic 3-manifolds.  In 1931,
L\"obell~\cite{lobell} constructed the first example of a closed
orientable hyperbolic 3-manifold by gluing eight copies of the
right-angled 14-hedron.  Andreev's Theorem (\refthm{Andreev} below)
implies that, up to isometry, a hyperbolic right-angled ideal
polyhedron is uniquely determined by its combinatorial type.  In
\refsec{combinatorics}, we define {\rm rational reduction} of an
alternating link diagram, and then determine its diagrammatic
splitting into {\em Andreev polyhedra}.

\vspace{.1in}

One main result of this paper is the somewhat surprising fact that the three polyhedra, obtained from now-classic geometric, topological, and combinatorial methods, actually give the same link invariant for alternating links!

\begin{named}{\refthm{AndreevEqualsGuts}}
The Andreev polyhedra are identical to the tangle polyhedra and the guts polyhedra. 
\end{named}

\subsection*{Right-angled volume}
In \refsec{volume}, we define the right-angled volume $\volp(K)$ of an
alternating link $K$ as the sum of the hyperbolic volumes of these
right-angled polyhedra. It gives a new geometric link invariant for
alternating links.

\begin{named}{\refthm{GutsVolumeBound}}
  For any hyperbolic alternating link $L$ with hyperbolic volume $\vol(L)$,
  \[ \volp(L)\:\leq\:\vol(L). \]
\end{named}

Morover, we show the bound is asymptotically sharp: There exist many sequences of alternating links $K_n$ such that
\[ \lim_{n\to\infty}\frac{\volp(K_n)}{\vol(K_n)}=1. \]

To compare $\volp(L)$ with other volume bounds, we exhibit examples
for which $\volp(L)$ beats the best previous lower bounds,
from~\cite{AgolStormThurston, Lackenby}, by a factor of two. However,
for any Montesinos link, which may have arbitrarily large volume,
$\volp(L)=0$. Thus, as a lower volume bound, $\volp(L)$ should be used
together with other bounds. We discuss this further in \refsec{Discussion}.

\subsection*{Acknowledgements}
We thank David Futer, Anastasiia Tsvietkova, and Francis Bonahon for
useful discussions.  The first two authors acknowledge the support of
the Simons Foundation and PSC-CUNY.  The third author acknowledges the
support of the Australian Research Council.

\section{Geometry}\label{Sec:geometry}

\subsection{Background on guts}
By work of Menasco, if a link has a reduced, prime, alternating diagram that is not the diagram of a $(2,q)$-torus link, then the link complement is hyperbolic. However, it can be difficult to determine geometric information directly from a diagram.

We will cut a $3$--manifold along an essential surface and consider
its JSJ-decomposition \cite{Jaco, JacoShalen, Johannson}, which cuts
the $3$--manifold into components consisting of $I$-bundles, Seifert
fibered pieces, and \emph{guts}. To describe the guts, we need to set
up some definitions and notation. We will mainly consider 3-manifolds
$M$ that admit a finite volume hyperbolic structure. We may view such
a manifold as the interior of a compact manifold $\overline{M}$ with
torus boundary components. The fact that $M$ is hyperbolic means it
admits a hyperbolic structure $M \cong \HH^3/\Gamma$, where $\Gamma$
is a discrete subgroup of $\rm{PSL}(2,\CC)$. Under this structure, any
closed curve in a neighbourhood of $\bdy\overline{M}$ is isotopic to a
\emph{parabolic} element of $\Gamma$.

We view $M$ as an open manifold without boundary, but
at times it will be more convenient to consider the compact manifold $\overline{M}$. In the case of a link complement, $M=S^3-L$, the compact manifold $\overline{M}$ is homeomorphic to $S^3-N(L)$, where $N(L)$ denotes an open regular neighbourhood of $L$ in $S^3$. Then $\bdy\overline{M}$ is a collection of tori. These form the \emph{parabolic locus} of $\overline{M}$. More generally, the parabolic locus $\lambda$ of a compact 3-manifold $\overline{M}$ will consist of annuli and tori in $\bdy \overline{M}$. When we want to carefully keep track of the parabolic locus, we write the manifold as a pair $(\overline{M},\lambda)$. For our link example, we have the pair $(S^3-N(L),\bdy N(L))$.

Now, we wish to cut a 3-manifold $M$ along an essential surface $S$. When we view $M$ as an open manifold, then $S$ will be a properly embedded open surface, homeomorphic to the interior of a compact surface $\overline{S}$ with boundary on $\bdy \overline{M}$. Let $M\cut S$ denote the closure (in $M$) of the manifold obtained by removing a regular neighbourhood of $S$ from $M$.  The boundary of $M\cut S$ is homeomorphic to $\widetilde{S} = \bdy N(S)$, the double cover of $S$.

On the other hand, we may also express this information in terms of a pair. If $\overline{M}$ has parabolic locus $\bdy\overline{M}$ consisting of tori, then we express the cut manifold as a pair $(\overline{M}\cut \overline{S}, \bdy(\overline{M})\cut\bdy\overline{S})$. Note that the parabolic locus will now include annular components.

A pair $(\overline{M},\lambda)$ is called a \emph{pared acylindrical 3-manifold} if $\overline{M}$ is a compact, irreducible, atoroidal manifold with boundary (such as $S^3-N(L)$ or $\overline{M}\cut\overline{S}$) and $\lambda\subset \bdy\overline{M}$ is a union of incompressible annuli and tori, such that every map $(S^1\times I, S^1\times\bdy I) \to (\overline{M}, \bdy\overline{M}-\lambda)$ that is $\pi_1$-injective deforms as a map of pairs into $\lambda$. Denote $\bdy\overline{M}-\lambda$ by $\bdy_0\overline{M}$. Thurston showed that a pared acylindrical 3-manifold admits a hyperbolic metric with totally geodesic boundary $\bdy_0 M$ and parabolic locus $\lambda$ \cite{Morgan}.

For $M$ an open 3-manifold, and $S$ an essential surface, the \emph{guts} of $M\cut S$, denoted $\guts(M\cut S)$, is the union of all components in the JSJ-decomposition of $M\cut S$ that admit a hyperbolic structure. In terms of the notation of pared manifolds, let $(\overline{M}, \bdy\overline{M})$ denote the pair corresponding to $M$. Let $A$ denote the union of essential annuli in $M\cut S$. Let $M_1$ be a hyperbolic component of $(M\cut S)\cut A$, so $M_1$ is a component of $\guts(M\cut S)$. It is associated to a pair $(\overline{M}_1, \lambda_1)$, where $\overline{M}_1$ is the union of $M_1$ along with its boundary as a subset of $\overline{M}$, and $\lambda_1$ consists of $\bdy\overline{M}\cap \bdy M_1$ as well as any component of $\widetilde{A} \cap M_1$. That is, when we take the JSJ-decomposition of a 3-manifold, all essential annuli that we cut along to form the decomposition become part of the parabolic locus of the guts. 

\subsection{Checkerboard decomposition}
Suppose $L$ is a link that admits a reduced, prime, alternating
diagram.  Its two checkerboard surfaces are essential~\cite{Aumann,Lackenby},
and so we may follow the procedure outlined above and cut along a checkerboard surface $S$, obtaining the guts of $(S^3-L)\cut S$. Indeed, we will use a doubling procedure to cut along both checkerboard surfaces.

Let $L$ be a link with reduced, prime, alternating diagram, and associated checkerboard surfaces $B$ and $R$. It is well-known that cutting $S^3-L$ along both $B$ and $R$ simultaneously decomposes it into two identical (topological) ideal polyhedra~\cite{menasco:polyhedra}. For an alternating link, each of the two ideal polyhedra is obtained by taking edges and ideal vertices
corresponding to the diagram graph of the link. Call one of these
ideal polyhedra the \emph{checkerboard polyhedron} associated to the
link diagram.

Instead of cutting along both surfaces simultaneously, we cut along the two surfaces separately and consider the guts. Let $M_B$ denote the 3-manifold consisting of the guts of $(S^3-L)\cut B$, and $M_R$ the guts of $(S^3-L)\cut R$; i.e.\
\[ M_B = \guts((S^3-L)\cut B); \quad M_R = \guts((S^3-L)\cut R).\]
The boundary $\bdy_0 M_B$ consists of $M_B\cap \widetilde{B}$, and similarly $\bdy_0 M_R$ consists of $M_R\cap\widetilde{R}$. The parabolic locus of each consists of remnants of the link and essential annuli that we cut along to form the guts. 

Let $D(M_B)$ denote the double of $M_B$ along the surface $\bdy_0M_B$. This manifold admits a hyperbolic structure in which $\bdy_0 M_B$ is totally geodesic. Similarly for $D(M_R)$. 

In~\cite[Lemma~4.8]{CKPgmax}, we showed that the surface $R \cap M_B$ doubles to give an essential surface $DR$ in $D(M_B)$. Similarly, $B\cap M_R$ doubles to give an essential surface $DB$ in $D(M_R)$. Thus we may cut along these surfaces, and find the guts of the resulting pieces.  That is, 
consider the manifold $\guts(D(M_B)\cut DR)$. This is a 3-manifold with boundary consisting of $\widetilde{DR} \cap \guts(D(M_B)\cut DR)$. Its double $D(\guts(D(M_B)\cut DR))$ therefore admits a hyperbolic structure in which $\widetilde{DR} \cap \guts(D(M_B)\cut DR)$ is totally geodesic.

In \cite{CKPgmax}, we showed that under certain hypotheses on $L$, $\guts(D(M_B)\cut DR)$ is the entire manifold $D(M_B)\cut DR$. In this case, the double $D(\guts(D(M_B)\cut DR))$ is built by gluing eight copies of the original checkerboard polyhedron obtained by cutting $S^3-L$ along $B$ and $R$. We now drop the restrictions on $L$ from \cite{CKPgmax}.

Let $D(D((S^3-L)\cut B)\cut DR)$ denote the manifold obtained first by cutting along $B$, then doubling, then cutting along $DR$ and doubling.
We will show:

\begin{proposition}\label{Prop:GutsHomeomorphisms}
The manifold $D(\guts(D(M_B)\cut DR))$ is homeomorphic to:
\begin{itemize}
\item The guts of the manifold $D(D((S^3-L)\cut B)\cut DR)$,
\item the guts of the manifold $D(D((S^3-L)\cut R)\cut DB)$, 
\item $D(\guts(D(M_R)\cut DB))$.
\end{itemize}
Moreover, all four manifolds are built by gluing eight copies of a collection of polyhedra, obtained by cutting the checkerboard polyhedron along normal squares and collapsing each normal square boundary to an ideal vertex. 
\end{proposition}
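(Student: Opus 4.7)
The plan is to prove the proposition in three steps: first, a topological identification showing that items (2) and (3) are homeomorphic as manifolds even before taking guts; second, a commutation principle between guts and doubling that relates items (1) to (2) and items (3) to (4); third, extraction of the explicit polyhedral description by tracking the checkerboard decomposition through the operations.

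For the first step, set $X=(S^3-L)\cut B$. Then $D(X)\cut DR$ can be naturally identified with the manifold obtained from $X\cut R$ by doubling along $\widetilde B$, since the doubled surface $DR$ is glued from the two copies of $R$ in $X$ along their common boundary arcs on $\widetilde B$. Doubling once more yields
\[
D\bigl(D(X)\cut DR\bigr)\;\cong\;\text{double of }(S^3-L)\cut(B\cup R)\text{ along both }\widetilde B\text{ and }\widetilde R.
\]
Since $(S^3-L)\cut(B\cup R)$ is the disjoint union of two copies of the checkerboard polyhedron, the right-hand side is the union of $2\cdot 2\cdot 2=8$ such polyhedra glued according to the checkerboard combinatorics. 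This description is manifestly symmetric in $B$ and $R$, so it equals the analogous manifold with the two surfaces swapped. Taking guts therefore gives items (2) and (3) as homeomorphic manifolds.

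For the second step, I would establish the commutation
\[
\guts(D(Y))\;=\;D(\guts(Y))
\]
whenever $Y$ is a pared 3-manifold doubled along an incompressible boundary surface. The key tool is the reflection involution $\sigma$ on $D(Y)$: any essential annulus or torus in $D(Y)$ can be isotoped to be either $\sigma$-invariant (descending to an essential annulus or torus in $Y$) or $\sigma$-antisymmetric (meeting the doubling locus in essential arcs or curves that bound essential annuli on one side). Thus the JSJ decomposition of $D(Y)$ is the double of that of $Y$, and the hyperbolic pieces double to the hyperbolic pieces. Applied to $Y=D(X)\cut DR$, together with the identity $\guts(D(X))=D(M_B)$ and the fact from \cite[Lemma~4.8]{CKPgmax} that $DR$ is essential and transverse to the JSJ of $D(X)$, this gives the equality of items (1) and (2), and symmetrically of (3) and (4).

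For the third step, by Lackenby's analysis together with \cite{CKPgmax}, the $I$-bundle part of the JSJ decomposition of $(S^3-L)\cut B$ corresponds to essential product disks in the checkerboard polyhedron, and such product disks are precisely the normal squares. The guts of $(S^3-L)\cut B$ is therefore obtained from the checkerboard polyhedron by cutting along each normal square and collapsing its boundary square to an ideal vertex; tracking this through the two cut-and-double operations yields the eight-copy polyhedral description. The principal obstacle is the commutation lemma: one must classify essential surfaces in the double under the reflection involution and rule out ``hidden'' essential annuli or tori that are neither invariant nor antisymmetric up to isotopy, and verify that this classification respects the parabolic locus, which in our setting contains both cusp tori from $\bdy N(L)$ and annuli arising from the essential annuli in the JSJ decomposition. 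This requires a PL-minimal-surface or normal-surface-theory argument in the pared hyperbolic category, together with careful bookkeeping of how the $I$-bundle pieces in $M_B$ relate to those appearing after the second cut along $DR$.
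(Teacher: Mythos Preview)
Your Step~1 is essentially the paper's first move: both $M_{BR}$ and $M_{RB}$ are built from eight copies of the checkerboard polyhedron, hence homeomorphic, and uniqueness of JSJ then identifies their guts. Good.

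The gap is in Step~2. The commutation $\guts(D(Y))=D(\guts(Y))$ for doubling along incompressible boundary is reasonable (equivariant JSJ via the reflection). But that alone does not give item~(1)~$\cong$~item~(2). Writing $X=(S^3-L)\cut B$, your commutation yields
\[
\guts(M_{BR})\;=\;D\bigl(\guts(D(X)\cut DR)\bigr),
\]
whereas the target is $D\bigl(\guts(D(M_B)\cut DR)\bigr)$ with $M_B=\guts(X)$. So you need
\[
\guts\bigl(D(X)\cut DR\bigr)\;\cong\;\guts\bigl(\guts(D(X))\cut DR\bigr),
\]
i.e.\ that taking guts commutes with \emph{cutting along the essential surface $DR$}, not just with doubling. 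Your appeal to ``$DR$ transverse to the JSJ of $D(X)$'' does not establish this: the discarded $I$-bundle and Seifert fibered pieces of $D(X)$ can, in principle, produce new hyperbolic pieces after cutting along $DR$ (an $I$-bundle cut along a horizontal surface need not remain an $I$-bundle in the pared sense), and conversely new essential annuli can appear in what were hyperbolic pieces. You would have to analyse exactly how $DR$ meets each JSJ piece of $D(X)$ and show the non-guts part contributes nothing to the guts after cutting. That analysis is precisely what the paper carries out, but concretely rather than abstractly.

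The paper's route is different from yours: instead of a general commutation principle, it classifies all essential annuli explicitly as normal squares in the checkerboard polyhedra (Lemmas~\ref{Lem:EssentialAnnuliStep1} and~\ref{Lem:EssentialAnnuliStep2}), identifies which resulting pieces are Seifert fibered or $I$-bundles as exactly those with $(2,q)$-torus link combinatorics (Lemmas~\ref{Lem:IBundlesBigonStrings} and~\ref{Lem:LargerManifold}), and then checks by hand that the two orders of cutting and taking guts discard the same polyhedral pieces. The fused-unit case even requires choosing different square collections in the two checkerboard polyhedra and arguing the discrepancy washes out. Your abstract approach would be cleaner if it worked, but the missing guts-commutes-with-cut step is where the actual content of the paper's Lemmas~\ref{Lem:EssentialAnnuliStep1}--\ref{Lem:LargerManifold} lives. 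Step~3 is also too vague: not every normal square is an essential product disk for $(S^3-L)\cut B$ (only those bounding red bigons or cycles of fused units are, per Lemma~\ref{Lem:EssentialAnnuliStep1}); the remaining nontrivial squares only become relevant at the second stage.
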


A \emph{normal square} is a disk properly embedded in the checkerboard polyhedron that meets exactly four faces of the polyhedron in \emph{normal form} --- basically transversely and without backtracking; see for example~\cite{Lackenby}, or~\cite[Definition~3.15]{fkp:gutsjp} for a precise definition of normal form, or \reflem{AnnulusSquares} below for a description of normal squares in a checkerboard polyhedron. Note that we will use the term \emph{square} below to refer to (normal) squares in a polyhedron as well as to corresponding curves in a diagram, i.e.\ curves that meet the diagram exactly four times, using the correspondance between the diagram and the checkerboard polyhedra noted above. 

\begin{definition}[Guts polyhedra]\label{Def:GutsPolyhedra}
  Let $L$ be a link with a reduced, prime, alternating diagram. The \emph{guts polyhedra} associated to the diagram are the ideal polyhedra obtained from \refprop{GutsHomeomorphisms} by taking one of the eight copies of polyhedra making up the guts of
  \[ D(D((S^3-L)\cut B)\cut DR). \]
  Alternatively, it is obtained by cutting the checkerboard polyhedron along normal squares required for that proposition, and discarding Seifert fibered or $I$-bundle components. 
\end{definition}

Our main interest in studying the guts polyhedra is the following result:

\begin{theorem}\label{Thm:RightAngledGuts}
The unique hyperbolic structure on $D(\guts(D(M_B)\cut DR))$ induces a hyperbolic structure on the guts polyhedra in which the red and blue faces are totally geodesic, with red faces meeting blue at right angles. 
\end{theorem}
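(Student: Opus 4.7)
The plan is to combine Thurston's uniformization theorem for pared acylindrical 3-manifolds (cited in the excerpt via \cite{Morgan}) with a symmetry analysis of the two doublings.

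First I would note that by construction $G := \guts(D(M_B) \cut DR)$ is a pared acylindrical 3-manifold whose surface boundary $\bdy_0 G$ sits inside the doubled red surface $\widetilde{DR}$. Thurston's theorem then supplies a unique finite-volume hyperbolic metric on $G$ in which $\bdy_0 G$ is totally geodesic, and doubling yields a finite-volume hyperbolic structure on $D(G) := D(\guts(D(M_B) \cut DR))$ in which the doubled boundary --- the red faces of the guts polyhedra --- is totally geodesic. This handles the red side of the theorem.

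Second, to see that the blue faces are totally geodesic, I would exploit the symmetry of the inner doubling. Applying Thurston's theorem to $M_B$ itself, the blue surface $\bdy_0 M_B$ is totally geodesic in $D(M_B)$, and the reflection $\sigma_B$ across it is an isometry of $D(M_B)$. Because $DR$ is obtained by doubling $R \cap M_B$ across $\bdy_0 M_B$, it is $\sigma_B$-invariant, so $\sigma_B$ descends to $D(M_B) \cut DR$, preserves its guts $G$ (the guts being intrinsic to the topology), and extends to an involution of $D(G)$. By Mostow--Prasad rigidity this involution is realized by an isometry of the hyperbolic structure on $D(G)$; its fixed-point set --- the blue surface --- is therefore totally geodesic. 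The outer-doubling reflection $\sigma_R$ fixing the red surface is an isometry by construction.

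Third, I would establish the right-angle condition by a local count at a blue--red edge $e$ of a guts polyhedron $P_0$. Since $D(G)$ is assembled from eight copies of the guts polyhedra (\refprop{GutsHomeomorphisms}) and the two reflections generate a Klein four-group $\langle \sigma_B, \sigma_R\rangle$ permuting these copies, the cycle of polyhedra around the identified edge $e$ in $D(G)$ consists of exactly the four polyhedra in the orbit $\{P_0,\,\sigma_B P_0,\,\sigma_R P_0,\,\sigma_B \sigma_R P_0\}$. Smoothness of the hyperbolic structure at interior points of $e$ forces the four dihedral angles at $e$ to sum to $2\pi$; the four polyhedra are pairwise isometric via the reflections, so each dihedral angle equals $\pi/2$.

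The main obstacle is the verification that $\sigma_B$ truly preserves $DR$, descends to an involution of the guts, and commutes with $\sigma_R$ on $D(G)$ --- in other words, that the two doubling symmetries live simultaneously and give a Klein four-group of isometries. This should follow from the canonical nature of the JSJ decomposition together with the enumeration in \refprop{GutsHomeomorphisms}, but carefully tracking the parabolic locus through both doublings in order to identify the correct edge cycles is where most of the bookkeeping lies.
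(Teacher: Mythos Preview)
Your argument is correct, but it diverges from the paper's in two places. For the geodesicity of the blue surface, the paper does not track $\sigma_B$ through the construction; instead it invokes \reflem{GutsHomeomorphisms}, which gives a homeomorphism $D(\guts(D(M_B)\cut DR)) \cong D(\guts(D(M_R)\cut DB))$. In the latter manifold the blue surface is totally geodesic for the same reason the red surface is in the former (it is the outer doubling locus), and Mostow--Prasad transports this back. This sidesteps exactly the bookkeeping you flag as the main obstacle: you never need to check that $\sigma_B$ survives passage to the guts or commutes with $\sigma_R$, because the red/blue symmetry is established globally rather than by chasing an involution. Your direct approach works too, and is more self-contained, but it trades the single citation of \reflem{GutsHomeomorphisms} for the JSJ-naturality and parabolic-locus verifications you mention.

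For the right angle, the paper argues that each reflection preserves the other colour's surface setwise, and a geodesic plane invariant under reflection in a second geodesic plane must meet it orthogonally. Your Klein four-group edge-cycle count is an equally valid alternative; it is more combinatorial and makes explicit use of the eight-copy decomposition from \refprop{GutsHomeomorphisms}, whereas the paper's version is purely a local statement about isometries and needs only that both surfaces are already known to be totally geodesic.
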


The rest of this section gives the proofs of \refprop{GutsHomeomorphisms} and \refthm{RightAngledGuts}.

We will prove a sequence of results that will help us understand the form of the guts polyhedra, and how to identify them in a given link complement. First, we present an example. 

\begin{example}[Borromean rings]
  When $L$ is the standard reduced alternating diagram of the Borromean rings, then Thurston showed that the checkerboard polyhedron associated with $S^3-L$ is a regular ideal octahedron~\cite{tnotes}. Checkerboard color the faces blue and red. In this case, $(S^3-L)\cut B$ is obtained by cutting two copies of the regular ideal octahedron along blue faces. The result is hyperbolic with geodesic boundary, and so its guts, $M_B$ is built of two copies of the regular ideal octahedron. When we double, $D(M_B)$ is made of four copies of the regular ideal octahedron. Now cut along $DR$. This cuts along red faces, but again the manifold, built of four regular ideal octahedra, is hyperbolic with geodesic boundary. Hence its guts is a manifold made up of the four octahedra, and its double is made up of eight octahedra. Tracing back through the definition, in this case the guts polyhedra is the single regular ideal octahedron. It agrees with the checkerboard polyhedron.
\end{example}

In order to identify guts polyhedra, we need to identify the guts of manifolds obtained from the original link complement. In order to identify guts, we need to identify tori and annuli of the JSJ decomposition of the cut manifold. Because we are assuming we begin with a hyperbolic link, in fact there will be no essential tori, and we need only to idenfity essential annuli. The following lemma gives the relationship of the essential annuli to the normal squares of \refprop{GutsHomeomorphisms}.

\begin{lemma}\label{Lem:AnnulusSquares}
Let $M$ be an irreducible, boundary irreducible 3-manifold, $S$ an essential surface properly embedded in $M$, such that $M\cut S$ can be decomposed into a finite number of 4-valent checkerboard polyhedra with red and blue faces, where blue faces map to $\widetilde{S}$. Let $A$ be an essential annulus in $M\cut S$ with boundary components $\bdy A \subset \widetilde{S}$. Then $A$ can be isotoped into normal form with respect to the checkerboard polyhedra; i.e., 
  \begin{itemize}
  \item $A$ meets the polyhedra in disks.
  \item Each such disk is a square: it has exactly four sides running through four faces, with opposite sides in faces of the same colour, and it meets four edges and no vertices of the polyhedron.
  \item Each side of a square is an arc in a face of the polyhedron with endpoints on distinct edges.
  \end{itemize}
\end{lemma}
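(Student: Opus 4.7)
The plan is to adapt standard normal surface theory to the special combinatorics of $4$-valent checkerboard polyhedra. Using the irreducibility of $M$, the essentiality of $A$, and isotopies that minimize $|A\cap(\text{2-skeleton})|$, first isotope $A$ into normal form with respect to the polyhedral decomposition. Standard innermost-disk arguments dispose of closed-curve intersections of $A$ with interiors of faces, and outermost-arc arguments eliminate inessential arc intersections; together with irreducibility of $M$, this forces each component of $A\cap P$ to be a disk, giving (i). For (iii), suppose some arc of $A\cap f$ has both endpoints on the same polyhedron edge $e$. Together with a subarc of $e$ it bounds a subdisk of $f$; an innermost such subdisk either lets us isotope $A$ across $e$ (contradicting minimality of $|A\cap(\text{2-skeleton})|$) or provides a boundary compression (contradicting $\bdy$-incompressibility of $A$). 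Hence every side has endpoints on distinct edges.

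For (ii), the essential ingredient is the $4$-valent checkerboard structure. At each polyhedron vertex four faces meet and alternate in colour, so the two faces adjacent to any polyhedron edge are of opposite colour. Traversing $\bdy D$ for a normal disk $D$, at every corner we cross an edge from one face into the adjacent face of opposite colour; hence the sides of $D$ alternate red and blue, so $c_D$ is even and $D$ has equal numbers of red and blue sides. A bigon is excluded by essentiality together with minimality: it would either give an isotopy across an edge reducing $|A\cap(\text{2-skeleton})|$ or a (boundary-) compressing disk for $A$. So $c_D\geq 4$.

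To conclude $c_D=4$ for every disk, I would double-count in the cell decomposition of $A$ induced by the normal disks. Each side of $D$ is either an arc of $A\cap(\text{red face})$, which is interior to $A$ and shared by two disks, or an arc of $\bdy A$ in a blue face, which is a boundary edge of the cell structure and belongs to one disk. Writing $E_{\mathrm{int}}$, $E_{\mathrm{bdy}}$, $F$ for the numbers of interior edges, boundary edges, and disks, the total count of red sides equals $2E_{\mathrm{int}}$ and the total count of blue sides equals $E_{\mathrm{bdy}}$. The alternation from Step 2 forces equal colour counts per disk, so $2E_{\mathrm{int}}=E_{\mathrm{bdy}}$ and $\sum_D c_D=2E_{\mathrm{bdy}}$. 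A local analysis shows every vertex of the cell structure has exactly two disk-corners meeting there, so $\sum_D c_D=2V$. Combined with $\chi(A)=V-E+F=0$ this gives $F=E_{\mathrm{bdy}}/2$, hence $\sum_D c_D=4F$; since each $c_D\geq 4$, equality forces $c_D=4$ for every $D$. The red/blue alternation then says opposite sides of each square have the same colour, completing (ii).

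The main obstacle is the local analysis at boundary vertices of the cell structure, i.e.\ points where $\bdy A$ crosses a polyhedron edge. At such a point one must verify that, in generic position, $A$ also meets a red face in an arc emanating from the vertex, so that the local valence is two rather than one, and similarly check that at each interior vertex only one arc of $A$ in a face passes through. This requires care because $A$ lies on only one side of $\widetilde S$ in $M\cut S$, and the count depends on exactly how many polyhedra meet along the edge. Once that bookkeeping is in place, the Euler-characteristic computation above pins the side count of every normal disk to four.
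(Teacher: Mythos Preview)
Your approach is correct and matches the paper's, which simply cites the literature for normal form and then invokes ``an Euler characteristic argument'' for the square conclusion; you have spelled out that argument in detail. Your stated concern about the local vertex count resolves once you note that every polyhedral edge borders a blue face and hence lies on $\widetilde S=\bdy(M\cut S)$, so all vertices of the induced cell structure on $A$ lie on $\bdy A$, and since exactly two polyhedra are glued along each red face, each such vertex has precisely two blue boundary arcs and one interior red arc---hence two disk-corners---meeting there.
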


\begin{proof}
The fact that such an essential surface can be put into normal form is well-known; for example it is noted in \cite{Lackenby, fkp:gutsjp}. By an Euler characteristic argument, such an annulus decomposes into normal squares. 
\end{proof}

\begin{lemma}\label{Lem:GutsPolyhedra}
  For $L$ a link with a reduced, prime, alternating diagram, and notation as above, both the manifolds $D(M_B)$ and $D(\guts(D(M_B)\cut DR))$ have a decomposition into a finite collection of 4-valent ideal polyhedra that admit a checkerboard colouring, red and blue. Each polyhedron in the collection can be identified with a subset of the checkerboard polyhedron of the diagram of $L$, obtained by cutting the checkerboard polyhedron along a normal square, and then collapsing the normal square to a new ideal vertex. Each red (blue) face is a subset of a red (blue) face of the checkerboard polyhedron.
Finally, $D(\guts(D(M_B)\cut DR))$ is made up of eight copies of a finite collection of such polyhedra. 
\end{lemma}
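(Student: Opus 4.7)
The plan is to follow the two rounds of cut, take guts, and double that define $D(\guts(D(M_B)\cut DR))$, tracking the polyhedral decomposition at each step. Start with Menasco's decomposition: $(S^3-L)\cut B$ consists of two copies of the checkerboard polyhedron $P$ glued along their red faces, with the blue faces forming the cut surface $\widetilde{B}$. Since $L$ is hyperbolic, the JSJ decomposition of $(S^3-L)\cut B$ uses only essential annuli, and by \reflem{AnnulusSquares} each such annulus meets each copy of $P$ in a disjoint union of normal squares. The guts $M_B$ is obtained by cutting along these annuli and discarding all $I$-bundle and Seifert-fibered components.

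The key combinatorial step is to verify that cutting $P$ along a normal square $Q$ and collapsing each resulting copy of $Q$ to an ideal vertex $v$ produces a 4-valent checkerboard ideal polyhedron. The square $Q$ meets four faces $F_1,F_2,F_3,F_4$ of $P$ in arcs $a_1,\dots,a_4$, with consecutive arcs meeting at corners lying on four distinct edges of $P$ (since $Q$ is normal and so avoids ideal vertices). Collapsing $\bdy Q$ to a point $v$ identifies these four corners, so the four edges of $P$ through them become four edges meeting at $v$, making $v$ a 4-valent ideal vertex. Since consecutive faces $F_i,F_{i+1}$ share the edge of $P$ carrying the corner between $a_i$ and $a_{i+1}$, they have opposite checkerboard colors; the four face pieces adjacent to $v$ therefore alternate red-blue-red-blue, and the checkerboard coloring extends. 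All other vertices of the cut piece were already 4-valent and checkerboard-colored, so the whole piece inherits this structure. Iterating over a disjoint family of normal squares yields a collection of 4-valent checkerboard ideal sub-polyhedra of $P$. Thus $M_B$ is assembled from such sub-polyhedra drawn from the two copies of $P$, and doubling along $\bdy_0 M_B$ (pieces of blue faces) duplicates each, so $D(M_B)$ is built from at most four copies of each.

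The second round is formally identical. Apply \reflem{AnnulusSquares} to the polyhedral decomposition of $D(M_B)$: essential annuli in $D(M_B)\cut DR$ break up into normal squares in this decomposition, further subdividing each polyhedron. Cutting along these new squares and collapsing preserves the 4-valent checkerboard structure by the same argument, and taking $\guts(D(M_B)\cut DR)$ amounts to keeping only the hyperbolic pieces. A final doubling along $\bdy_0\guts(D(M_B)\cut DR)$ (now pieces of red faces) multiplies the polyhedron count by another factor of two, yielding a total of $2\cdot 4 = 8$ copies of each surviving polyhedron, as claimed. The main obstacle is the bookkeeping across the two rounds: one must verify that each doubling preserves the polyhedral decomposition (which face pieces become interior versus boundary), and that normal squares appearing in the second round restrict to subdivisions of $P$ compatible with those used to form $M_B$. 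Both follow from the observation that after every step the manifold remains assembled from sub-polyhedra of $P$ glued along red or blue face pieces, so \reflem{AnnulusSquares} applies uniformly to each stage without any reference to the earlier cuts.
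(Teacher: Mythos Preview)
Your proof is correct and follows essentially the same approach as the paper: two applications of \reflem{AnnulusSquares}, one for each round of cut--guts--double, with the normal squares becoming ideal vertices of the new parabolic locus. The paper's proof is terser, simply asserting that cutting along squares and collapsing yields 4-valent checkerboard polyhedra; your explicit verification that the new ideal vertex is 4-valent with alternating face colours fills in a detail the paper leaves to the reader.
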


\begin{proof}
We apply \reflem{AnnulusSquares} twice. If $A$ is an essential annulus in $(S^3-L)\cut B$, then $A$ meets the checkerboard polyhedra of $S^3-L$ in a collection of normal squares, by that lemma. Thus when we cut along $A$, we split the checkerboard polyhedra into new polyhedra, each a subset of the checkerboard polyhedra. Because $A$ becomes part of the parabolic locus in $M_B$, we then collapse the squares that came from $A$ into ideal vertices. Thus after the first step of taking the guts, we have split checkerboard polyhedra into new ideal polyhedra with a red--blue checkerboard colouring as described. Discard polyhedra that give $I$-bundle or Seifert fibered pieces, and double the result along the blue faces. This doubles the number of polyhedra, and gives the manifold $D(M_B)$. So $D(M_B)$ is built of 4-valent polyhedra that satisfy the conclusions of the first part of the lemma. The surface $DR$ is the image of the red faces of these polyhedra under the gluing.

For the second step, we cut along $DR\cap D(M_B)$, which is equivalent to cutting the 4-valent polyhedra of $D(M_B)$ along red faces. Then we find essential annuli in $D(M_B)\cut DR$. Any such annulus can be put into normal form with respect to the polyhedra for $D(M_B)$, and by \reflem{AnnulusSquares} it meets the polyhedra of $D(M_B)$ in squares. Again these cut the polyhedra into new checkerboard coloured polyhedra, and after collapsing the squares to ideal vertices we have 4-valent ideal polyhedra as claimed.

For the final statement, we count the number of copies of polyhedra. Cutting along normal squares to obtain $M_B$ and then doubling gives four copies of polyhedra obtained by cutting the original checkerboard polyhedron along normal squares. Cutting these along normal squares and then doubling gives eight copies of new polyhedra obtained from the original by cutting along normal squares.
\end{proof}

\subsection{Identifying the guts}

In the process of proving the rest of \refprop{GutsHomeomorphisms} and \refthm{RightAngledGuts}, we will explicitly determine the guts from a reduced, prime, alternating diagram of the link $L$. This will allow us to explicitly find the guts polyhedra. In particular, not every polyhedron obtained by cuting the checkerboard polyhedron along a normal square will be part of the guts. First, not every normal square is necessarily a square in the decomposition of an essential annulus into normal form as in \reflem{AnnulusSquares}. Second, some polyhedra resulting may be part of the $I$-bundle or Seifert fibered components of the cut manifold, not the guts.

The next lemma addresses the first point for the manifold $(S^3-L)\cut B$. We will call a normal square a {\em nontrivial square} if it bounds more than one vertex on each side.

\begin{lemma}\label{Lem:EssentialAnnuliStep1}
Let $L$ be a link with a reduced, prime, alternating diagram. 
The following nontrivial squares in the checkerboard polyhedra of $S^3-L$ give essential annuli in the manifold $(S^3-L)\cut B$:
\begin{enumerate}
\item A square bounding a string of red bigons, or
\item A cycle of fused units, i.e.\ a string of squares, each bounding a fused unit, as in \reffig{FusedUnit}.
\end{enumerate}
Conversely, any essential annulus for $(S^3-L)\cut B$ in normal form with respect to the checkerboard polyhedra decomposes into squares with one of the above two forms. Similarly for $(S^3-L)\cut R$, with red and blue surfaces swapped.
\end{lemma}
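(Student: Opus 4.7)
The plan is to prove the forward and converse directions separately, using that $(S^3-L)\cut B$ is built from two copies of the checkerboard polyhedron identified along their red faces, with blue faces forming the boundary $\widetilde{B}$. A normal square in a single polyhedron corresponds to a simple closed curve in the diagram meeting $L$ transversely in four strands, with arcs alternating between red and blue faces. Any properly embedded annulus in $(S^3-L)\cut B$ in normal form is a cyclic chain of normal squares alternating between the top and bottom polyhedron, glued along shared red-face arcs.

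For the forward direction, we start from each listed configuration and explicitly assemble an annulus. If $\gamma$ is a single nontrivial square bounding a string of red bigons, its top and bottom lifts are two square disks glued along both red-face arcs of $\gamma$, forming an annulus whose boundary is the four blue-face arcs on $\widetilde{B}$. If $\gamma_1,\dots,\gamma_k$ is a cycle of fused-unit squares, consecutive $\gamma_i,\gamma_{i+1}$ share a common red arc, and alternating top and bottom lifts around the cycle produces an annulus built from these shared gluings. Incompressibility of the resulting annulus follows from primeness: a compressing disk would project to a decomposing sphere meeting $L$ in at most two strands, which a reduced prime alternating diagram does not admit. Non-boundary-parallelism follows from nontriviality: a boundary-parallel annulus would cobound an $I$-bundle forcing the interior of some square to enclose only a single vertex, which is ruled out by hypothesis.

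For the converse, let $A$ be an essential annulus in normal form. By \reflem{AnnulusSquares}, $A$ decomposes into a cyclic chain of normal squares, which in the diagram gives a cyclic sequence of four-strand curves whose consecutive members share two strand endpoints through a common red face. The combinatorial core of the proof is a classification of which such cyclic configurations are compatible with essentiality. When the cycle has length one, primeness forces the interior of the single curve $\gamma$ to contain only crossings arranged in a red-bigon chain, since any other red face in the interior would yield a compressing disk or a simplifying isotopy that reduces the square count. When the cycle has length at least two, the shared-red-face condition together with incompressibility constrains each enclosed region between consecutive curves to be a fused unit, and closing up the cycle forces the entire chain to consist of fused-unit squares. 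The main obstacle is this local combinatorial enumeration, which parallels and extends Lackenby's analysis of essential surfaces in prime alternating link complements; once complete, the statement for $(S^3-L)\cut R$ follows by symmetrically exchanging red and blue.
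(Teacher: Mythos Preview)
Your setup is correct: $(S^3-L)\cut B$ is two checkerboard polyhedra glued along red faces, and a normal annulus is a cyclic chain of squares alternating between the two copies. But your forward constructions do not account for the \emph{gearing rotation} in the Menasco gluing: a red face with $k$ sides is glued to its partner by a $2\pi/k$ rotation, not by the identity.

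In case~(1), you claim the top and bottom copies of a single square $\gamma$ glue along ``both red-face arcs of $\gamma$'' to close up into an annulus. This fails in general. The red arcs of $\gamma$ lie in the red faces at the two ends of the bigon string, and these faces are typically not bigons; the one-click rotation carries each red arc of $\gamma_+$ to an arc in the bottom face that is \emph{not} the red arc of $\gamma_-$. So your two squares do not close up. The paper sidesteps this by exhibiting the product structure: each red bigon is an $I$-bundle $I\times(\text{arc of }\widetilde{B})$ matching the product structure on the parabolic locus, so the union of red bigons and parabolic locus is an $I$-bundle whose vertical boundary is the essential annulus. In normal form this annulus consists of the nontrivial square \emph{together with} additional trivial vertex-parallel squares that absorb the rotation; it is not just two copies of $\gamma$.

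In case~(2), your claim that ``consecutive $\gamma_i,\gamma_{i+1}$ share a common red arc'' is not what happens. The mechanism (see the paper's Figure~1) is again the gearing: the dashed squares in the top polyhedron are carried by the red-face rotation to a \emph{different} collection of dashed squares in the bottom polyhedron, and it is this pair of collections that glues into an annulus cutting off a Seifert fibered solid torus. Essentiality then follows because the annulus bounds a nontrivial Seifert fibered piece, not from your compressing-disk argument (which, as written, does not explain how a compressing disk yields a decomposing sphere).

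For the converse, both you and the paper ultimately defer to Lackenby's analysis in \cite[Section~5]{Lackenby}; that part is fine, though your sketch of the ``combinatorial enumeration'' is no more than a restatement of the desired conclusion.
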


\begin{figure}
  \includegraphics{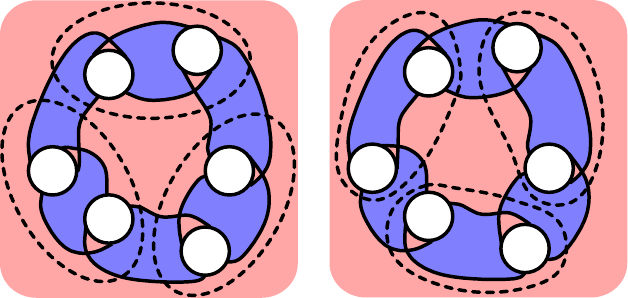}
  \hspace{.5in}
  \includegraphics{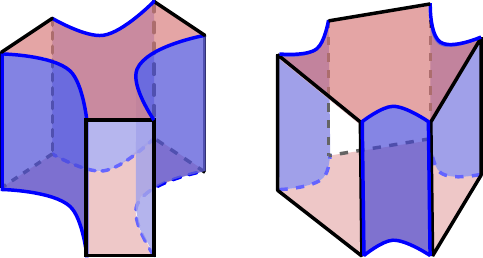}
  \caption{Left: A cycle of three fused units, with dashed squares in each polyhedron. Right: A Seifert fibered solid torus is built by gluing top faces to top faces, bottom to bottom (i.e.\ gluing red faces), with a half-turn on one side.}
  \label{Fig:FusedUnit}
\end{figure}

\begin{proof}
This is essentially due to Lackenby, with the proof contained in Section~5 of \cite{Lackenby}. We step through the argument.

The first case, of a square bounding a string of red bigons, is dealt with in the first two paragraphs of \cite[Section~5]{Lackenby}. Each red bigon is a disc with boundary consiting of four edges: two edges lie on blue faces, and two edges lie on vertices of the polyhedron. Each of these can be given a product structure $I\times(\mbox{arc of }\widetilde{B})$. The parabolic locus $P$ of $(S^3-L)\cut B$ also has a product structure of the form $I\times(\mbox{arc of }\widetilde{B})$, with the product structure of the red bigon matching that of the parabolic locus. Thus a neighbourhood of the union of the parabolic locus and red bigon faces cuts off an $I$-bundle component of $(S^3-L)\cut B$. Its boundary is an essential annulus in $(S^3-L)\cut B$. In normal form, this essential annulus will be made up exactly of squares bounding a string of red bigons (the boundary of a neighbourhood of red bigons) and trivial squares parallel to vertices. Each nontrivial square bounding red bigons appears in an essential annulus. 

In the second case, consider the dashed squares shown in \reffig{FusedUnit}, left, or more generally any number of these in a cycle. The gluing on the checkerboard polyhedra glues these along their red faces to the dashed squares shown second from left in \reffig{FusedUnit}, and the two collections of squares form an annulus. Together, these cut off a Seifert fibered solid torus, as shown on the right of \reffig{FusedUnit}. Thus this is an essential annulus. 

Conversely, in the proof of \cite[Theorem~14]{Lackenby}, Lackenby shows that if there are no red bigons in the diagram, and thus no essential annuli arising from an $I$-bundle determined by bigons as in case (1), then an essential annulus will be made up of fused units as on the left of \reffig{FusedUnit}; see also \cite[Figures~12,~13]{Lackenby}.
\end{proof}

\begin{lemma}\label{Lem:EssentialAnnuliStep2}
Let $L$ be a link with a reduced, prime, alternating diagram and associated checkerboard polyhedra. Then any square in the original checkerboard polyhedra for the link complement $S^3-L$ gives rise to an embedded annulus in $D(M_B)\cut DR$, and this annulus is essential in $D(M_B)\cut DR$ if and only if it is a nontrivial square.

Conversely, any embedded essential annulus in $D(M_B)\cut DR$ is obtained by a sequence of an even number of nontrivial squares in the original checkerboard polyhedra.
\end{lemma}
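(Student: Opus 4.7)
The plan is to handle both directions by tracking how normal squares behave under the successive operations: cut along the step-1 annulus squares of \reflem{EssentialAnnuliStep1}, collapse these to ideal vertices, double across blue faces to form $D(M_B)$, and then cut along $DR$. The key structural input is \reflem{GutsPolyhedra}, which identifies the polyhedra of $D(M_B)$ as pieces of four copies of the original checkerboard polyhedron, glued along blue faces with a mirror involution.

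For the forward direction, given a normal square $Q$ in the original checkerboard polyhedron, I would first isotope $Q$ to be disjoint from the step-1 cutting squares (two normal squares in the same polyhedron can be made disjoint unless they coincide). Then $Q$ descends to a normal square in a polyhedron of $D(M_B)$, with two opposite sides on blue faces and two on red faces. The blue-face doubling identifies the two blue sides of $Q$ with those of its mirror copy, producing an embedded annulus $A_Q \subset D(M_B)$ whose boundary circles lie in $\widetilde{DR}$. After cutting along $DR$, $A_Q$ is a properly embedded annulus in $D(M_B)\cut DR$ with boundary on $\widetilde{DR}$, as required.

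To show that essentiality is equivalent to nontriviality, if $Q$ is trivial (bounds a single ideal vertex on one side) then $A_Q$ co-bounds a solid torus with a neighbourhood of that vertex, so it is boundary-parallel and inessential. Conversely, if $Q$ is nontrivial, a compression of $A_Q$ would lift through the mirror involution to a compressing disk for $Q$ in the polyhedron, producing a closed curve in the link diagram meeting the projection in at most two points with both sides nontrivial, contradicting primeness of the alternating diagram. Boundary-parallelism of $A_Q$ would force one side of $Q$ to be an annular neighbourhood of a single vertex, again contradicting nontriviality.

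For the converse, let $A \subset D(M_B)\cut DR$ be an embedded essential annulus. Applying \reflem{AnnulusSquares} with $M = D(M_B)$ and $S = DR$, we may put $A$ in normal form as a union of normal squares in the polyhedra of $D(M_B)$. Each such square, by \reflem{GutsPolyhedra}, is a subset of a face-structured normal square in the original checkerboard polyhedron. The annulus traverses a cyclic sequence of polyhedra connected across blue and red faces, and every blue-face crossing pairs a square with its mirror image under the doubling; pushing this sequence down to the original checkerboard polyhedron therefore yields squares that come in mirror pairs, giving an even count. Each of these squares must be nontrivial, since the forward direction has already shown that trivial squares produce boundary-parallel annuli which could not contribute to an essential annulus. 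The main obstacle I expect is the nontriviality-implies-essentiality step, which requires carefully using primeness to rule out compressing disks; the converse direction is mostly a bookkeeping exercise, but tracking the mirror pairing across the two successive doublings is where care is needed.
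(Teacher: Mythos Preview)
Your overall architecture matches the paper's: build the annulus $A_Q$ from a square $Q$ by taking $Q$ and its mirror copy under the blue doubling, identify the trivial case as boundary-parallel, and for the converse put an essential annulus into normal form via \reflem{AnnulusSquares} and read off the even count from the identity gluing on blue faces. The converse sketch is essentially what the paper does.

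The gap is in your essentiality argument for nontrivial $Q$. You write that ``a compression of $A_Q$ would lift through the mirror involution to a compressing disk for $Q$ in the polyhedron,'' but $Q$ is already a disk, so this phrase has no clear meaning, and it is not evident how a symmetrised compressing disk intersected with one polyhedron yields a curve meeting the diagram in two points. You also only discuss compressibility and boundary-parallelism, whereas the relevant obstruction for an annulus is \emph{boundary} compressibility. The paper handles this step differently and more concretely: cut the polyhedra along $Q$ and collapse $Q$ to a new ideal vertex, then put a hypothetical boundary-compression disk into normal form with respect to these new polyhedra. An outermost blue intersection arc would have to run from the new ideal vertex to an adjacent edge, violating normality; hence the disk misses all blue faces and lies in a single polyhedron, so its boundary arc on $\widetilde{DR}$ connects the two opposite red sides of $Q$. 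That produces a simple closed curve meeting the diagram in two points with crossings on both sides, contradicting primeness. This is the step you flagged as the ``main obstacle,'' and your mirror-lifting idea does not fill it; the normal-form argument for the boundary-compression disk does.
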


\begin{proof}
We prove the ``conversely'' statement first. Note it is basically contained in the proof of \cite[Lemma~4.10]{CKPgmax}, but we repeat the argument here.  By \reflem{AnnulusSquares}, an essential annulus is made up of squares that meet red and blue faces of the polyhedra of $D(M_B)\cut DR$. Since we glue by the identity on blue faces, the squares must glue together as shown on the left of \reffig{EssentialAnnuliStep2}, which is modified from \cite{CKPgmax}. In that figure, dashed lines indicate squares that lie in one copy of the polyhedron, while straight lines indicate squares in a second copy, glued to the first by the identity map on blue faces. Note the dashed squares and straight squares glue into an annulus. If there are only two squares, and each bounds a region containing a single ideal vertex as on the right of the figure, then each square making up the annulus is parallel to the ideal vertex shown. Note after gluing blue faces by the identity, the single ideal vertex becomes an annulus in the parabolic locus. The two squares parallel to that vertex will be parallel to the annulus, hence not essential. 

\begin{figure}
  \includegraphics{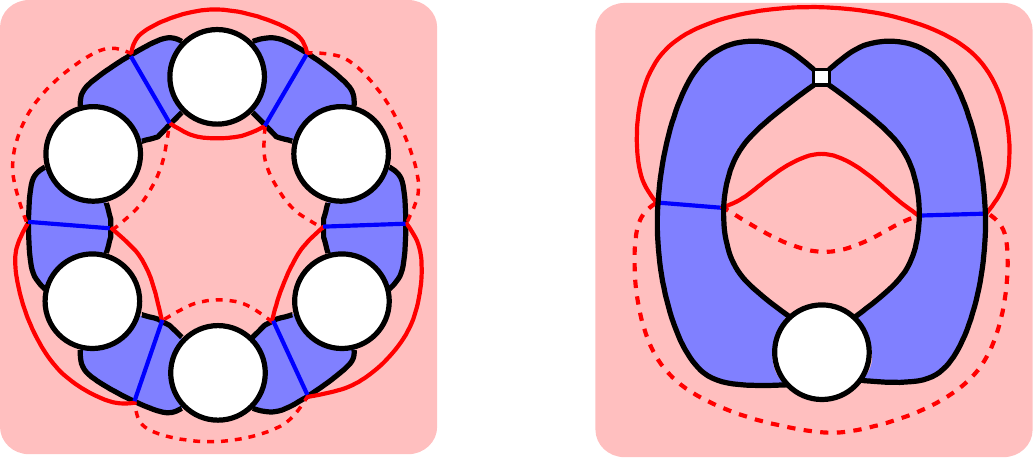}
  \caption{Left: The form of an essential annulus in $D(M_B)\cut DR$. Right: if the square encircles a single vertex the corresponding annulus is inessential.}
  \label{Fig:EssentialAnnuliStep2}
\end{figure}

To prove the first statement, given any square in the polyhedron that encircles more than one ideal vertex on each side, build an annulus in $D(M_B)\cut DR$ by taking one copy of the square in each of the two polyhedra that are glued, and gluing by the identity map on blue sides. Note if the square encircles a single ideal vertex, then the pair of squares glued in $D(M_B)\cut DR$ encircle the annulus in the parabolic locus $P$ that comes from the vertex; in particular by gluing the blue faces adjacent to the vertex by the identity map. This is not an essential annulus. 

Suppose the annulus is boundary compressible. Then a boundary compression disk has one arc of the boundary on the annulus, and another arc on the red surface. We may cut along the squares, shrink them to ideal vertices, and put the boundary compression disk into normal form with respect to these new polyhedra. If the boundary compression disk meets a blue face, then an outermost arc of intersection of the blue face and the compression disk must run from the ideal vertex corresponding to the square to an adjacent edge of the diagram. This contradicts the definition of normal. Thus the boundary compression disk does not meet a blue face, and thus lies in a single polyhedron. But this means a single arc of the boundary of the compression disk connects opposite sides of the square, red to red. This is impossible because the red faces on opposite sides are not connected (because the diagram is prime). Thus any such annulus is boundary incompressible. If not boundary parallel, then it is essential.
\end{proof}

In light of \reflem{EssentialAnnuliStep1} and especially \reflem{EssentialAnnuliStep2}, determining the guts is analogous to examining squares in the polyhedral decomposition of each manifold. Because the polyhedral decomposition comes from cutting the checkerboard polyhedra of an alternating diagram along squares, finding the guts amounts to analysing squares in the diagram graph of the alternating link.

The essential annuli that are most important in determining the guts are those that separate hyperbolic pieces from other hyperbolic pieces, or separate hyperbolic pieces from $I$-bundle or Seifert fibered pieces. By uniqueness of the JSJ-decomposition, these annuli are unique up to isotopy and pairwise disjoint. There may be additional essential annuli embedded in the Seifert fibered and $I$-bundle components of the cut manifolds, and these may not be disjoint from each other. Because we choose to keep the guts only, they do not affect our results. However, we do need to be able to recognise them and discard them.

\begin{lemma}\label{Lem:IBundlesBigonStrings}
Suppose $L$ is link with reduced, prime, alternating diagram and corresponding checkerboard polyhedra. Let $A$ denote a maximal collection of disjoint essential annuli in $D(M_B)\cut DR$. Let $C$ be a component of the polyhedra of $(D(M_B)\cut DR)\cut A$. Then $C$ gives rise to an $I$-bundle or Seifert fibered piece of $(D(M_B)\cut DR)$ if and only if $C$ has the combinatorics of the standard diagram of a $(2,q)$-torus link. 
That is, $C$ is not part of $\guts(D(M_B)\cut DR)$ exactly when the red or blue faces of $C$ form a chain of bigons. 
\end{lemma}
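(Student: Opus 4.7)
The strategy is to prove the two directions separately, building on the explicit descriptions of essential annuli in \reflem{EssentialAnnuliStep1} and \reflem{EssentialAnnuliStep2}. For the forward direction, suppose the red (or blue) faces of $C$ form a chain of bigons, so $C$ has the combinatorial type of a standard $(2,q)$-torus link diagram. I identify $C$ directly with one of the two configurations from \reflem{EssentialAnnuliStep1}: a string of red bigons bounded by two blue faces corresponds, via the doubling construction, to a parabolic neighbourhood of red bigons whose complement in $C$ carries an $I$-bundle structure, while a cyclic chain of bigons corresponds to a fused-unit cycle and produces the Seifert fibered solid torus of \reffig{FusedUnit}. In either case, the boundary annuli of this piece remain essential after cutting along $DR$ by \reflem{EssentialAnnuliStep2} and thus belong to the maximal collection $A$, so $C$ sits in a non-guts piece of the JSJ decomposition.

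For the converse, assume $C$ is not part of the guts, so that $C$ lies in an $I$-bundle or Seifert fibered piece $P$ of $(D(M_B)\cut DR)\cut A$. By \reflem{GutsPolyhedra}, $C$ is a $4$-valent ideal polyhedron with red and blue checkerboard coloring, obtained from the original checkerboard polyhedron by cutting along normal squares. I would show that the fibration structure on $P$, restricted to $C$, forces the combinatorics of a $(2,q)$-torus link diagram. In the $I$-bundle case, an $I$-fiber of $P$ meeting $C$ can be put into normal form with respect to the faces; primeness of the diagram (which rules out identifications of opposite same-coloured faces through an annulus) together with $4$-valence imply that the fibers decompose $C$ into red bigons forming a chain between two blue end faces. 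In the Seifert fibered case, a regular circle fiber of $P$ meeting $C$ yields a cyclic combinatorial pattern through the faces of $C$ which, by the same constraints, must be exactly that of a cycle of fused units.

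The main obstacle is the converse, where one must exclude exotic combinatorial types of $C$ that might hypothetically support an $I$-bundle or Seifert fibration but do not actually arise in this setting. This reduces to a careful case analysis of how interval or circle fibers can meet a $4$-valent checkerboard polyhedron obtained by normal-square cuts, and I expect the argument to closely parallel Lackenby's analysis in \cite{Lackenby}, with the additional doubling step absorbed by \reflem{EssentialAnnuliStep2}. Uniqueness of the JSJ decomposition, combined with the explicit descriptions of essential annuli from the preceding lemmas, then ensures that every non-guts piece is identified with such a twist region and that the forward and converse analyses cover all cases without overlap.
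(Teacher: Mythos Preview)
Your forward direction misidentifies the case split. A polyhedron $C$ with the combinatorics of a $(2,q)$-torus link has one string of bigons and two non-bigon faces; there is no separate ``cyclic chain'' case, and there is no need to link back to the fused-unit configuration of \reflem{EssentialAnnuliStep1}. The paper's dichotomy is instead by \emph{colour}: if the bigons are red, then $C\cong B_1\times I$ with $B_1$ a blue face, and doubling along blue faces yields a Seifert fibered solid torus $B_1\times S^1$; if the bigons are blue, then $C\cong R_1\times I$ with $R_1$ a red face, and doubling along the blue bigons glues $C$ to its mirror preserving the $I$-bundle structure. Your attempt to identify the cases with the two configurations of \reflem{EssentialAnnuliStep1} conflates the pre-guts analysis in $(S^3-L)\cut B$ with the situation in $D(M_B)\cut DR$, and in particular the ``fused-unit cycle $\to$ Seifert fibered'' step does not apply to the polyhedron $C$ itself.

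For the converse, your idea of putting fibres into normal form is workable in spirit, but the paper uses a sharper and more standard tool that you are missing: the red and blue surfaces meeting the $I$-bundle or Seifert fibered piece are incompressible, hence vertical or horizontal. Analysing which colour is vertical forces those faces to be of the form $I\times(\text{arc})$, i.e.\ bigons, and immediately yields the $(2,q)$-torus link combinatorics (blue bigons in the $I$-bundle case, red bigons in the Seifert fibered case). Your proposed case analysis of how fibres cross a $4$-valent checkerboard polyhedron would eventually reach the same conclusion, but without the vertical/horizontal dichotomy it is not clear how you rule out the ``exotic'' types you flag as the main obstacle.
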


By ``maximal'' we mean a collection of disjoint essential annuli that is maximal in the sense that there is no other essential annulus that is pairwise disjoint from those already in the collection. 

\begin{proof}[Proof of \reflem{IBundlesBigonStrings}]
Suppose $C$ has the combinatorics of a $(2,q)$-torus link. Suppose first that the bigon faces of $C$ are all coloured red. Then $C$ can be given the structure of an $I$-bundle: it is homeomorphic to $I\times B_1$ where $B_1$ is one of the two blue faces of $C$, with each red face of the form $I\times(\mbox{arc of }\bdy B_1)$. The parabolic locus is also of the form $I\times(\mbox{arc of }\bdy B_1)$. To form $D(M_B) \cut DR$, double along the two blue faces. The $I$-bundle is doubled along $\bdy I \times B_1$, forming a Seifert fibered solid torus $S^1\times B_1$.

Now suppose that the bigon faces of $C$ are coloured blue. Again $C$ can be given the structure of an $I$-bundle, this time of the form $I\times R_1$, where $R_1$ is one of the two red faces of $C$. To form $D(M_B)\cut DR$, double along the blue bigon faces. This glues the $I$-bundle $C$ to an identical $I$-bundle along blue faces of the form $I\times(\mbox{arc of }\bdy R_1)$, preserving the $I$-bundle structure. Thus $C$ yields a component that is not part of the guts.

To prove the converse statement, suppose that $C$ is an ideal polyhedron obtained from cutting one of the ideal polyhedra making up $D(M_B)\cut DR$ along a square that forms a larger annulus. Suppose $C$ is not part of $\guts(D(M_B)\cut DR)$. Then $C$ belongs to an $I$-bundle or Seifert fibered piece of $D(M_B)\cut DR$. Suppose $C$ is a sub-$I$-bundle of a larger $I$-bundle $Y$. The vertical boundary components of the $I$-bundle $Y$ are vertical annuli with both boundary components on $\widetilde{DR}$ in $D(M_B)\cut DR$, and similarly for $C$. Because the annuli decompose into squares by \reflem{AnnulusSquares}, the fibres must run parallel to an arc of the blue faces. Because the blue faces glue up to form an essential surface, parallel to one fiber, the blue faces must be a vertical surface, and so each blue face of each annulus is fibred. It follows that each blue face must be of the form $I\times\mbox{arc}$, and therefore each blue face is a bigon. Now $C$ is a sub-$I$-bundle of $Y$, homeomorphic to a ball with boundary made up of red faces and blue bigons. It follows that $C$ has the combinatorics of a $(2,q)$-torus link made up of a string of blue bigons.

Finally, suppose $C$ belongs to a Seifert fibered piece $Z$ of $D(M_B)\cut DR$. The Seifert fibering induces a fibering of $\widetilde{DR} \cap Z$. The boundary $\bdy Z$ is a torus, and the vertical boundary of $C$ is an annulus that is broken into squares when put into normal form. It follows that in this case, the red faces of $\widetilde{DR}\cap Z$ are fibered, and hence they form bigons. Then, again, $C$ has the combinatorics of a $(2,q)$-torus link, this time with red bigons. 
\end{proof}

We still wish to identify exactly the guts from a diagram. It becomes slightly easier if we consider the manifold
\[M_{BR} := D(D((S^3-L)\cut B)\cut DR) \]
of \refprop{GutsHomeomorphisms}. 

\begin{lemma}\label{Lem:LargerManifold}
Let $L$ be a link with a reduced, prime, alternating diagram, and associated checkerboard surfaces $B$ and $R$. 
The manifold $M_{BR} = D(D((S^3-L)\cut B)\cut DR)$ has the following properties:
\begin{enumerate}
\item\label{Itm:JSJ} Any maximal disjoint collection of nontrivial squares in the diagram induces a torus decomposition of $M_{BR}$, which contains the tori of the JSJ decomposition. 
\item\label{Itm:NewPolyhedra} Each component of the torus decomposition is built of ideal polyhedra obtained from the original checkerboard polyhedra by cutting along squares.
\item\label{Itm:SF-2qTorus} The Seifert fibered pieces of the decomposition are exactly those obtained by gluing polyhedra with the combinatorics of a $(2,q)$-torus link. All other polyhedra give hyperbolic pieces.
\end{enumerate}
\end{lemma}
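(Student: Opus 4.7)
The plan is to deduce all three parts by combining the essential-annulus classifications in Lemmas~\ref{Lem:EssentialAnnuliStep1} and~\ref{Lem:EssentialAnnuliStep2} with the polyhedral structure of \reflem{GutsPolyhedra} and the $I$-bundle/Seifert fibered characterization of \reflem{IBundlesBigonStrings}. The guiding picture is that a nontrivial square in the diagram bounds a Conway $2$-sphere meeting $L$ in four points, giving a $4$-punctured sphere in $S^3 - L$ that closes up into a torus after the two cut-and-double operations building $M_{BR}$, and that every JSJ torus of $M_{BR}$ arises in this way.

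For part (1), I would first verify that each nontrivial square $s$ in the diagram yields an embedded torus $T_s \subset M_{BR}$: one traces the $4$-punctured sphere $F_s \subset S^3 - L$ through the sequence (cut along $B$, double, cut along $DR$, double), noting that at each cut the punctures pair up into boundary arcs along the cut surface while the subsequent doubling seals these arcs into circles, so after both steps $F_s$ has closed up. A maximal disjoint collection of nontrivial squares therefore produces a disjoint collection of such tori, decomposing $M_{BR}$. Conversely, an essential torus in $M_{BR}$ can be isotoped to be invariant under both doubling involutions (equivariant JSJ), so it descends either to an essential annulus in $D((S^3-L)\cut B)\cut DR$ with boundary on $\widetilde{DR}$, or, if disjoint from $DR$, to an essential annulus in $(S^3-L)\cut B$ with boundary on $\widetilde{B}$. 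Lemmas~\ref{Lem:EssentialAnnuliStep1} and~\ref{Lem:EssentialAnnuliStep2} express both kinds of annulus as unions of nontrivial squares in the original diagram, and maximality of the square collection guarantees that every JSJ torus appears.

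Part (2) follows by applying the argument of \reflem{GutsPolyhedra} at each of the two cut-and-double stages: the normal squares forming the essential annuli split each checkerboard polyhedron into smaller ideal polyhedra that are themselves subsets of the original, with the red--blue coloring inherited, and the eight copies reassemble into the pieces of the torus decomposition. For part (3), I would adapt \reflem{IBundlesBigonStrings}: a component with $(2,q)$-torus link combinatorics carries an $I$-bundle structure on its bigon faces that is preserved under both doublings, yielding a Seifert fibered piece; conversely, a Seifert fibered piece of $M_{BR}$ must fiber compatibly with both $\Z_2$-doubling involutions, which forces one color class of faces in the associated polyhedron to consist of bigons. The remaining pieces must therefore be hyperbolic.

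The main obstacle is part (1): one must verify that every JSJ torus of $M_{BR}$ really descends through the two doubling involutions to an essential annulus already classified by Lemmas~\ref{Lem:EssentialAnnuliStep1} and~\ref{Lem:EssentialAnnuliStep2}, rather than arising only after both doublings as a more exotic object. This requires an equivariant JSJ argument for the $\Z_2 \times \Z_2$ action generated by the two doubling involutions. Once this descent is secured, the earlier lemmas match JSJ tori with nontrivial squares almost directly, and parts (2) and (3) are then largely bookkeeping.
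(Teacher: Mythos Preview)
Your approach is viable but more circuitous than the paper's, especially for part~(1). You propose to descend a JSJ torus of $M_{BR}$ through the two doubling involutions via an equivariant JSJ argument, landing on essential annuli classified by Lemmas~\ref{Lem:EssentialAnnuliStep1} and~\ref{Lem:EssentialAnnuliStep2}. The paper instead works directly in $M_{BR}$: since $M_{BR}$ is assembled from eight copies of the checkerboard polyhedron glued along red and blue faces, any essential torus can be put into normal form with respect to this polyhedral decomposition, and an Euler characteristic count forces every normal piece to be a square in the diagram. Incompressibility and non--boundary-parallelism are then checked directly, as in the proof of \reflem{EssentialAnnuliStep2}. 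This bypasses the equivariant $\Z_2\times\Z_2$ descent entirely and avoids the obstacle you flag. Note also a minor mismatch: \reflem{EssentialAnnuliStep2} concerns $D(M_B)\cut DR$, not $D((S^3-L)\cut B)\cut DR$; the argument does transfer, but your citation is not quite on target.

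For the converse in part~(3), the paper makes your phrase ``fiber compatibly with both $\Z_2$-doubling involutions'' precise by invoking Meeks--Scott \cite{MeeksScott} to conclude that the reflection involutions preserve the Seifert fibering, and then analyzes whether the blue and red surfaces in the fibered piece are vertical or horizontal. One color must be vertical, forcing its faces to be bigons, whence the $(2,q)$-torus link combinatorics. Your sketch is correct in spirit but would need this input to be made rigorous; simply adapting \reflem{IBundlesBigonStrings} is not quite enough, since that lemma works in $D(M_B)\cut DR$ where there are genuine $I$-bundle pieces, whereas in $M_{BR}$ the second doubling has already converted those to Seifert fibered solid tori.
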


\begin{proof}
Note that $M_{BR}$ is obtained by starting with the checkerboard polyhedra of $S^3-L$, doubling along blue faces, then doubling along red faces. Thus it has a decomposition into ideal polyhedra with a checkerboard coloring.

For \refitm{JSJ}, note first that any essential torus can be put into normal form with respect to the polyhedral decomposition. By an Euler characteristic argument, each normal disk making up the torus must be a square. Because the polyhedra have the same combinatorics as the diagram of the alternating link, any collection of essential tori gives a collection of squares.

On the other hand, a nontrivial square in the diagram determines a nontrivial square in the checkerboard polyhedra. When we double across blue and then red faces, the square becomes a torus in $M_{BR}$. We may cut the checkerboard polyhedra along these squares and collapse the squares to ideal vertices, obtaining new ideal polyhedra. If the torus is compressible, a compressing disk can be put into normal form with respect to these new polyhedra. But as in the proof of \reflem{EssentialAnnuliStep2}, this leads to a contradiction if the square is nontrivial. Also as in the proof of \reflem{EssentialAnnuliStep2}, the torus is boundary parallel if and only if the square cuts off a single ideal vertex. Thus a maximal collection of disjoint nontrivial squares in the diagram bounding more than one crossing on each side gives a maximal collection of embedded essential tori in $M_{BR}$. By the uniqueness of the JSJ decomposition, this contains the tori of the JSJ decomposition.

Item~\eqref{Itm:NewPolyhedra} now follows immediately from \refitm{JSJ}. Cut along the squares and collapse each square to an ideal vertex to obtain the new polyhedral decomposition.

For \refitm{SF-2qTorus}, note first that if a polyhedron has the combinatorics of a $(2,q)$-torus link, then it has an $I$-bundle structure of the form $F_1\times I$, where $F_1$ is one of the two faces that is not a bigon. When we double and then double, the $I$-bundle becomes an $S^1$-bundle, which is doubled across annuli on its boundary in a way that preserves the fiber. Hence it is Seifert fibered.

Now suppose a polyhedron $C$ in the complement of the torus decomposition of $M_{BR}$ glues to give a Seifert fibered component $S$. Note first that each ideal vertex of $C$ glues under the doubling to a torus in the parabolic locus, hence the Seifert fibered component $S$ has infinite fundamental group. Note next that the component admits two involutions: reflection through its intersection with the blue surface and reflection through its intersection with the red surface. It follows from work of Meeks and Scott that the involutions preserve the Seifert fibering \cite{MeeksScott}.

Consider the intersections of the blue and red surfaces with $S$. These are incompressible surfaces, hence either vertical or horizontal in $S$. Suppose first that a component of the blue surface is vertical. Then it is parallel to the fibers of the Seifert fibering. Its intersection with $C$ must have corresponding blue faces also parallel to the fibers. Because all faces of $C$ are disks, it follows that these blue faces are of the form $(\mbox{arc})\times I$, or bigon faces. The red meeting the blue cannot also be bigons, else $C$ is formed of four bigons and two ideal vertices, contradicting the fact that the squares in the decomposition were chosen to bound at least two ideal vertices on each side. So the red face must be horizontal in this case. Reflection in the red surface preserves $M_{BR}$, taking blue surfaces to blue. Hence $C$ has the combinatorics of a $(2,q)$-torus link with blue bigon faces.

Suppose instead that each component of the blue surface meeting $C$ is horizontal. Note that the torus boundary components of $S$ are fibered. It follows that the (truncated) ideal vertices of $C$ are fibered of the form $\beta\times I$, where $\beta$ is an arc in the blue face. Then $\bdy\beta\times I$ lies in a red face of $C$. It follows that the red surface must be vertical. Then an argument identical to that above implies that red faces in $C$ are bigons, and again $C$ has the combinatorics of a $(2,q)$-torus link. 
\end{proof}

\begin{lemma}\label{Lem:GutsHomeomorphisms}
  The manifold $D(\guts(D(M_B)\cut DR))$ is homeomorphic to:
  \begin{itemize}
  \item $\guts(M_{BR})$, i.e.\ the hyperbolic part of $M_{BR}$,
  \item $\guts(M_{RB}) = \guts \, D(D((S^3-L)\cut R)\cut DB)$, and
  \item $D(\guts(D(M_R)\cut DB))$.
  \end{itemize}
\end{lemma}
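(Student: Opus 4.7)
The plan is to show that all four manifolds arise as eight copies of the same finite collection of polyhedra---the guts polyhedra---glued along red and blue faces in the same combinatorial pattern; once this common description is established, the three homeomorphisms follow immediately.

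The main step is to identify $D(\guts(D(M_B)\cut DR))$ with $\guts(M_{BR})$. By \reflem{GutsPolyhedra}, the left-hand side is built from eight copies of polyhedra obtained from the original checkerboard polyhedron by cutting along a family of normal squares and collapsing each square to an ideal vertex; the relevant squares arise from two rounds of essential annuli, characterized by \reflem{EssentialAnnuliStep1} (squares around red bigons or cycles of fused units, used in forming $M_B$) and by \reflem{EssentialAnnuliStep2} (nontrivial squares in the second round). On the other hand, \reflem{LargerManifold} says $\guts(M_{BR})$ is obtained by cutting the doubled-doubled checkerboard polyhedra along tori corresponding to a maximal collection of nontrivial squares in the diagram and then discarding Seifert fibered pieces, which by item \refitm{SF-2qTorus} of \reflem{LargerManifold} (together with \reflem{IBundlesBigonStrings}) are precisely the components with $(2,q)$-torus link combinatorics. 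I would verify that a nontrivial square appears as a cutting square in the two-step construction if and only if the resulting piece survives the Seifert fibered discard in $M_{BR}$.

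The key match-up is as follows: the pieces thrown away during the two-step construction are $I$-bundles coming from red bigons and Seifert fibered solid tori from cycles of fused units; after doubling twice, each such piece becomes a Seifert fibered region attached along $(2,q)$-torus link polyhedra (with blue bigon faces in the first case, red in the second), which is exactly the Seifert fibered form produced by \reflem{LargerManifold}. Conversely, any $(2,q)$-torus polyhedron appearing in the decomposition of $M_{BR}$ arises from one of these two bigon configurations in the diagram. The symmetric identification $D(\guts(D(M_R)\cut DB)) \cong \guts(M_{RB})$ then follows by swapping $B$ and $R$. Finally, $\guts(M_{BR}) \cong \guts(M_{RB})$ because both manifolds are assembled from four copies of the checkerboard polyhedra with the full $\mathbb{Z}/2\times\mathbb{Z}/2$ symmetry induced by the two doublings, so there is a natural homeomorphism $M_{BR}\to M_{RB}$ that preserves the polyhedral decomposition, and uniqueness of the JSJ decomposition forces it to carry guts to guts.

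The main obstacle will be the careful verification above: matching the annuli discarded during the two-step guts construction with the Seifert fibered components removed from $M_{BR}$, tracking how red bigons become blue bigons after doubling along $B$ and how cycles of fused units remain intrinsically $(2,q)$-torus. This is largely bookkeeping, but one must also confirm that \emph{no} additional essential annuli appear after doubling---this is guaranteed because \reflem{EssentialAnnuliStep2} already characterizes all essential annuli in $D(M_B)\cut DR$, and the second round of cutting captures exactly the remaining nontrivial squares not already accounted for in the first round.
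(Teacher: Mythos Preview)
Your approach is essentially the paper's: identify all four manifolds as eight copies of the same polyhedra via Lemmas~\ref{Lem:EssentialAnnuliStep1}, \ref{Lem:EssentialAnnuliStep2}, \ref{Lem:IBundlesBigonStrings}, and \ref{Lem:LargerManifold}, then invoke uniqueness of the JSJ decomposition for $\guts(M_{BR})\cong\guts(M_{RB})$.

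There is one subtlety you gloss over that the paper treats explicitly. In the presence of a cycle of fused units, the normal squares used in the first step (\reflem{EssentialAnnuliStep1}) are \emph{not} all disjoint: the squares in the two checkerboard polyhedra of $S^3-L$ intersect each other (compare the two dashed families in \reffig{FusedUnit}). Hence the union of the two rounds of squares is not literally a maximal disjoint collection in the diagram, and you cannot simply say that the two-step cutting and the one-step cutting of \reflem{LargerManifold} use the same squares. The paper resolves this by observing that, whichever disjoint family one completes to, the exterior of the cycle of fused units has $(2,q)$-torus link combinatorics and is discarded, while inside each fused unit the second round of squares further decomposes it into two ``units'' plus a $(2,3)$-torus link piece; only the units survive in either construction. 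You should make this case distinction explicit rather than fold it into ``bookkeeping.'' Also, a minor point: $M_{BR}$ and $M_{RB}$ are built from \emph{eight} copies of the checkerboard polyhedron, not four.
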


\begin{proof}
We show first that $\guts(M_{BR})$, the manifold consisting of the hyperbolic components of $M_{BR}$ under the JSJ decomposition, is homeomorphic to $\guts(M_{RB})$. This is straightforward: the manifolds $M_{BR}$ and $M_{RB}$ are both obtained from eight copies of one of the checkerboard polyhedra of the link $L$, by doubling along red and blue faces, although in different orders. We obtain a homeomorphism $M_{BR}\cong M_{RB}$ by taking a homeomorphism of polyhedra, and then gluing by the identity across faces. Then $\guts(M_{BR})\cong \guts(M_{RB})$ by the uniqueness of the JSJ decomposition.

Next we show $D(\guts (D(M_B)\cut DR))$ is homeomorphic to $\guts(M_{BR})$. The argument that $D(\guts(D(M_R)\cut DB))$ is homeomorphic to $\guts(M_{RB})$ is symmetric. By \reflem{LargerManifold}, we obtain $\guts(M_{BR})$ by taking a maximal collection of disjointly embedded nontrivial squares, which bound at least two crossings on both sides, cutting along them, and discarding components with the combinatorics of a $(2,q)$-torus link. Then double remaining polyhedra across their red and blue faces. By \reflem{EssentialAnnuliStep1} and \reflem{EssentialAnnuliStep2}, we obtain $D(\guts(D(M_B)\cut DR))$ by first, cutting along a collection of squares corresponding to red bigons and cycles of fused units, then cutting along remaining nontrivial squares that are not parallel to one of these squares, nor parallel to a single ideal vertex, and again discarding components with the combinatorics of a $(2,q)$-torus link by \reflem{IBundlesBigonStrings}.

If there are no fused units in the diagram, then a maximal collection of squares used to create $D(\guts (D(M_B)\cut DR))$ is a maximal collection of squares used to create $\guts(M_{BR})$, by Lemmas~\ref{Lem:EssentialAnnuliStep1} and~\ref{Lem:EssentialAnnuliStep2}. To build the guts in both cases, we decompose along these squares and throw away components with the combinatorics of a $(2,q)$-torus link, by \reflem{IBundlesBigonStrings} and \reflem{LargerManifold}. Then obtain the manifolds by doubling along blue and red faces. These are homeomorphic. 

In the case that there is a cycle of fused units, then note in the first step of the decomposition of $M_B$ we cut along distinct collections of squares in the two checkerboard polyhedra, and these squares intersect each other; see \reffig{FusedUnit}. Hence we must choose one collection of squares to complete to a maximal collection to form $M_{BR}$. However, in both polyhedra the exterior of the cycle of fused units has the combinatorics of a $(2,q)$-torus link, so is discarded. And in both, in the second step of the decomposition there will be two squares within each fused unit that decompose the fused unit into two ``units'' and a $(2,3)$-torus link. Thus only the units remain in the decomposition of $\guts( D(M_B)\cut DR)$. Similarly, in the decomposition of $\guts(M_{BR})$, choose squares from the fused units and build a maximal collection of disjoint squares. After discarding $(2,q)$-torus links, at most the units remain in the guts decomposition. Again there is a homeomorphism of polyhedra, and these are identified in both manifolds by doubling along blue and red faces, so the manifolds are homeomorphic. 
\end{proof}

\begin{proof}[Proof of \refprop{GutsHomeomorphisms}]
  The homeomorphism result of the proposition is by \reflem{GutsHomeomorphisms}. The fact that the manifolds are obtained by eight copies of polyhedra as claimed follows from \reflem{GutsPolyhedra}.
\end{proof}

\begin{proof}[Proof of \refthm{RightAngledGuts}]
In the hyperbolic structure on $D(\guts(D(M_B)\cut DR))$, the red surface is preserved by a reflection, thus as a consequence of Mostow--Prasad rigidity, it must be totally geodesic. Similarly, the blue surface in $D(\guts(D(M_R)\cut DB))$ is totally geodesic. By \reflem{GutsHomeomorphisms}, these manifolds are homeomorphic, hence isometric again by Mostow--Prasad rigidity. It follows that red and blue surfaces are totally geodesic in both. 

Finally, reflection in the red fixes the red surface pointwise, and takes the blue surface to a totally geodesic surface intersecting the red. Similarly, reflection in the blue fixes the blue pointwise and takes the red to a totally geodesic surface. This is possible only if the two surfaces meet at right angles. 
\end{proof}

\section{Topology}\label{Sec:topology}

The next part of the story is topological. In the previous section, we identified guts by considering squares in the diagram graph. Topologically, cutting along a square can be seen as pulling a tangle out of the diagram. We review here some of the literature on tangle decompositions of alternating links. We will see that the decomposition into guts polyhedra above is related to decompositions into algebraic parts of an alternating link, due in part to Conway \cite{Conway}, Bonahon and Siebenmann \cite{BonahonSiebenmann}, and especially Thistlethwaite \cite{Thistlethwaite:Algebraic}. However, there are some subtle differences. In this section, we review many of the results discovered by Thistlethwaite, and use them to build ideal polyhedra associated with an alternating link.

Following Thistlethwaite, we define a \emph{tangle} to be a pair $(X,T)$ where $X$ is homeomorphic to $S^3$ with a finite number of balls removed, and the set $T\subset X$ is a 1-manifold, properly embedded in $X$, such that for every 2-sphere component $F$ of $\bdy X$, $\bdy T\cap F$ consists of four points. Note that this generalises the typical definition of a 2-tangle, in which $X$ is a ball and $T$ is a 1-manifold inside the ball meeting $\bdy X$ in four points. Figure~\ref{Fig:ThistlethwaiteElementary} shows three different examples of tangles using this more general definition. 

\begin{figure}
  \includegraphics{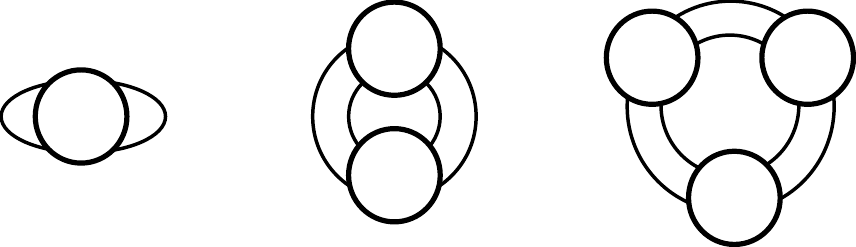}
  \caption{Left to right: A trivial tangle with one boundary component, a trivial tangle with two boundary components, a hollow elementary tangle.}
  \label{Fig:ThistlethwaiteElementary}
\end{figure}

A tangle $(X,T)$ is \emph{trivial} either if $X$ has one boundary component (is a ball) and $(X,T)$ is homeomorphic by a homeomorphisim of pairs to the tangle $(B^3, \mbox{two unknotted arcs})$, or if $X$ has two boundary components and $(X,T)$ is homeomorphic by a homeomorphism of pairs to the tangle $(S^2\times I, \mbox{four unknotted arcs})$ where the endpoints of each arc lie on different boundary components of $S^2\times I$. Note that a homeomorphism of a tangle with one boundary component to a trivial tangle is well-known to be determined by a rational number \cite{Conway}, and thus such a tangle is also called a \emph{rational} tangle. 

The \emph{hollow elementary tangle} is homeomorphic to the tangle $(X,T)$ where $X$ has three boundary components, and $T$ consists of six unknotted arcs with a pair of arcs between each pair of boundary components of $X$. \reffig{ThistlethwaiteElementary} shows two trivial tangles and the hollow elementary tangle.  An \emph{elementary} tangle is obtained from the hollow elementary tangle by gluing a trivial tangle into zero, one, or two of its boundary components. 

For our purposes in this paper, we define a \emph{Conway sphere} in a tangle $(X,T)$ or link $(S^3,L)$ to be a 2-sphere $F$ in the interior of $X$ meeting $T$ transversely in four points.
Note this differs from Thistlethwaite's definition in \cite{Thistlethwaite:Algebraic}: he requires his Conway spheres to be such that neither component of $(X,T)-(F,F\cap T)$ is a trivial tangle with one or two boundary components, forcing the 4-punctured sphere to be essential in $X-T$. We will refer to Thistlethwaite's spheres as \emph{essential} Conway spheres, and use the term Conway sphere to refer to the much more general situation. In any case, we say two Conway spheres are \emph{parallel} if they co-bound a trivial tangle with two boundary components.

A note on historical definitions that we will compare to ours: 
Thistlethwaite defines a link or tangle to be \emph{algebraic} if it is elementary, or if it can be cut along a collection of essential Conway spheres into elementary tangles.
For a tangle $(X,T)$ (or a link $(S^3,L)$) Bonahon and Siebenmann consider the double cover $\widetilde{X}$ of $X$ branched over $T$ \cite{BonahonSiebenmann}. Any essential Conway sphere lifts to an essential torus, and any algebraic tangle lifts to a graph manifold.

It follows by \cite{menasco84, Thistlethwaite:Algebraic} that
$(S^3,L)$ contains a maximal finite collection of pairwise disjoint
and non-parallel essential Conway spheres $F_1, \dots, F_n$, and this
collection is unique outside of elementary tangles.  The
\emph{algebraic part} of $(S^3,L)$, defined by Thistlethwaite in
\cite{Thistlethwaite:Algebraic}, is the union of the closure of
components of $S^3-\cup F_i$ that are elementary.

In \cite{menasco84}, Menasco showed that within an alternating link
diagram, essential Conway spheres can have one of two forms, visible
or hidden, corresponding to \reffig{VisibleHidden}.

\begin{definition}\label{Def:VisibleConwaySphere}
For a given alternating link diagram, a \emph{visible Conway sphere}
intersects the plane of projection in a simple closed curve meeting
the link diagram transverely in four points. We also require that a
visible Conway sphere bounds at least two crossings on each side.
\end{definition}

In the case of a hidden essential Conway sphere, the diagram of the
link always resembles that of the Borromean rings, with four tangles
added, and the hidden Conway sphere meets the plane of projection in
two curves.  By \cite[Proposition~5.1]{Thistlethwaite:Algebraic}, a
visible essential Conway sphere is visible in any alternating diagram
of the link. Consequently, a hidden essential Conway sphere is hidden
in any alternating diagram of the link; see also
\cite{HassThompsonTsvietkova}.  For purposes of this paper, we may
completely ignore hidden essential Conway spheres. However we will
need to consider visible ones, both essential and inessential.  For
example, by \refdef{VisibleConwaySphere}, the boundary sphere of a
trivial tangle (rational tangle) with at least two crossings in a link
diagram is a visible Conway sphere.

\begin{figure}
  \includegraphics{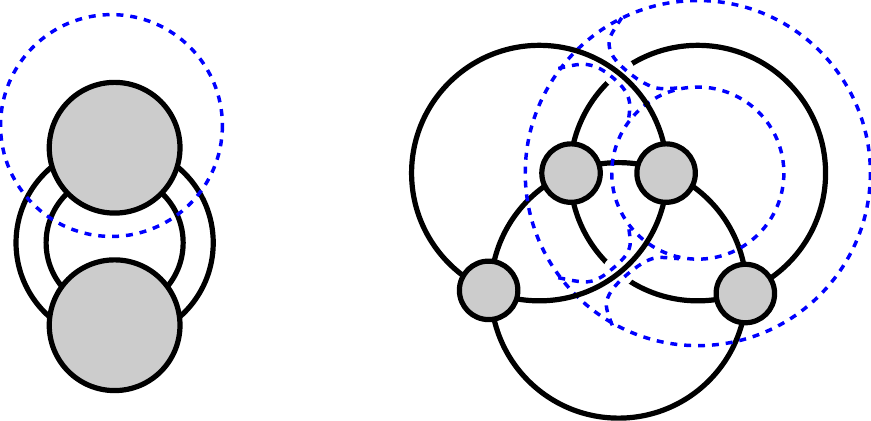}
  \caption{Left: The dashed line shows the intersection of a visible
    Conway sphere with the link diagram. Right: A hidden Conway sphere
    intersects the projection plane in two concentric circles,
    and has two saddles where the two overstrands cross. }
  \label{Fig:VisibleHidden}
\end{figure}

\begin{definition}\label{Def:VisibleAlgebraicPart}
Let $F_1, \dots, F_n$ be a maximal, pairwise disjoint and non-parallel
collection of visible Conway spheres in a reduced, prime, alternating
diagram $D$. The \emph{visible algebraic part} of $D$ is the union of
components of $S^3-\cup F_i$ that are elementary. The diagram $D$ is
\emph{visibly algebraic} if all components of $S^3-\cup F_i$ are
elementary.
\end{definition}

Note that the visible algebraic part differs from Thistlethwaite's algebraic part in two important ways. First, the two differ on diagrams that resemble the Borromean rings. The visible algebraic part will not contain tangles as shown on the right of \reffig{VisibleHidden}, but Thistlethwaite's algebraic part will contain such tangles. Second, the two differ in the presence of a visible \emph{inessential} Conway sphere that bounds a rational tangle that is not a subset of a larger algebraic tangle. Thistlethwaite will completely ignore these rational tangles, but we include them in our visible algebraic part. 

A tangle diagram $(X,T)$ is a regular projection of a tangle onto a plane of projection (a 2-sphere), with over-under crossing information added to the projection of $T$. A crossing $x$ in a tangle diagram is \emph{inessential} if there exists an arc $\alpha$ on the plane of projection with $\bdy \alpha$ lying on the same component of $\bdy X$ and such that $\alpha$ meets $T$ exactly in the point $x$; see \reffig{ThistlethwaiteReduced}, which is adapted from \cite{Thistlethwaite:Algebraic}. Note that we may perform a flype to move the crossing to be adjacent to $\bdy X$, then isotope two points of $\bdy T$ on $\bdy X$ to remove the crossing as in \reffig{ThistlethwaiteReduced}, right. A tangle diagram is \emph{reduced} if it contains no inessential crossings.

\begin{figure}
  \includegraphics{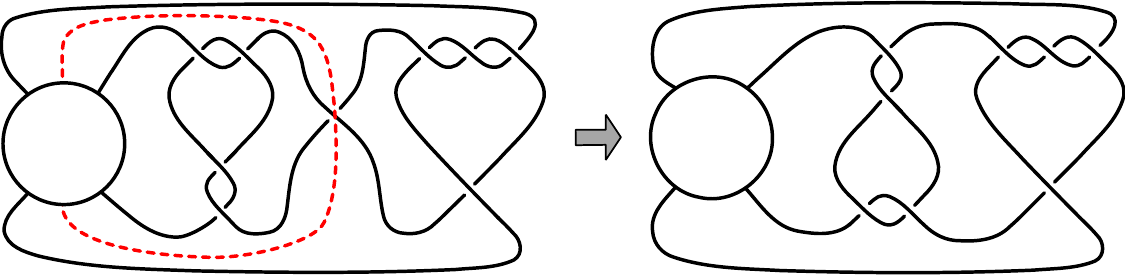}
  \caption{Left: An inessential crossing. Right: The crossing can be removed.}
  \label{Fig:ThistlethwaiteReduced}
\end{figure}

\begin{definition}\label{Def:CrossingClosure}
Let $(X,T)$ be a tangle with a reduced, prime alternating
diagram. Attach to each component of $\bdy X$ the trivial tangle with
one boundary component whose diagram has a single crossing, as in
\reffig{CrossingClosure}.  Since $T$ is an alternating tangle
diagram, the four points in $T\cap\bdy X$ alternate as endpoints of
over-crossing and under-crossing arcs of $T$.  Thus, for each added
crossing as in \reffig{CrossingClosure}, we can choose its sign such
that the diagram of the resulting link is alternating. Such an
alternating link is called the \emph{crossing closure} of the tangle
$(X,T)$.
\end{definition}

\begin{figure}
  \import{figures/}{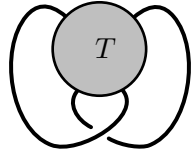}
  \caption{The crossing closure of tangle $T$.}
  \label{Fig:CrossingClosure}
\end{figure}

\begin{definition}[Tangle polyhedra]\label{Def:TanglePolyhedra}
Let $L$ be a link with a reduced, prime, alternating diagram $D$. Define the \emph{tangle polyhedra} associated with $L$ as follows:

If $D$ is visibly algebraic, then define the \emph{tangle polyhedra} to be the empty set.

If $D$ admits no visible Conway sphere, then define the \emph{tangle polyhedron} to be the checkerboard polyhedron for $D$.

Otherwise, cut $D$ along a maximal collection of pairwise disjoint,
non-parallel, visible Conway spheres $F_1, \dots, F_n$.  Remove the
visible algebraic part. What remains consists of a nonempty collection
of tangles. For each, reduce the tangle diagram to remove all
inessential crossings. Next, form the crossing closure of each tangle,
giving a new collection of alternating link diagrams that are tangle closures
of non-algebraic tangles. Finally, the \emph{tangle polyhedra} are the
union of the checkerboard polyhedra of the new reduced alternating
link diagrams.
\end{definition}

\begin{example}\label{Ex:TanglePolyhedra}
  We provide here a few examples of tangle polyhedra for different diagrams. 
  \begin{enumerate}
  \item By definition, the tangle polyhedra of any visibly algebraic link is the empty set. This includes 2-bridge knots, Montesinos knots, and more complicated algebraic links. The polyhedra of this paper do not give useful information for these links; see the discussion in \refsec{Discussion}.
  \item The usual diagram of the Borromean rings admits no visible Conway spheres. Thus the tangle polyhedra consist of two ideal octahedra, which agree with the checkerboard polyhedra, as described in Thurston's notes \cite{tnotes}. 
  \item A {\em weaving knot} $W(p,q)$ is the alternating knot or link with the same projection as the standard closed $p$--braid $(\sigma_1\ldots\sigma_{p-1})^q$ diagram of the torus knot or link $T(p,q)$. See Section~\ref{sec:Wpq} for more details on weaving knots. For $p,q\geq 3,\ W(p,q)$ admit no visible Conway spheres, thus again their tangle polyhedra consist of two ideal polyhedra with the same combinatorics as the diagram of the weaving knot.
  \item Take a weaving knot diagram, as above, but replace one crossing by a rational tangle with at least two crossings. By Thistlethwaite's definition of the algebraic part of an alternating link, the resulting link has no algebraic part. However, the \emph{visible} algebraic part of this link, as in \refdef{VisibleAlgebraicPart}, consists of that rational tangle. Removing it and taking the crossing closure of the result gives back the original weaving knot diagram. Thus, the tangle polyhedra of the weaving knot with a crossing replaced by a rational tangle agree with the tangle polyhedra of the original weaving knot.
  \end{enumerate}
\end{example}

In general, replacing any crossing of a diagram by a rational or algebraic tangle does not affect the tangle polyhedra of the result. 

\begin{theorem}\label{Thm:LinkInvariant}
Let $L$ be a link with a reduced, prime, alternating diagram. The tangle polyhedra associated with a given diagram of $L$ are well-defined, and independent of choice of alternating diagram of $L$. Therefore they are a link invariant. 
\end{theorem}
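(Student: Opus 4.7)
The plan is to apply the Menasco--Thistlethwaite flyping theorem \cite{menasco-thist:alternating} to reduce the statement to two tasks: well-definedness of the tangle polyhedra from a single reduced, prime, alternating diagram, and invariance under a single flype. For the first task, the only genuine choice in \refdef{TanglePolyhedra} is that of a maximal, pairwise disjoint, non-parallel collection $F_1,\dots,F_n$ of visible Conway spheres. By Thistlethwaite \cite{Thistlethwaite:Algebraic} (building on \cite{menasco84, BonahonSiebenmann}), the subcollection of \emph{essential} visible Conway spheres in any such maximal collection is unique up to ambient isotopy outside the elementary pieces. Any two choices of maximal collection therefore differ only in their visible \emph{inessential} Conway spheres, each of which bounds a rational tangle absorbed into the visible algebraic part discarded in \refdef{TanglePolyhedra}. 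Hence the non-algebraic tangles produced by the cut are canonically determined by the diagram up to isotopy.

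The next step is to verify that each non-algebraic tangle piece has no internal essential Conway sphere (by maximality of $\{F_i\}$), so its crossing closure is a prime alternating link with no essential visible Conway sphere. By the Menasco--Thistlethwaite classification, such an alternating link admits a reduced alternating diagram that is unique up to reflection, since no flype regions are available; consequently, its checkerboard polyhedron is an unambiguous combinatorial invariant of the non-algebraic piece itself. The reduction step removing inessential crossings preserves the alternating link and yields a well-defined reduced diagram, so the entire construction is well-defined on a given diagram.

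For diagram invariance, it suffices to show the tangle polyhedra are preserved under a single flype. A flype is supported in a flype region bounded by an essential visible Conway sphere $F$; up to isotopy, $F$ may be taken to belong to the chosen maximal collection $\{F_i\}$. The flype only alters the two pieces adjacent to $F$. If both pieces are algebraic, the non-algebraic tangles and their crossing closures are unaffected. If one adjacent piece is non-algebraic, the flype rotates that tangle and shifts a single crossing across $F$; because the crossing closure construction adds exactly one canonical crossing at each boundary component of $X$, such a boundary crossing is absorbed into (or supplied by) the closure crossing, so the crossing closure of the flyped tangle is link-isotopic to that of the original. Since the resulting checkerboard polyhedron is a well-defined invariant of the crossing closure by the previous paragraph, the corresponding tangle polyhedron does not change.

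The main obstacle will be the careful bookkeeping in the flype-invariance step, especially when a flype transfers a crossing between a non-algebraic piece and the visible algebraic part, and when several flypes cascade across the collection of Conway spheres. The resolution will hinge on Thistlethwaite's localization of flypes to flype circles bounded by visible Conway spheres \cite{Thistlethwaite:Algebraic}, together with the designed absorption property of the crossing closure: since Example~\ref{Ex:TanglePolyhedra}(4) already shows that replacing a crossing by a rational tangle in the crossing closure does not affect the tangle polyhedra, the same insensitivity is expected to accommodate the boundary crossings exchanged by flypes, completing the proof that the tangle polyhedra depend only on the underlying link.
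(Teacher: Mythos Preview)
Your overall strategy matches the paper's: invoke the Menasco--Thistlethwaite flyping theorem and reduce to checking invariance under a single flype. Where you diverge is in the mechanism by which the flyped crossing is neutralised, and the paper's mechanism is both simpler and cleaner than yours.

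You argue that the crossing $c$ moved by the flype is ``absorbed into (or supplied by) the closure crossing'' so that the crossing closures before and after are link-isotopic. The paper instead observes that $c$ is always \emph{removed before one ever forms the crossing closure}: either $c$ sits in the visible algebraic part (and is discarded with it), or $c$ is an inessential crossing of the non-algebraic tangle adjacent to $F$ (and is deleted in the reduction step of \refdef{TanglePolyhedra}). After the flype, $c$ lands on the other side of $F$ and is again removed for one of these two reasons. Hence the reduced non-algebraic tangles themselves are literally identical before and after the flype, not merely their closures up to isotopy. This sidesteps the ``careful bookkeeping'' and ``cascading flypes'' you flag as obstacles: no isotopy of crossing closures needs to be tracked at all.

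Your treatment of well-definedness for a fixed diagram is more explicit than the paper's (which essentially leaves it implicit), and your appeal to the uniqueness of essential visible Conway spheres outside elementary pieces is the right idea. One small correction: you assert that the crossing closure of a non-algebraic piece has a reduced alternating diagram unique up to reflection ``since no flype regions are available.'' Maximality of $\{F_i\}$ only rules out visible Conway spheres in the tangle, and the closure crossing itself could in principle sit in a flype configuration; what actually makes the checkerboard polyhedron well-defined is that flypes preserve it, which is exactly what the main argument establishes. So this step is circular as stated and should instead rely on the flype-invariance you prove next.
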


\begin{proof}
By the proof of the Tait flyping conjecture
\cite{menasco-thist:alternating}, any two alternating diagrams of $L$
differ by a finite sequence of flypes. We will show that a single
flype does not affect the tangle polyhedra.  It follows that any
finite sequence of flypes does not affect the tangle polyhedra of $L$.

We consider the cases in \refdef{TanglePolyhedra} for the reduced,
prime, alternating diagram $D$ of $L$.  If $D$ admits no visible
Conway sphere, then $D$ does not admit flypes.  Suppose $D$ admits a
flype along a visible Conway sphere $F$ at a crossing $c$.  We can
assume that $F$ is part of the maximal collection of pairwise
disjoint, non-parallel, visible Conway spheres for $D$, as $F$ can be
chosen first.  If $D$ is visibly algebraic, then by definition it
remains so after the flype.  Before the flype, $c$ is removed in one
of two ways: Either it lies in the visible algebraic part of $D$, or
it is removed as an inessential crossing with respect to the tangle
inside $F$.  After the flype, $c$ is again removed in one of these
ways.  Thus, following \refdef{TanglePolyhedra}, we obtain the same
collection of non-algebraic tangles before and after the flype.

So a flype does not change any of the diagrams of the
alternating links used to construct the tangle polyhedra. Therefore,
the tangle polyhedra form a link invariant.
\end{proof}

\begin{theorem}\label{Thm:TangleEqualsGuts}
Let $L$ be a link with a reduced, prime, alternating diagram. Any
choice of a maximal collection of disjoint squares determines visible
Conway spheres and tangle polyhedra on the one hand, and guts
polyhedra on the other.  Then the associated tangle polyhedra and guts
polyhedra are identical.
\end{theorem}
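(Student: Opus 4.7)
The plan is to match the two constructions operation-by-operation using the observation that a normal square in the checkerboard polyhedron is the same combinatorial datum as a visible Conway sphere in the diagram. First I would establish this dictionary precisely: a normal square meets the polyhedron in four edges and four faces of alternating colours, and the corresponding simple closed curve on the plane of projection meets the diagram in four points. Under the nontriviality condition (bounding at least two ideal vertices on each side) it is a visible Conway sphere in the sense of \refdef{VisibleConwaySphere}. Hence a maximal collection of disjoint nontrivial squares gives a maximal collection of pairwise disjoint, non-parallel visible Conway spheres, and conversely.

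Second, I would show that the two ``contraction'' operations agree. On the guts side, cutting a checkerboard polyhedron along a normal square and collapsing the square to a new ideal vertex produces exactly the checkerboard polyhedron of the crossing closure of the sub-tangle cut off by the corresponding visible Conway sphere. The key identification is that the 4-valent ideal vertex arising from the collapsed square has the same local combinatorics (four incident edges lying in four alternating faces) as the ideal vertex in the checkerboard polyhedron coming from the extra crossing added in \refdef{CrossingClosure}. Inessential crossings in the sub-tangle correspond to bigon faces in the resulting polyhedron that are absorbed by flypes before the closure; removing them amounts to working with the reduced, prime, alternating diagram whose checkerboard polyhedron is unambiguously defined.

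Third, I would show that the pieces discarded on each side correspond. By \reflem{LargerManifold} and \reflem{IBundlesBigonStrings}, the polyhedra discarded on the guts side are exactly those with the combinatorics of a standard $(2,q)$-torus link diagram. These come in two forms: strings of bigons, whose crossing closures are rational tangles (trivial tangles with one boundary component), and cycles of fused units, whose crossing closures are elementary tangles in the sense of \refsec{topology}. Conversely, a rational or elementary tangle, once closed by Thistlethwaite's crossing closure, has the combinatorics of a $(2,q)$-torus link diagram. Thus the polyhedra discarded on the guts side correspond bijectively to the rational and elementary tangles making up the visible algebraic part of $D$. What remains on each side is the checkerboard polyhedron of the crossing closure of the non-algebraic tangles, so the tangle polyhedra and the guts polyhedra coincide.

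The step I expect to be the main obstacle is the identification in the second sub-case: showing that the interior of a cycle of $n$ fused units, after cutting along the $n$ squares and taking crossing closures of the pieces produced, recovers exactly Thistlethwaite's elementary tangle (a hollow elementary tangle with rational tangles glued into some of its boundary components). The rational-tangle-to-bigon-chain identification is essentially a direct inspection of a twist region, but the elementary tangle case requires carefully tracking how the three (or more) boundary spheres of the hollow elementary tangle are realised as the squares bounding a cycle of fused units in \reflem{EssentialAnnuliStep1}, and verifying that the resulting closed diagram has the promised $(2,q)$-torus link combinatorics. Once this explicit identification is in hand, the rest of the argument follows by applying the established dictionary to all of the squares in the chosen maximal collection.
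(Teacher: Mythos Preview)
Your three-step dictionary --- normal squares $\leftrightarrow$ visible Conway spheres, collapsing a square to an ideal vertex $\leftrightarrow$ taking the crossing closure, and discarded $(2,q)$-torus-link pieces $\leftrightarrow$ the visible algebraic part --- is exactly the paper's argument, and the paper invokes Thistlethwaite's classification of alternating tangles to certify the first correspondence just as you do.

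One clarification on the obstacle you flag: cycles of fused units from \reflem{EssentialAnnuliStep1} arise only in the intermediate step of cutting along $B$ alone, and that asymmetry is already absorbed into \refprop{GutsHomeomorphisms} and \reflem{LargerManifold}; by the time the guts polyhedra are in hand they are characterised purely as the pieces of the checkerboard polyhedron cut along a maximal disjoint family of nontrivial squares with the $(2,q)$-bigon-chain pieces removed (\reflem{IBundlesBigonStrings}). So the check you actually need is not a fused-unit analysis but the direct observation that each trivial or elementary tangle diagram in \reffig{ThistlethwaiteElementary}, once each boundary sphere is collapsed to an ideal vertex, has the combinatorics of a chain of bigons --- which is immediate from the figure. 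Your anticipated difficulty is therefore lighter than you expect.
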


\begin{proof}
By \refdef{GutsPolyhedra}, the guts polyhedra are obtained by cutting the checkerboard polyhedra associated with the diagram of $L$ along a maximal collection of disjoint squares. The squares that have been cut become ideal vertices. By \reflem{IBundlesBigonStrings}, any component with the combinatorial form of a $(2,q)$-torus link is discarded. The remaining components are the guts polyhedra. 

On the other hand, we claim that the tangle polyhedra are also
obtained by considering a maximal collection of disjoint squares.  For
a prime alternating link, the projection graph of the reduced
alternating diagram and the checkerboard polyhedral graph are the
same.  Hence, by separating the diagram of $L$ into tangles along a
maximal collection of disjoint squares, we get reduced alternating
tangle diagrams that do not admit a visible Conway disk, as defined in
\cite{Thistlethwaite:Algebraic, Thistlethwaite_JKTR}.  By the
classification of alternating tangles in \cite{Thistlethwaite_JKTR},
these are exactly the tangles obtained by separating the diagram of
$L$ along visible Conway spheres.  Hence, separating the diagram of
$L$ along squares as in \refdef{GutsPolyhedra} or along visible
Conway spheres as in \refdef{TanglePolyhedra} results in the same set
of reduced alternating tangle diagrams.

The visible algebraic part consists of the union of tangles
that are either trivial or elementary. Note in
\reffig{ThistlethwaiteElementary} that the elementary tangles have a
diagram with the combinatorics of a $(2,q)$-torus link.  It follows
that we remove exactly the same portion of the diagram to form guts
polyhedra and tangle polyhedra. In the case of the tangle polyhedra,
we take the crossing closure, inserting a crossing into each
square. This causes the associated checkerboard polyhedra to have an
ideal vertex exactly in the location of the square, which is exactly
where the guts polyhedra have an ideal vertex. Thus, the combinatorial
polyhedra are identical.
\end{proof}

\begin{corollary}
  For a prime, alternating link $L$:
  \begin{itemize}
  \item The guts polyhedra for $L$ give a link invariant.
  \item The tangle polyhedra admit a right-angled ideal hyperbolic structure.
  \end{itemize}
\end{corollary}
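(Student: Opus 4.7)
The plan is to derive the corollary as an immediate consequence of the three main theorems already proved in the excerpt: \refthm{RightAngledGuts}, \refthm{LinkInvariant}, and \refthm{TangleEqualsGuts}. The strategy is to use \refthm{TangleEqualsGuts} as the bridge that lets each property (invariance on one side, right-angled hyperbolic structure on the other) transfer to the other collection of polyhedra.

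For the first bullet, I would argue as follows. The guts polyhedra are defined from a choice of reduced, prime, alternating diagram of $L$ together with a maximal collection of disjoint nontrivial squares (\refdef{GutsPolyhedra}), so a priori they depend on this diagrammatic data. By \refthm{TangleEqualsGuts}, however, the guts polyhedra coincide with the tangle polyhedra associated to the same diagram. By \refthm{LinkInvariant}, the tangle polyhedra depend only on the link $L$ and not on the choice of alternating diagram. Transporting this invariance back across the identification of \refthm{TangleEqualsGuts} shows that the guts polyhedra are likewise a link invariant.

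For the second bullet, I would proceed in the reverse direction. By \refthm{RightAngledGuts}, the guts polyhedra inherit a hyperbolic structure from the hyperbolic manifold $D(\guts(D(M_B)\cut DR))$ in which the red and blue faces are totally geodesic and meet at right angles; in particular they carry the structure of right-angled ideal hyperbolic polyhedra. Since \refthm{TangleEqualsGuts} identifies the tangle polyhedra with the guts polyhedra as combinatorial ideal polyhedra (face-by-face, with the ideal vertices arising from squares in the same places), the same right-angled ideal hyperbolic structure realizes the tangle polyhedra.

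I expect no serious obstacle: the corollary is a packaging statement whose content is entirely carried by the three referenced theorems. The only point that warrants explicit care is verifying that the identification of \refthm{TangleEqualsGuts} is as combinatorial ideal polyhedra (same faces, same ideal vertex pattern), so that a geometric structure on one side unambiguously transfers to the other, and so that a link-invariant combinatorial type on one side gives a link-invariant combinatorial type on the other. This is already explicit in the conclusion of the proof of \refthm{TangleEqualsGuts}, so the proof of the corollary reduces to citing the three results in the correct order.
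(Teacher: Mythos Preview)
Your proposal is correct and matches the paper's approach: the paper states this corollary immediately after \refthm{TangleEqualsGuts} without a separate proof, leaving it as the evident consequence of combining \refthm{LinkInvariant}, \refthm{RightAngledGuts}, and \refthm{TangleEqualsGuts} exactly as you describe.
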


\section{Combinatorics}\label{Sec:combinatorics}

The third part of the story is the combinatorics of the diagram graph, which has already played a role in establishing the correspondence between guts polyhedra and tangle polyhedra.

Start with a reduced, twist-reduced, prime alternating diagram of a link
$L$. This has a 4-valent projection graph $\Gamma(L)$, which may have
bigons, and a planar dual graph $\Gamma^*(L)$.
A \emph{$k$-circuit} is a simple closed curve composed of $k$ edges of
a graph.  In this paper, we will consider only $4$-circuits on
$\Gamma^*(L)$ arising from a reduced, twist-reduced, prime alternating link
diagram.  We say that a $4$-circuit of $\Gamma^*(L)$ is \emph{trivial}
if it bounds a single crossing of $L$ on either side, and otherwise it
is \emph{nontrivial}.

To avoid ambiguity, we will refer to the diagram of a trivial tangle with either one or two boundary components as a \emph{rational tangle diagram}.

\begin{definition}\label{Def:CrossingParallel}
  Two $4$-circuits are \emph{crossing-parallel} if they differ only by
  passing on opposite sides of a single crossing, as in
  \reffig{Parallel4Circuit}. Two $4$-circuits $A$ and $B$ are
  \emph{parallel} if there is a sequence of $4$-circuits $A_1=A,\,
  A_2,\,\dots,\, A_n=B$, with $A_j$ crossing-parallel to $A_{j+1}$ for
  $j=1, \dots, n-1$.
\end{definition}

\begin{figure}
  \includegraphics{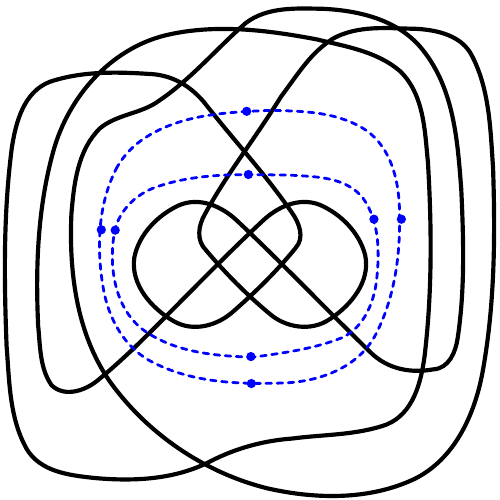}
  \caption{Two crossing-parallel $4$-circuits, shown as dashed lines. Although the $4$-circuits share two edges, we sketch one pushed slighty inside the other.}
  \label{Fig:Parallel4Circuit}
\end{figure}

We will consider a nontrivial $4$-circuit on the projection plane
for $\Gamma(L)$.  Capping off by disks on either side of the
projection plane, the $4$-circuit gives an embedded 4-punctured
sphere, which is a visible Conway sphere.

\begin{lemma}\label{Lem:Parallel}
Let $A$ and $B$ be $4$-circuits in $\Gamma^*(L)$ with corresponding 4-punctured spheres $\overline{A}$ and $\overline{B}$.
Then the following are equivalent:
\begin{enumerate}
\item $A$ and $B$ are parallel,
\item $\overline{A}$ and $\overline{B}$ are ambient isotopic in $S^3-L$,
\item $A$ and $B$ cobound a rational tangle diagram.
\end{enumerate}
\end{lemma}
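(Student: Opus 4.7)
The plan is to establish the three-way equivalence via $(1) \Leftrightarrow (3)$ combinatorially, $(3) \Rightarrow (2)$ as a direct geometric consequence, and $(2) \Rightarrow (3)$ as the main topological step requiring the alternating hypothesis.

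For $(1) \Leftrightarrow (3)$, I would use Conway's characterization of rational tangles by iterative twisting. A single crossing-parallel step, by \refdef{CrossingParallel}, produces two 4-circuits cobounding exactly one crossing on the projection plane, which is the simplest rational tangle diagram. Induction on the length of a crossing-parallel sequence then gives $(1) \Rightarrow (3)$, since rational tangle diagrams are closed under adding a single twist. Conversely, Conway's normal form decomposes any rational tangle diagram into a sequence of single-crossing twists, and each twist corresponds to moving from one 4-circuit to a crossing-parallel 4-circuit, producing the required parallel sequence for $(3) \Rightarrow (1)$.

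For $(3) \Rightarrow (2)$, a rational tangle diagram cobounded by $A$ and $B$ yields a region $W \subset S^3$ cobounded by $\overline{A}$ and $\overline{B}$ that is homeomorphic as a pair to the standard trivial 2-tangle $(S^2 \times I,\ \{p_1, \ldots, p_4\} \times I)$. In this standard model there is an evident product isotopy from $S^2 \times \{0\}$ to $S^2 \times \{1\}$ in the complement of the four product arcs, and pulling back through the homeomorphism gives the required ambient isotopy from $\overline{A}$ to $\overline{B}$ in $S^3 - L$.

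The main step is $(2) \Rightarrow (3)$. Suppose $\overline{A}$ and $\overline{B}$ are ambient isotopic in $S^3 - L$. After perturbing $A$ and $B$ so that they are disjoint on the projection sphere, $\overline{A}, \overline{B}$ become disjoint 2-spheres in $S^3$ cobounding a region $W \cong S^2 \times I$ that meets $L$ in four arcs, giving a 2-tangle $(W, L \cap W)$. My plan is to prove this tangle is trivial. When both surfaces are essential in the link exterior $E(L) = S^3 \setminus N(L)$, Waldhausen's parallel-surface theorem yields a product region $\overline{A} \times I$ in $E(L)$; this product structure extends across the link via the four meridional side-annuli to give $(W, L \cap W) \cong (S^2 \times I,\ \{p_i\} \times I)$. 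When both are inessential, each bounds a rational tangle on one side, and the fact that an ambient-isotopic pair of 4-punctured spheres sitting inside a rational tangle must be boundary parallel forces the cobounding region to again be a trivial tangle. Once $(W, L \cap W)$ is topologically trivial, Menasco's visible Conway sphere theorem together with Thistlethwaite's flype rigidity for alternating tangles identifies its alternating diagrammatic image as a rational tangle diagram, giving (3).

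The hard part will be the $(2) \Rightarrow (3)$ direction: obtaining the product region in $E(L)$ from the abstract ambient isotopy (with a separate argument in the inessential case), extending it across the link to a trivial 2-tangle in $S^3$, and finally translating this topological conclusion into a diagrammatic one. The alternating hypothesis is crucial in this last step, since a priori a topologically trivial tangle might admit non-rational diagrams, and we rely on Menasco--Thistlethwaite rigidity to rule this out.
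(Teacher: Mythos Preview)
Your overall logical structure is fine, but there is a genuine gap in your $(3) \Rightarrow (1)$ argument, and your $(2) \Rightarrow (3)$ is considerably more complicated than necessary.

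For $(3) \Rightarrow (1)$: you invoke ``Conway's normal form decomposes any rational tangle diagram into a sequence of single-crossing twists.'' This conflates the topological tangle with its diagram. Conway's theorem says that a trivial tangle \emph{admits} a normal-form diagram built by successive twists; it does not say that \emph{every} diagram of a trivial tangle is of this form. Here you are handed a specific alternating sub-diagram between $A$ and $B$ which happens to represent a trivial tangle, and you must show that \emph{this particular diagram} is obtained by peeling off crossings adjacent to the boundary one at a time. That is exactly the content of Thistlethwaite's \cite[Corollary~3.2]{Thistlethwaite:Algebraic}: an alternating diagram of a trivial tangle with two boundary components is either unreduced (has an inessential crossing at the boundary) or has no crossings. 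The paper uses this directly; without it, your induction does not get started.

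For $(2) \Rightarrow (3)$: your Waldhausen/case-analysis route can be made to work, but it is doing far more than needed and has its own loose ends (why can the two $4$-circuits be made disjoint on the projection sphere in the first place? your inessential--inessential case is only a sketch). The paper's route is much shorter: ambient isotopic $4$-punctured spheres cobound a trivial tangle with two boundary components essentially by definition, and then the remark following \cite[Corollary~3.2]{Thistlethwaite:Algebraic} says this isotopy is realised by a rational tangle diagram in the alternating projection. In other words, both the hard implications $(2) \Rightarrow (3)$ and $(3) \Rightarrow (1)$ are absorbed into the same citation to Thistlethwaite; the paper proves the cycle $(1) \Rightarrow (2) \Rightarrow (3) \Rightarrow (1)$ rather than your $(1) \Leftrightarrow (3)$ plus $(2) \Leftrightarrow (3)$.
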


\begin{proof}
If $A$ and $B$ are parallel, then there exists a sequence of
crossing-parallel $4$-circuits between them. These determine a
sequence of embedded 4-punctured spheres, each pair of which encloses
a single crossing of the diagram. The region enclosed is homeomorphic
to $S_4\times I$, where $S_4$ denotes the 4-punctured sphere. Thus, we
have an ambient isotopy from one side of the crossing to the
other. Putting these together gives the ambient isotopy from
$\overline{A}$ to $\overline{B}$. Hence, $(1)$ implies $(2)$.

If $\overline{A}$ and $\overline{B}$ are ambient isotopic, then they
cobound a trivial tangle with two boundary components.  We can
represent this isotopy by a rational tangle diagram, as pointed out in
the remark after \cite[Corollary~3.2]{Thistlethwaite:Algebraic}.
Thus, $(2)$ implies $(3)$.

By \cite[Corollary~3.2]{Thistlethwaite:Algebraic}, if $A$ and $B$
cobound a rational tangle diagram, then it is either unreduced or has
no crossings.  Thus, the original unreduced alternating diagram is
obtained by adding one crossing at a time, adjacent to the $4$-circuit
$A$ (or $B$), and so $A$ and $B$ are parallel.  Hence, $(3)$ implies
$(1)$.
\end{proof}

If a $4$-circuit gives a visible Conway sphere that bounds a rational
tangle diagram, then that $4$-circuit is parallel to a trivial
$4$-circuit, which bounds one crossing.  However, the trivial
$4$-circuit inside a rational tangle diagram is not uniquely
determined.  Thus, by \reflem{Parallel}, $4$-circuits $A$ and $B$ are
parallel if and only if they cobound a rational tangle diagram, but
such a pair may not be uniquely determined by the tangle diagram.

\begin{definition}\label{Def:BoundingPair}
A pair of parallel $4$-circuits $A$ and $B$ is called a \emph{maximal
  bounding pair} if $A$ and $B$ cobound a rational tangle diagram
$\tau$, and there do not exist parallel $4$-circuits $A'$ and $B'$
that cobound a rational tangle diagram $\tau'$ which contains $\tau$
as a sub-tangle.  Two maximal bounding pairs $\{A,\,B\}$ and
$\{A',\,B'\}$ are \emph{disjoint} if they cobound disjoint rational
tangle diagrams.
\end{definition}

\begin{figure}
  \includegraphics{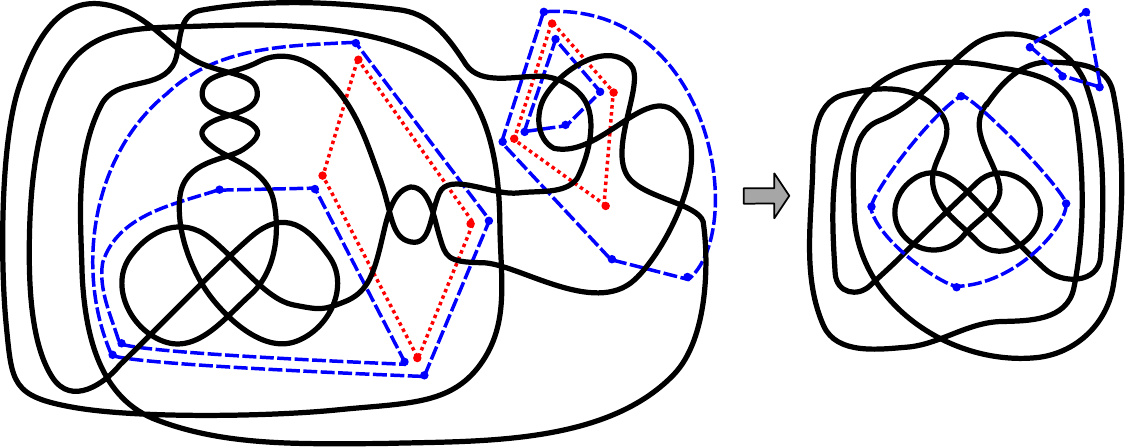}
  \caption{Left: Two disjoint maximal bounding pairs of 4-circuits are shown
(long dashed lines, in blue). Two additional $4$-circuits that are not in a maximal bounding pair are shown (short dotted lines, in red). Right: Rational reduction eliminates all crossings between both maximal bounding pairs of $4$-circuits.}
  \label{Fig:Outermost}
\end{figure}

\begin{definition}\label{Def:RationalReduction}
Let $L$ be a link with a reduced, twist-reduced, prime alternating
diagram.  For all pairwise disjoint maximal bounding pairs of
$4$-circuits in $L$, remove all crossings between each pair by a corresponding homeomorphism of each rational tangle.  Thus,
each rational tangle diagram with one boundary component is replaced
by a single crossing, and all crossings are removed in every rational
tangle diagram with two boundary components.  In the resulting
diagram, some $4$-circuits that were not parallel before may now be
parallel. In that case, repeat the process, removing all crossings
between pairwise disjoint maximal bounding pairs of $4$-circuits.
Because each move reduces the number of crossings, the process
eventually terminates.  We call this \emph{rational reduction} of the
diagram $L$. The final diagram is \emph{rationally reduced}.
\end{definition}

In \reffig{Outermost} left, two disjoint maximal bounding pairs of
$4$-circuits are shown in blue, and two $4$-circuits that are not in a
maximal bounding pair are shown in red.
We use these two maximal bounding pairs to obtain a rationally
reduced diagram in \reffig{Outermost} right.

\begin{definition}\label{Def:PrismaticCircuit}
A \emph{prismatic $4$-circuit} is a $4$-circuit $\gamma$ so that no two edges of $\Gamma(K)$ that meet $\gamma$ share a vertex in $\Gamma(K)$.
\end{definition}

It follows from this definition that a prismatic 4-circuit is
nontrivial. Conversely, we have the following.

\begin{lemma}\label{Lem:4circuits}
Each nontrivial $4$-circuit of a rationally reduced diagram is a prismatic $4$-circuit. 
\end{lemma}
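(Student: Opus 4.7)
My plan is to argue by contradiction. Suppose $\gamma$ is a nontrivial $4$-circuit in a rationally reduced diagram that fails to be prismatic, so two edges $e, e'$ of $\Gamma(L)$ meeting $\gamma$ share a vertex $v$. The main idea is to produce a $4$-circuit $\gamma'$ crossing-parallel to $\gamma$ at $v$, so that $\gamma$ and $\gamma'$ cobound a rational tangle diagram containing the crossing $v$, contradicting the rationally reduced hypothesis.

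In the principal case, where $e$ and $e'$ are adjacent in the cyclic order at $v$, I would construct $\gamma'$ by a local push of $\gamma$ across the crossing $v$: in a small neighborhood of $v$, the arc of $\gamma$ passing through the face corner between $e$ and $e'$ is replaced by an arc through the opposite corner. Then $\gamma'$ meets the other two edges incident to $v$ in place of $e, e'$, and differs from $\gamma$ only near $v$; by \refdef{CrossingParallel}, they are crossing-parallel, hence parallel. By \reflem{Parallel}, $\gamma$ and $\gamma'$ then cobound a rational tangle diagram $\tau$, which contains $v$ by the locality of the push. The pair $\{\gamma, \gamma'\}$ sits inside some maximal bounding pair $\{A, B\}$ whose rational tangle $\tau' \supseteq \tau$ also contains $v$; but rational reduction has removed every crossing from $\tau'$, yielding a contradiction.

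The main obstacle is the case where $e$ and $e'$ are opposite at $v$, lying on a common strand through the crossing. In a reduced, prime diagram, no sub-arc of $\gamma$ near $v$ can connect a point on $e$ to a point on $e'$ within a single face, since no face of $\Gamma(L)$ touches both sides of a single edge; consequently the local-push construction needs more care, and one must push across $v$ together with whichever intervening crossings of $\gamma$ are enclosed near $v$ to produce a bona fide crossing-parallel $\gamma'$. I would verify that $\gamma'$ is simple and represents a distinct cycle in $\Gamma^*(L)$ by the same local face-distinctness argument, and then the remainder of the proof proceeds as in the adjacent case to yield the same contradiction, so $\gamma$ must in fact be prismatic.
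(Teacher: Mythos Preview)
Your treatment of the adjacent case is correct and is exactly what the paper does: if two edges of $\Gamma(L)$ crossed by $\gamma$ meet at a vertex $v$ and are adjacent there, then pushing $\gamma$ across $v$ produces a crossing-parallel $4$-circuit, contradicting rational reducedness.

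The gap is in your ``opposite'' case. Your proposed fix---pushing across $v$ ``together with whichever intervening crossings of $\gamma$ are enclosed near $v$''---does not describe a well-defined crossing-parallel $4$-circuit, and in fact no such local push exists when the two edges are opposite at $v$. The correct resolution is that this case is vacuous. First observe that if $e_i, e_j$ are \emph{consecutive} along $\gamma$ and share $v$, they both border the intermediate face $F_{i+1}$, hence are adjacent at $v$; so the opposite-at-$v$ situation can only occur for \emph{non-consecutive} edges, say $e_1$ and $e_3$. In that case the four faces $F_1,\dots,F_4$ visited by $\gamma$ are forced to be exactly the four faces at $v$. Now a rationally reduced diagram has no bigons (any bigon lies in a rational tangle bounded by a pair of crossing-parallel $4$-circuits and is eliminated), so by primeness any two faces share at most one edge. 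Hence the edge $e_2$ between $F_2$ and $F_3$ must be the unique edge at $v$ between those faces, and likewise for $e_4$; thus $\gamma$ crosses all four edges at $v$ and is the trivial $4$-circuit, contradicting the hypothesis.

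So rather than trying to manufacture a crossing-parallel partner in the opposite case, you should argue that only the adjacent case survives for a nontrivial $\gamma$, after which your (and the paper's) one-line contradiction finishes the proof. The same no-bigons observation also confirms that your pushed curve $\gamma'$ in the adjacent case is genuinely a simple $4$-circuit distinct from $\gamma$, a point worth stating explicitly.
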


\begin{proof}
If not, the $4$-circuit meets edges that share a vertex, so the
$4$-circuit is adjacent to a crossing. But then the diagram is not
rationally reduced because there exists a pair of crossing-parallel
$4$-circuits.
\end{proof}

A combinatorial polyhedron $P$ is a cell complex on $S^2$ that can be
realized as a $3$--dimensional convex polyhedron.  Steinitz proved
that a graph can be realized as the $1$--skeleton of such a convex
polyhedron if and only if the graph is a $3$--connected simple planar
graph~\cite{steinitz}.  A combinatorial polyhedron is realizable as a
right-angled hyperbolic polyhedron if there exists an ideal hyperbolic
polyhedron with the same combinatorial structure as $P$ and with all
dihedral angles $\pi/2$.

\begin{theorem}[Andreev's theorem for 4-valent right-angled ideal polyhedra]\label{Thm:Andreev}
  A 4-valent combinatorial polyhedron 
  admits a realization as a right-angled ideal hyperbolic polyhedron
  if and only if it has no nontrivial $4$-circuits. The realization
  is unique up to isometry of $\HH^3$.
\end{theorem}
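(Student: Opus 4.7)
The plan is to deduce this theorem from the classical Andreev theorem for ideal right-angled hyperbolic polyhedra (see, e.g., Andreev's original papers, or the modern treatments of Rivin--Hodgson and Roeder--Hubbard--Dunbar). The standard formulation states: a $3$-connected simple planar graph $G$ is the $1$-skeleton of a right-angled ideal hyperbolic polyhedron, uniquely up to isometry of $\HH^3$, if and only if (i) $G$ is $4$-valent, (ii) $G$ admits no prismatic $3$-circuit, and (iii) $G$ admits no prismatic $4$-circuit. My task is to reconcile these with the hypotheses of the theorem.

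For the necessity direction, I would first observe that at each ideal vertex the horospherical link is a Euclidean polygon with all interior angles equal to $\pi/2$, forcing it to be a rectangle; hence $P$ is $4$-valent. For the $4$-circuit condition, suppose $\gamma$ is a nontrivial $4$-circuit. The four totally geodesic face planes $P_1, P_2, P_3, P_4$ traversed by $\gamma$ meet consecutively at dihedral angles $\pi/2$. If they were in general position, the four edges $P_i\cap P_{i+1}$ would bound a right-angled hyperbolic quadrilateral whose angle sum is $4\cdot \pi/2 = 2\pi$. But any nondegenerate hyperbolic $n$-gon has angle sum strictly less than $(n-2)\pi$, so the four planes must in fact pass through a common (possibly ideal) point, forcing $\gamma$ to bound a single vertex of $P$ and thus be trivial --- contradiction.

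For the sufficiency direction, I would verify that the hypotheses of Andreev's theorem are met by our combinatorial polyhedron. Conditions (i) and (iii) are immediate from our hypothesis. For (ii), I claim a prismatic $3$-circuit cannot exist in any $4$-valent planar polyhedron: a plane graph admits a proper $2$-coloring of its faces (equivalently, its dual graph is bipartite) if and only if every vertex has even degree, and $4$ is certainly even. Hence the face graph of $P$ is bipartite, containing no odd cycles at all, and in particular no $3$-cycles. Andreev's theorem then produces the desired ideal right-angled realization.

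Uniqueness follows either from Andreev's theorem directly, or by doubling $P$ along each of its (totally geodesic) faces to produce a finite-volume cusped hyperbolic $3$-manifold and invoking Mostow--Prasad rigidity. The substantive content --- and the main obstacle --- is the existence statement inside Andreev's theorem itself, whose modern proofs proceed by a continuity or variational argument in the moduli space of marked polyhedra with the prescribed combinatorial type and dihedral angles, showing that the combinatorial conditions on prismatic circuits are precisely what rule out every possible degeneration.
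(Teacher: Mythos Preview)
Your sufficiency direction is correct and your bipartiteness argument for ruling out prismatic $3$-circuits is clean --- indeed it is essentially a slicker version of the paper's parity argument for Case~1 of condition~(3). The paper cites Atkinson's formulation of Andreev (with four conditions, including a ``three-faces'' condition~(3)) rather than the prismatic-circuit formulation you use, but these are equivalent and your translation for the forward direction is fine.

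The gap is in your necessity argument. You claim that for a nontrivial $4$-circuit $\gamma$ the four edges $P_i\cap P_{i+1}$ ``bound a right-angled hyperbolic quadrilateral.'' They do not: these are four geodesic lines in $\HH^3$, and for a prismatic circuit consecutive ones are disjoint, so there is no quadrilateral to speak of and no angle-sum contradiction. Even granting some version of this argument for prismatic circuits, you would still need to rule out \emph{nontrivial non-prismatic} $4$-circuits --- those where two of the four crossed edges share a vertex $v$ but $\gamma$ is not the link of $v$. Your sentence ``the four planes must pass through a common point'' does not follow and does not handle this case.

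The paper avoids this by citing Atkinson's theorem as an if-and-only-if and then arguing purely combinatorially: a realization forces Atkinson's conditions (1)--(4); condition~(4) kills prismatic $4$-circuits; and a nontrivial non-prismatic $4$-circuit through faces $F_1,F_2,F_3,F_4$ with $F_1,F_2,F_3$ meeting at $v$ produces (via the triple $(F_3,F_4,F_1)$) either a violation of condition~(3) or a further shared vertex, which is then chased down. Your cleanest fix is to do the same: invoke the ``only if'' direction of Andreev/Atkinson rather than attempt a direct geometric argument, and then supply the short combinatorial step showing that conditions~(3) and~(4) together exclude all nontrivial $4$-circuits.
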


\begin{proof}
This special case follows almost immediately from the version of Andreev's theorem given by Atkinson~\cite[Theorem~2.1]{atkinson:volume}.  
Let $P$ be a 4-valent combinatorial polyhedron and let $\Gamma$ denote its 1-skeleton. We step through the necessary and sufficient conditions of that theorem:
\begin{enumerate}
\item $P$ has at least six faces.
\item Every vertex has degree 3 or 4.
\item For any triple of faces of $P$, $(F_i,F_j,F_k)$ such that $F_i\cap F_j$ and $F_j\cap F_k$ are edges of $P$ with distinct endpoints, $F_i\cap F_k = \emptyset$.
\item $P$ has no prismatic $4$-circuits.
\end{enumerate}
Since $P$ is a combinatorial polyhedron, Steinitz's theorem implies
that $\Gamma$ is a 3-connected simple planar graph. Since $\Gamma$
is also 4-valent, it follows from the census of knots and links by
crossing number (see e.g.~\cite{Rolfsen}) that $\Gamma$ has at least 6
vertices.
An Euler characteristic argument implies $P$ must have at least $8$
faces, hence conditions (1) and (2) always hold. 

Now, if $P$ has no nontrivial 4-circuits then (4) holds by \reflem{4circuits}.
We only need to show that (3) is always satisfied.

\begin{figure}
  \includegraphics{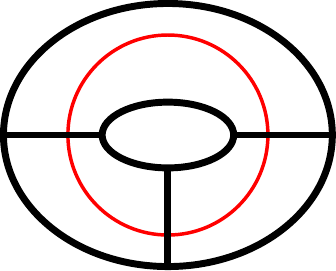}
  \caption{The case that $F_i$ and $F_k$ intersect in an edge.}
  \label{Fig:CaseEdge}
\end{figure}

Let $(F_i,F_j,F_k)$ be a triple of faces such that $F_i\cap F_j$ and $F_j\cap F_k$ are edges of $P$ with disjoint endpoints, and suppose by way of contradiction that $F_i\cap F_k$ is nonempty. 

{\bf Case 1:} $F_i\cap F_k$ contains an edge. Then there exists a
simple closed curve $C$ that intersects exactly these three edges of
$P$, as shown in \reffig{CaseEdge}. 
Let $G$ be the portion of the graph of $P$ in a disk 
bounded by $C$, and let $G'=G\cup C$ be a new graph obtained by adding
three edges and three vertices lying on $C$. Then $G'$ has three
vertices of degree $3$ and all other vertices of degree $4$. This
implies that the sum of all degrees of vertices of $G'$ is an odd
number. But the sum of all degrees of vertices equals twice the number
of edges, which is even. A contradiction. 

{\bf Case 2:} $F_i\cap F_k$ contains a vertex. In this case we can
construct a nontrivial 4-circuit taking the dual edges near the
crossing, contradicting the assumption.

The contradictions in both cases imply that $P$ satisfies condition (3) above, and the result follows from \cite[Theorem~2.1]{atkinson:volume}.

Conversely, suppose $P$ admits a realization as a right-angled ideal
hyperbolic polyhedron, and hence satisfies the four conditions above.
If $P$ has a nontrivial 4-circuit which is not prismatic, then 
using an argument similar to that in \reflem{4circuits},
this would contradict condition (3). Hence, all nontrivial 4-circuits in $P$ 
are prismatic, so then condition (4) implies that there 
are no nontrivial 4-circuits. 
\end{proof}

\begin{lemma}\label{Lem:Andreev}
  Let $\Gamma$ be the 4-valent planar projection graph of a reduced,
  twist-reduced, prime alternating link diagram $K$.  Then $\Gamma$
  admits a realization as a right-angled ideal hyperbolic polyhedron
  if and only if it has no nontrivial $4$-circuits. The
  realization is unique up to isometry of $\HH^3$.
\end{lemma}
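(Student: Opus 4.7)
The plan is to deduce the lemma directly from Theorem \refthm{Andreev}, the 4-valent version of Andreev's theorem proved just above. The strategy is to verify that under the stated hypotheses, the projection graph $\Gamma$ is the 1-skeleton of a 4-valent combinatorial polyhedron; once that is established, Theorem \refthm{Andreev} applies verbatim and delivers both the existence and uniqueness statements.

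First, $\Gamma$ is planar and 4-valent by construction. I would next verify that $\Gamma$ is simple. A multi-edge between two crossings $v_1, v_2$ of $\Gamma$ corresponds to a bigon face in the diagram. The simple closed curve encircling the bigon crosses exactly four edges of $\Gamma$, namely the two non-bigon edges at each of $v_1$ and $v_2$, and thus gives a 4-circuit in $\Gamma^*$ that bounds the two bigon crossings on one side. Once the diagram has at least four crossings, the other side also contains at least two crossings, so this 4-circuit is nontrivial, contradicting the hypothesis. Degenerate small cases are handled directly.

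Next, I would verify that $\Gamma$ is 3-connected. A 1-cut would be a nugatory crossing, excluded by the reduced hypothesis. For a 2-cut $\{v_1, v_2\}$, there is a simple closed separating curve through $v_1$ and $v_2$; after pushing off the vertices, it crosses either two edges of $\Gamma$ (a 2-circuit in $\Gamma^*$, ruled out by primality of the diagram), or four edges of $\Gamma$. In the 4-edge case the resulting 4-circuit in $\Gamma^*$ must be trivial by the hypothesis. But a trivial 4-circuit bounds only a single crossing on one side, which together with the twist-reduced hypothesis forces one of the $v_i$ to not actually be required for the separation, contradicting $\{v_1, v_2\}$ being a genuine 2-cut.

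With these verifications, Steinitz's theorem realizes $\Gamma$ as the 1-skeleton of a 4-valent convex combinatorial polyhedron $P$. Applying Theorem \refthm{Andreev} to $P$ yields the equivalence between realization as a right-angled ideal hyperbolic polyhedron and the absence of nontrivial 4-circuits, together with uniqueness up to isometry of $\HH^3$. The main obstacle is the case analysis for 2-cuts: carefully enumerating the ways a separating curve can cross the four edges at each cut vertex, and using primality together with the twist-reduced hypothesis to eliminate all configurations that would yield a 2-circuit or a trivial 4-circuit in $\Gamma^*$ while still giving a genuine graph-theoretic 2-cut.
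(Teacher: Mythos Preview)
Your approach is essentially the same as the paper's: reduce to \refthm{Andreev} via Steinitz's theorem by checking that $\Gamma$ is simple and 3-connected. The paper organizes the logic contrapositively (if $\Gamma$ is not polyhedral, exhibit a nontrivial 4-circuit), whereas you assume no nontrivial 4-circuits and deduce the Steinitz conditions; these are equivalent, and your more detailed 2-cut case analysis is fine, though the paper does not invoke the twist-reduced hypothesis there. One small point: your concluding sentence says applying \refthm{Andreev} to $P$ ``yields the equivalence'', but you have only established that $P$ is a combinatorial polyhedron under the hypothesis of no nontrivial 4-circuits, so only one implication follows directly; the converse (realizable $\Rightarrow$ no nontrivial 4-circuits) is immediate since any right-angled ideal hyperbolic polyhedron is in particular a combinatorial polyhedron, to which \refthm{Andreev} then applies, but you should say this explicitly.
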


\begin{proof}
If $\Gamma$ is a polyhedral graph, then \refthm{Andreev} implies it is realized as a right-angled ideal hyperbolic polyhedron if and only if it has no nontrivial 4-circuits. It remains to show that if $\Gamma$ is not a polyhedral graph, then it has a nontrivial 4-circuit. 

By Steinitz's theorem, $\Gamma$ is a polyhedral graph if and only if
it is a 3-connected simple planar graph.
Suppose $\Gamma$ is not simple.
The projection graph of a
reduced, twist-reduced, prime alternating link diagram is simple if
and only if it has no bigons. If $\Gamma$ has a bigon, then
$\Gamma$ has a nontrivial 4-circuit encircling the bigon.

Suppose $\Gamma$ is not 3-connected. Then a curve running through the two vertices that disconnect $\Gamma$ can be pushed slightly off those vertices to give a nontrivial 4-circuit, using the fact that the diagram is prime.
\end{proof}

\begin{definition}[cf\ Atkinson~\cite{atkinson:polyhedra}]\label{Def:Split}
Let $P$ be 4-valent planar graph with no bigon regions. If $\gamma$ is a prismatic $4$-circuit for the dual graph $P^*$, we define $P$ \emph{split along} $\gamma$, denoted $P\split \gamma$, as follows: Choose a planar embedding of $P$. Form two new graphs $P_{\text{int}}$ and $P_{\text{ext}}$, where $P_{\text{int}}$ (respectively, $P_{\text{ext}}$) consists of all edges and vertices of $P$ in the bounded (respectively, unbounded) component of $\R^2-\gamma$, such that $P_{\text{int}}$ and $P_{\text{ext}}$ each have four $1$-valent vertices which were incident to $\gamma$. Let $\P_{\text{int}}$ (respectively, $\P_{\text{ext}}$) be the $4$-valent graph obtained by taking the edges from each of the 1-valent vertices, and attaching them to a single vertex chosen to lie in the unbounded (respectively, bounded) region of $\R^2-\gamma$.  Then $P \split \gamma$ consists of the disjoint union of $\P_{\text{int}}$ and $\P_{\text{ext}}$. See \reffig{Prismatic}.
\end{definition}

\begin{figure}
  \begin{center}
    \includegraphics{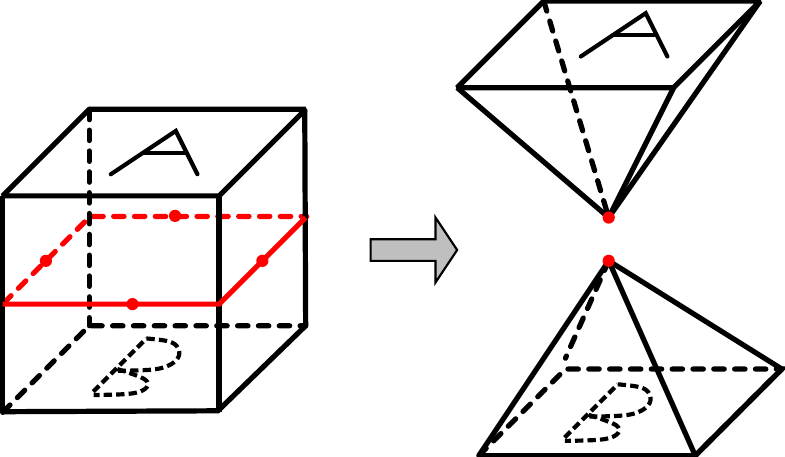}
\caption{A polyhedron $P$ is split along a prismatic $4$-circuit
  $\gamma$ (whose dual $4$-circuit is shown in the center, in red) to obtain $P\split \gamma$, as in \refdef{Split}.}
\label{Fig:Prismatic}
  \end{center}
\end{figure}

\begin{definition}[Andreev polyhedra]\label{Def:AndreevPolyhedra}
Start with a reduced, twist-reduced, prime, alternating link diagram $L$.  Let $\Gamma(L)$ be the projection graph of its rationally reduced diagram.  Split $\Gamma(L)$ iteratively along nontrivial $4$-circuits into graphs that either
\begin{enumerate}
\item [$(i)$] have exactly one vertex,
\item [$(ii)$] have nontrivial $4$-circuits, or
\item [$(iii)$] have no nontrivial $4$-circuits.
\end{enumerate}
We discard all graphs in case $(i)$.  For graphs in case $(ii)$, we repeat this process as needed: Rationally reduce the corresponding link diagram, and then split along nontrivial $4$-circuits as above.  Because each move reduces the number of vertices, the process eventually terminates.  Finally, the only remaining graphs have no nontrivial $4$-circuits.  By \reflem{Andreev}, each such graph admits the structure of a right-angled ideal hyperbolic polyhedron.  The resulting set of right-angled ideal hyperbolic polyhedra are called the \emph{Andreev polyhedra} associated to $L$.
\end{definition}

\begin{theorem}\label{Thm:AndreevEqualsGuts}
  The Andreev polyhedra are identical to the tangle polyhedra and the guts polyhedra. 
\end{theorem}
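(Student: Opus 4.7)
By \refthm{TangleEqualsGuts}, the tangle and guts polyhedra coincide, so it suffices to identify the Andreev polyhedra with the tangle polyhedra. The plan is to match up, step by step, the purely combinatorial operations in \refdef{AndreevPolyhedra} with the topological moves of \refdef{TanglePolyhedra}, and then appeal to \reflem{Andreev} to upgrade combinatorial equality to equality of right-angled polyhedra.

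First I would match \emph{rational reduction} with the removal of rational tangles from the diagram. By \reflem{Parallel}, a maximal bounding pair of parallel $4$-circuits cobounds a rational tangle diagram with one or two boundary components, and these are precisely the trivial tangles of \refdef{TanglePolyhedra}: the tangles with one boundary component collapse to a single crossing (matching the crossing closure), while the tangles with two boundary components contribute to the visible algebraic part and are discarded. Iterating \refdef{RationalReduction} is necessary exactly because, once some rational tangles are collapsed, new $4$-circuits may become parallel, corresponding to larger rational tangles that appear only after the smaller ones are normalized; this is the combinatorial shadow of reducing a tangle diagram to eliminate inessential crossings.

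Next I would show that splitting along a prismatic $4$-circuit $\gamma$ in the sense of \refdef{Split} corresponds precisely to cutting along a visible essential Conway sphere followed by crossing closure. After rational reduction, by \reflem{4circuits}, every nontrivial $4$-circuit is prismatic, hence bounds at least two crossings on each side, hence gives a visible essential Conway sphere in the sense of \refdef{VisibleConwaySphere}. The ``single new vertex'' added on each side in \refdef{Split} is combinatorially identical to the single crossing inserted in \refdef{CrossingClosure}; and the alternating structure determines the sign of that crossing, so the two sides really are reduced alternating tangle closures. Graphs in case $(i)$ of \refdef{AndreevPolyhedra} (one vertex) correspond to rational tangles that reduce to a single crossing, while graphs that persist through further iterations into bigon-chain form are exactly the checkerboard diagrams of elementary tangles from \reffig{ThistlethwaiteElementary}, and are discarded; this matches the discarding of the visible algebraic part.

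Finally, the graphs in case $(iii)$ that survive the termination of the procedure are, by construction, the projection graphs of reduced, prime, alternating diagrams with no nontrivial $4$-circuits, i.e.\ no visible Conway spheres. By the tangle-closure classification used in the proof of \refthm{TangleEqualsGuts}, these are exactly the crossing closures of the non-algebraic tangles appearing in \refdef{TanglePolyhedra}. Their checkerboard polyhedra are the tangle polyhedra, and by \reflem{Andreev} they admit a unique realization as right-angled ideal hyperbolic polyhedra, namely the Andreev polyhedra. I expect the main obstacle to be the bookkeeping for the iteration: one must verify that alternation, primeness, and twist-reducedness are preserved after each split-and-reduce step, and that the two iterative procedures (Thistlethwaite's one-shot decomposition along essential visible Conway spheres versus the repeated prismatic splits of \refdef{AndreevPolyhedra}) genuinely terminate at the same collection of diagrams; the subtle case is the hollow elementary tangle, whose $4$-circuit structure is only exposed after interior rational tangles have been collapsed, so one must confirm that the iteration catches it and discards it before any of its descendants are mistakenly promoted to Andreev polyhedra.
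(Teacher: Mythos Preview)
Your proposal is correct and follows essentially the same route as the paper: reduce to Andreev $=$ tangle via \refthm{TangleEqualsGuts}, use \reflem{Parallel} to identify parallel $4$-circuits with visible Conway spheres, match the split of \refdef{Split} with cutting along a Conway sphere plus crossing closure (the new vertex $=$ the inserted crossing), and handle the base case (no nontrivial $4$-circuits $=$ no visible Conway sphere) directly. The paper organizes this as an induction on the number of nontrivial $4$-circuits rather than a step-by-step matching of operations, and explicitly flags the one place the two procedures differ in order of operations (rational reduction before versus after splitting), but this is exactly the bookkeeping issue you anticipate in your final paragraph.
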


\begin{proof}
Let $D$ be a reduced, twist-reduced, prime, alternating link diagram.
We will prove that Definitions~\ref{Def:TanglePolyhedra}
and~\ref{Def:AndreevPolyhedra} agree for $D$.  Rational reduction, as
in \refdef{RationalReduction}, repeatedly replaces all rational tangle
diagrams with one boundary component with a single crossing, and
removes all crossings in a rational tangle diagram with two boundary
components.  A visibly algebraic link diagram $D$ is rationally
reduced to a $1$-crossing diagram, which is discarded in both cases.

We proceed by induction on nontrivial $4$-circuits in $\Gamma(D)$.
By \reflem{Parallel}, every set of parallel nontrivial $4$-circuits
corresponds to a visible Conway sphere.  Hence, $D$ admits no visible
Conway sphere if and only if $\Gamma(D)$ has no nontrivial
$4$-circuits.  In this case, the two checkerboard
polyhedra for $D$ are both its Andreev polyhedra and its tangle
polyhedra.

Proceeding inductively, for Andreev polyhedra, we split $\Gamma(D)$
along a nontrivial $4$-circuit, and obtain either right-angled
polyhedral graphs or graphs with fewer nontrivial $4$-circuits that
are then rationally reduced.  In the latter case, the equivalence with
tangle polyhedra follows by the induction hypothesis.  In the former
case, we claim that these right-angled polyhedra are also tangle
polyhedra for $D$.

To obtain tangle polyhedra, we cut $D$ along visible Conway spheres,
remove the visible algebraic part and inessential crossings, use
crossing closures to form reduced alternating diagrams, and then take
their checkerboard polyhedra.  The crossing closure makes each
checkerboard polyhedron have an ideal vertex exactly where we cut
along the visible Conway sphere, which is the same as splitting
$\Gamma$ along the corresponding nontrivial $4$-circuit.  The only
difference is when rational reduction occurs: for tangle polyhedra,
the tangles are reduced before taking their crossing closures; for
Andreev polyhedra, we split and rationally reduce each alternating
link diagram repeatedly, as needed, discarding rationally reduced
unknots.  In both cases, what remains are reduced alternating diagrams
obtained by taking the crossing closure of each non-algebraic tangle
cut along nontrivial $4$-circuits.  Thus, the resulting combinatorial
polyhedra are identical.

By \refthm{TangleEqualsGuts}, these are also the same as the guts polyhedra.
\end{proof}

\section{Right-angled volume}\label{Sec:volume}

In this section, we use the guts/tangle/Andreev polyhedra to define a
new geometric link invariant, which gives a lower bound on the volume
of the link.

\begin{definition}\label{Def:VolP}
Let $L$ be a link with a reduced, twist-reduced, prime alternating diagram. The \emph{right-angled volume} $\volp(L)$ is defined to be twice the sum of the volumes of the right-angled guts polyhedra, or equivalently by \refthm{AndreevEqualsGuts}, twice the sum of the volumes of the tangle polyhedra, or twice the sum of the volumes of the Andreev polyhedra.  If the set of such polyhedra is empty, we define $\volp(L)=0$.
\end{definition}

For example, if $L$ denotes the standard alternating diagram of the Borromean rings, the guts polyhedra consists of a single regular ideal octahedron. Therefore $\volp(L) = 2\voct$, where $\voct\approx 3.66386$ is the volume of the regular ideal octahedron.

On the other hand, if $L$ is any visibly algebraic link, as in  Example~\ref{Ex:TanglePolyhedra}\,(1), $\volp(L)=0$. 

The three equivalent definitions of $\volp(L)$ imply different properties:  Using tangle polyhedra we prove the invariance of $\volp(L)$ (\refthm{volp-invariant} below); using guts
polyhedra we prove the lower bound for the volume of $S^3-L$ (\refthm{GutsVolumeBound} below); and using Andreev polyhedra we provide a diagrammatic expression for $\volp(L)$ (\refthm{volp-tangles} below).

\begin{theorem}\label{Thm:volp-invariant}
If $L$ is any prime alternating link, then $\volp(L)$ is a link invariant.
\end{theorem}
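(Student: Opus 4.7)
The plan is to combine \refthm{LinkInvariant} with \refthm{Andreev} (Andreev's theorem), after first verifying that the definition is independent of the extra hypothesis of twist-reduction. Since \refdef{VolP} is phrased using tangle polyhedra (via the equivalence of \refthm{AndreevEqualsGuts}), this is the natural definition to work with here.

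First I would check that starting from any reduced, prime alternating diagram of $L$, we can pass to a reduced, twist-reduced, prime alternating diagram of $L$ without changing the tangle polyhedra. A twist region in a diagram is a rational tangle; by \refdef{TanglePolyhedra}, any rational tangle is absorbed into the visible algebraic part (or, at worst, its associated inessential crossings are removed at the reduction step of \refdef{TanglePolyhedra}). Hence the collection of non-algebraic alternating tangle diagrams used to build the tangle polyhedra is unaffected by whether the starting diagram is twist-reduced or merely reduced. In particular, the tangle polyhedra defined by \refdef{TanglePolyhedra} for any reduced, prime alternating diagram agree with those for any twist-reduction of that diagram.

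Next I would apply \refthm{LinkInvariant} directly: for any prime alternating link $L$, the combinatorial tangle polyhedra depend only on $L$ and not on the chosen reduced, prime alternating diagram, since the Tait flyping theorem of Menasco--Thistlethwaite reduces diagram-independence to flype-invariance, which is established in \refthm{LinkInvariant}. Then, by \refthm{Andreev}, each right-angled ideal hyperbolic polyhedron is uniquely determined up to isometry of $\HH^3$ by its combinatorial type, so it carries a well-defined hyperbolic volume. Therefore the sum $\sum_i \vol(P_i)$ over the tangle polyhedra $P_i$ is a well-defined function of $L$, and $\volp(L) = 2\sum_i \vol(P_i)$ is a link invariant.

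The main obstacle I anticipate is bookkeeping at the twist-reduction step, since the statement of \refthm{LinkInvariant} is phrased for reduced (not necessarily twist-reduced) diagrams while \refdef{VolP} assumes twist-reduction. The plan is to observe that this is purely cosmetic: twist-reduction corresponds to contracting twist regions, which are rational, and rational tangles are discarded (or collapsed) by the construction of tangle polyhedra in \refdef{TanglePolyhedra}. Once this observation is in place, the result follows formally from \refthm{LinkInvariant} and \refthm{Andreev}.
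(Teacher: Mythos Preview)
Your proposal is correct and follows essentially the same approach as the paper: invoke \refthm{LinkInvariant} for the invariance of the tangle polyhedra and \refthm{Andreev} for the uniqueness of their right-angled hyperbolic volumes. Your extra discussion of twist-reduction is reasonable bookkeeping that the paper simply omits, since \refdef{TanglePolyhedra} and \refthm{LinkInvariant} are already stated for merely reduced (not necessarily twist-reduced) diagrams.
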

\begin{proof}
  Using the tangle polyhedra of $L$ to obtain $\volp(L)$, invariance of the polyhedra follows from \refthm{LinkInvariant}.
  The uniqueness of their volume follows from \refthm{Andreev}.
\end{proof}

\begin{theorem}\label{Thm:GutsVolumeBound}
For any hyperbolic alternating link $L$ with hyperbolic volume $\vol(L)$,
\[ \volp(L) \:\leq\: \vol(L). \]
\end{theorem}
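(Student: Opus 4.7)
The plan is to apply the Agol--Storm--Thurston volume inequality \cite{AgolStormThurston} twice, following the chain of cuts and doubles used to build the guts polyhedra in \refsec{geometry}, and tracking carefully the factors of two introduced by each doubling. Recall that Agol--Storm--Thurston asserts that for a complete finite-volume hyperbolic $3$-manifold $N$ and an incompressible properly embedded surface $\Sigma\subset N$, the guts $\guts(N\cut\Sigma)$ carries its unique hyperbolic structure with totally geodesic boundary and satisfies $\vol(N)\geq \vol(\guts(N\cut\Sigma))$.

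First I would apply this with $N=S^3-L$ and $\Sigma=B$, which is incompressible. This gives $\vol(L)\geq \vol(M_B)$. Doubling along the totally geodesic boundary produces a complete finite-volume hyperbolic manifold $DM_B$ with $\vol(DM_B)=2\vol(M_B)\leq 2\vol(L)$. By \cite[Lemma~4.8]{CKPgmax}, already used above, the doubled surface $DR$ is essential in $DM_B$, so a second application of Agol--Storm--Thurston yields $\vol(DM_B)\geq \vol(\guts(DM_B\cut DR))$. A second doubling along totally geodesic boundary then gives
\[ \vol\bigl(D(\guts(DM_B\cut DR))\bigr)\:=\:2\vol(\guts(DM_B\cut DR))\:\leq\: 2\vol(DM_B)\:\leq\: 4\vol(L). \]

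The remaining task is to identify the leftmost volume with $4\volp(L)$. By \refprop{GutsHomeomorphisms}, the manifold $D(\guts(DM_B\cut DR))$ is built from eight copies of the guts polyhedra, and by \refthm{RightAngledGuts} each copy inherits a right-angled ideal hyperbolic structure from the complete hyperbolic structure on $D(\guts(DM_B\cut DR))$. By Mostow--Prasad rigidity, this structure agrees with the unique right-angled realization guaranteed by \refthm{Andreev}, so the volume of the doubled manifold equals eight times the sum of the volumes of the guts polyhedra. Since by \refdef{VolP} the invariant $\volp(L)$ is twice this sum, we get $\vol(D(\guts(DM_B\cut DR)))=4\volp(L)$. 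Combining with the previous chain gives $4\volp(L)\leq 4\vol(L)$, and the theorem follows.

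The main obstacle is not the inequality chain itself, which is essentially bookkeeping of doubling factors, but rather verifying that Agol--Storm--Thurston genuinely applies at each stage: the ambient spaces must carry finite-volume hyperbolic structures and the cut surfaces must be incompressible. The first application uses only the well-known essentiality of the checkerboard surface; the second relies on the fact that $M_B$ being hyperbolic with totally geodesic boundary ensures $DM_B$ is a complete finite-volume hyperbolic manifold, together with the essentiality of $DR\subset DM_B$ from \cite[Lemma~4.8]{CKPgmax}. Both of these ingredients have already been established and used in \refsec{geometry}.
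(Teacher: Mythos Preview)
Your argument is correct and follows essentially the same route as the paper: two applications of Agol--Storm--Thurston along $B$ and then $DR$, followed by the identification of $D(\guts(D(M_B)\cut DR))$ with eight copies of the guts polyhedra. The only cosmetic difference is that the paper quotes the inequality in its doubled form $\vol(N)\geq \tfrac12\vol(D(\guts(N\cut\Sigma)))$ and carries the factors of $\tfrac12$ through directly, whereas you use the equivalent undoubled form $\vol(N)\geq\vol(\guts(N\cut\Sigma))$ and then double; the resulting chain of inequalities is identical.
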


\begin{proof}
Here, we consider $\volp(L)$ in terms of the guts polyhedra of $L$.
For a reduced, prime alternating diagram of $L$, the guts polyhedra inherit a hyperbolic structure with geodesic faces meeting at right angles, by \refthm{RightAngledGuts}.

A theorem of Agol, Storm, and Thurston \cite[Theorem~9.1]{AgolStormThurston}, states that for a hyperbolic 3-manifold $N$ with embedded $\pi_1$-injective surface $\Sigma$,
\[ \vol(N) \: \geq \: \half \vtet || D(N\cut\Sigma)|| \: = \: \half\vol(D(\guts(N\cut\Sigma))). \] 
Here, $\vtet$ is the volume of a regular ideal tetrahedron; $||\cdot||$ denotes Gromov norm; \cite[Theorem~9.1]{AgolStormThurston} gives the inequality; and the equality is the definition of the Gromov norm. 

We apply this result twice, to surfaces $B$ and $DR$, which implies
\[ \vol(L) \geq \half \vol(D(\guts((S^3-L)\cut B))) = \half\vol(D(M_B)) \geq \frac{1}{4}\vol(D(\guts(D(M_B)\cut DR))). \]
The manifold $D(\guts(D(M_B)\cut DR))$ is built by gluing eight copies of the guts polyhedra, glued by the identity along red and blue faces. Thus this gives the desired result. 
\end{proof}

\begin{theorem}\label{Thm:volp-tangles}
Let $L$ be a prime alternating link, given by a reduced, twist-reduced, prime alternating diagram.
Let $K$ be its rationally reduced diagram, as in \refdef{RationalReduction}.
\begin{enumerate}  
\item If $K$ has five or fewer crossings, then $\volp(K)=0$.
\item If $K$ admits no nontrivial $4$-circuits, then $\volp(K)=2\,\vol(P(K))$, where $P(K)$ is the checkerboard polyhedron for $K$, with a right-angled ideal hyperbolic structure.
\item Otherwise, split $K$ along nontrivial $4$-circuits to obtain a set of alternating tangles whose crossing closures form alternating link diagrams $K_i$.  Repeatedly, as needed, rationally reduce each $K_i$ and apply steps $(1)-(3)$.  Let $\{T_i\}$ be the resulting set of reduced non-algebraic tangles, and let $T_i^{\times}$ denote the crossing closure of $T_i$, as in \reffig{CrossingClosure}.  Then
  \[ \volp(L) = \sum\nolimits_i \volp(T_i^{\times}). \]
\end{enumerate}
\end{theorem}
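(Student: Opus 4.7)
The plan is to unpack $\volp$ via the Andreev polyhedra formulation (\refdef{VolP} combined with \refthm{AndreevEqualsGuts}), writing $\volp(L) = 2\sum_P \vol(P)$ where $P$ ranges over the Andreev polyhedra of $L$, and then to verify each of (1), (2), (3) against the recursive construction in \refdef{AndreevPolyhedra}.

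For (2), the hypothesis that $\Gamma(K)$ admits no nontrivial $4$-circuit places us in case (iii) of \refdef{AndreevPolyhedra} immediately: the single Andreev polyhedron is $P(K)$, realized as a right-angled ideal hyperbolic polyhedron by \reflem{Andreev}. The identity $\volp(L) = 2\vol(P(K))$ then follows directly from \refdef{VolP}.

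For (1), I would show that no Andreev polyhedra arise when $K$ has $V\leq 5$ crossings. A prismatic $4$-circuit requires four pairwise distinct endpoints on each side, hence $V\geq 8$; combined with \reflem{4circuits} this forces a rationally reduced $K$ with $V\leq 5$ to have no nontrivial $4$-circuits at all. A quick degree/planarity count further shows that a simple $4$-valent planar graph requires $V=0$ or $V\geq 6$ (since $K_4$ has no degree-four vertex, $K_5$ is non-planar, and the octahedron is the smallest realization). Thus iterating \refdef{RationalReduction} on any reduced, prime, alternating link with $V\leq 5$ (each of which is $2$-bridge, hence visibly algebraic) collapses the diagram to $V\in\{0,1\}$. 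In the $V=0$ case the polyhedron set is empty, and in $V=1$ the single-vertex graph is discarded by case (i) of \refdef{AndreevPolyhedra}. Either way $\volp(L)=0$.

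For (3), I would induct on the crossing number of $K$. If $\Gamma(K)$ has a nontrivial $4$-circuit $\gamma$, then \refdef{Split} cuts $\Gamma(K)$ into two strictly smaller graphs, each carrying one new ideal vertex. By the splitting/crossing-closure correspondence already established in the proof of \refthm{AndreevEqualsGuts}, each of these resulting graphs is the projection graph of the crossing closure of the alternating tangle on the corresponding side of $\gamma$. Iterating the splitting/rational-reduction loop until every branch falls into case (i) or case (iii) produces the finite collection $\{T_i\}$ of terminal reduced non-algebraic tangles, and the Andreev polyhedra of $K$ decompose as a disjoint union indexed by these $T_i^{\times}$. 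Summing via \refdef{VolP} yields
\[ \volp(L) \;=\; 2\sum_i\sum_{P\in\mathcal{A}(T_i^{\times})}\vol(P) \;=\; \sum_i \volp(T_i^{\times}), \]
where $\mathcal{A}(T_i^{\times})$ denotes the Andreev polyhedra associated to $T_i^{\times}$.

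The trickiest step, I expect, is the combinatorial claim in (1): verifying that iterating \refdef{RationalReduction} truly collapses every reduced, prime, alternating link with five or fewer crossings down to $V\leq 1$, which rests on the fact that all such small alternating links are $2$-bridge. Once (1) and (2) are in hand, part (3) is routine bookkeeping built on the split/closure equivalence already exploited in \refthm{AndreevEqualsGuts}.
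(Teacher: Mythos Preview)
Your proposal is correct and follows essentially the same route as the paper: part~(2) via \reflem{Andreev}, part~(3) via the recursive Andreev construction and the split/closure correspondence from the proof of \refthm{AndreevEqualsGuts}, and part~(1) via the fact that reduced prime alternating diagrams with at most five crossings are visibly algebraic. The paper's own argument for~(1) is a single line invoking that last fact directly; your detour through prismatic $4$-circuits and the $V\geq 6$ count for simple $4$-valent planar graphs is correct but unnecessary once you appeal to the $2$-bridge classification anyway.
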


\begin{proof}
For (1), any alternating diagram with five or fewer crossings is visibly algebraic.

For (2), the claim follows by \reflem{Andreev}.  In this case, the two
checkerboard ideal polyhedra $P(K)$ are exactly the tangle polyhedra
of $K$.

For (3), we follow the procedure in \refdef{AndreevPolyhedra}.  By the
proof of \refthm{AndreevEqualsGuts}, the $T_i$ are the tangles whose
crossing closures form the tangle polyhedra.  The inductive proof
gives a way to find the tangle diagrams starting from the link diagram
$L$ by repeated rational reduction and splitting.
\end{proof}  

\begin{figure}
  \begin{center}
    \includegraphics{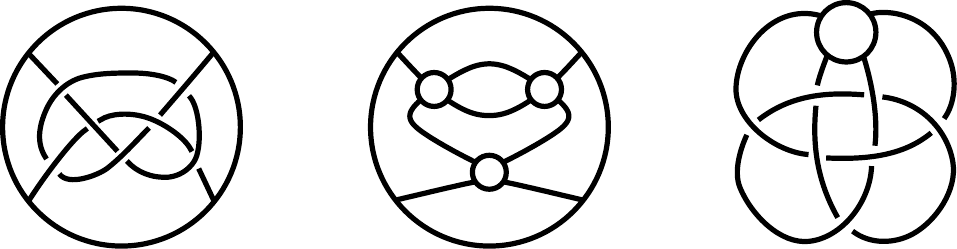}
\caption{The tangles $\tau_1,\ \tau_2,\ \tau_3$ used in Example~\ref{Ex:Tangle-insert}.}
\label{Fig:Tangle-insert-fig}
  \end{center}
\end{figure}

\begin{example}\label{Ex:Borromean}
For the Borromean link $L$, its reduced alternating diagram satisfies condition (2) of \refthm{volp-tangles}.  Hence, $\volp(L)=\vol(L)$.
\end{example}

\begin{example}\label{Ex:Tangle-insert}
Let $\tau_1,\ \tau_2,\ \tau_3$ be the tangles shown in
\reffig{Tangle-insert-fig}.  Let $K$ be the alternating link obtained
by inserting $\tau_1$ into each of the three inner boundary components
of $\tau_2$, and then inserting the resulting tangle into the boundary
of $\tau_3$.  Note $K$ is rationally reduced.

Using \refthm{volp-tangles}, we compute $\volp(K)$.  Following step~(3), we repeatedly split $K$ into alternating diagrams.  For both
$\tau_1$ and $\tau_3$, the crossing closure is the knot $8_{18}$, and
the repeated crossing closure of $\tau_2$ is the figure-eight knot.
Thus we obtain alternating links $K_i$, for \mbox{$1\leq i\leq 5$}, such that four of the $K_i$ are diagrams of $8_{18}$, and the other one is the figure-eight knot diagram.  The figure-eight knot is visibly algebraic, and can be rationally reduced to an unknot.
Hence we obtain four crossing closures of tangles $T_i^{\times}$ in step~(3) of \refthm{volp-tangles}, and each of these four is the knot $8_{18}$. 
Thus $\volp(K)=4\,\volp(8_{18})$. We compute $\volp(8_{18})$ exactly in
Example~\ref{Ex:8_18} below.
\end{example}

\subsection{Computing $\volp(L)$}
We now provide an explicit algorithm to compute $\volp(L)$ from a
reduced, prime, alternating diagram of $L$.  Applying
\refthm{volp-tangles}, we get a set of reduced non-algebraic tangles
$T_i$. We can then apply \refthm{RightAngledKites}, below, to
explicitly compute each $\volp(T_i^{\times})$, and hence $\volp(L)$.

We will compute $\volp(T_i^{\times})$ by dividing right-angled polyhedra into well-understood pieces. \reffig{Orthoscheme} shows one such piece, which is a 3/4-ideal tetrahedron with one vertex at $\infty$, the other two ideal vertices on the boundary of the same hemisphere on $\CC\subset\bdy\HH^3$, and the finite vertex at the point with maximum Euclidean height on that hemisphere. This is the double of what Schl\"afli called an \emph{orthoscheme}.

\begin{figure}
  \includegraphics{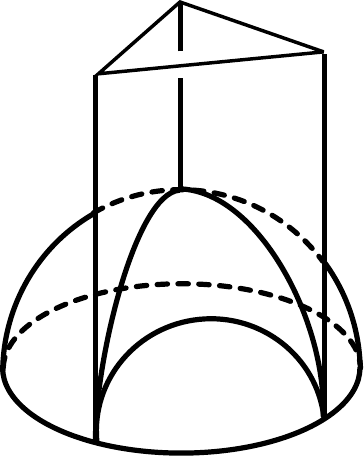}
  \caption{A $3/4$-ideal tetrahedron with ideal vertices at $\infty$, on the boundary of a hemisphere, and with finite vertex at the Euclidean maximum of that hemisphere.}
  \label{Fig:Orthoscheme}
\end{figure}

In \cite{milnor}, Milnor computed the volume of the $3/4$-ideal tetrahedron $\T$ of \reffig{Orthoscheme}. Let $\theta$ denote the dihedral angle between the two vertical faces of $\T$ that meet at the vertical edge lying over the finite vertex. By Milnor's calculation, the volume $\vol(\T)=\Lambda(\theta/2)$, where $\Lambda(\theta)$ is the Lobachevsky function:
\[
\Lambda(\theta)= - \int_{0}^{\theta} \log|2\sin t| \; dt.
\]

\begin{figure}
  \import{figures/}{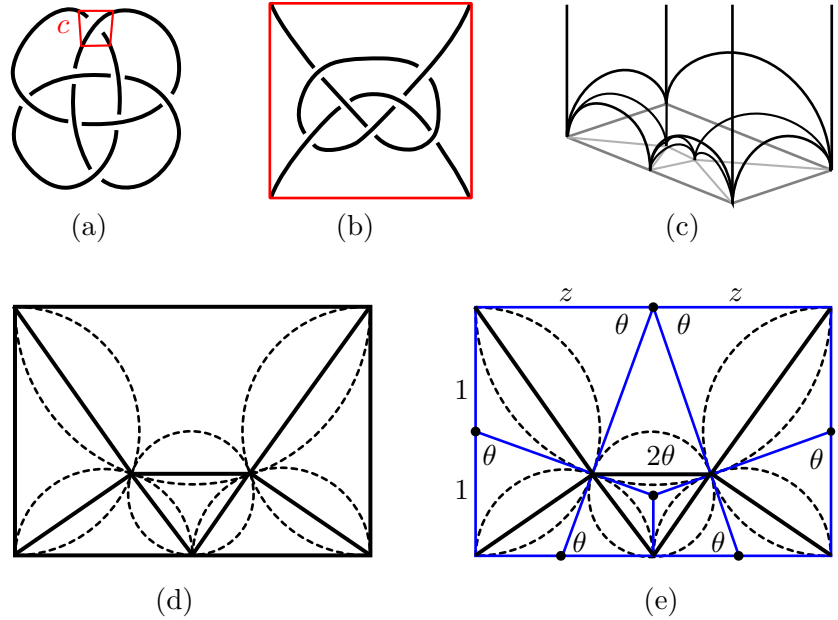}
  \caption{Decomposition of the right-angled polyhedron associated to $8_{18}$:
    (a)~Crossing $c$ is chosen to be the ideal vertex at $\infty$.
    (b)~Diagram now with $c$ at infinity.
    (c)~Geometry of right-angled polyhedron, with faces in $\HH^3$ and~$c$ at~$\infty$.
    (d)~Circles making up faces of the polyhedron, lying in a rectangle.
    (e)~Circles plus projection of edges of the $3/4$-ideal tetrahedra.}  
\label{Fig:8_18}
\end{figure}

\begin{example}\label{Ex:8_18}
  We compute $\volp(L)$ for $L$ the alternating knot $8_{18}$. The process is illustrated in \reffig{8_18}.
  Note that the reduced alternating diagram of $L$ is already rationally reduced with no nontrivial 4-circuits. 

In \reffig{8_18}(a), we choose a crossing $c$, or alternatively consider $c$ as an ideal vertex of the tangle polyhedron, and take this point to infinity. This is shown in \reffig{8_18}(b).

Because the diagram is rationally reduced, we know the tangle polyhedron admits a geodesic right-angled hyperbolic structure. Each of the faces shown in \reffig{8_18}(b) will be totaly geodesic in this structure, hence faces define lines and circles on $\bdy\HH^3$. The polyhedron is shown in 3-dimensions in \reffig{8_18}(c), and circles on $\bdy\HH^3$ corresponding to the geodesic faces, viewed from the point $\infty$, are shown in \reffig{8_18}(d).

Subdivide the picture into a collection of $3/4$-ideal tetrahedra by adding a vertical edge running from the center of each circle to infinity, and adding faces from this edge to the ideal vertices. The result is shown in \reffig{8_18}(e).

Now use symmetry and trigonometry to compute the angles. In this case, six of the angles are the same value $\theta$, six others are $\pi-\theta$, one has angle $2\theta$ and one has angle $\pi-2\theta$. If we set the size of the rectangle in \reffig{8_18}(d) to be $2\times 2z$ then $\tan(\theta/2) = 1/z = z/2$. Then $\tan(\theta/2) = 1/\sqrt{2}$, and $\volp(L)\approx 12.0461$.
\end{example}

A \emph{right kite} is a kite with two right angles.
The geometric data encoded in
the rectangle $R(c)$ of \reffig{8_18}(e) is completely determined by
the tiling of $R(c)$ by right kites, shown in blue lines in
\reffig{8_18}(e).  This follows because the projection of all the
$3/4$-ideal tetrahedra to $\bdy\HH^3$ gives a rectangle tiled by
isosceles triangles, which meet in pairs across the edges of the
diagram to form right kites.  In each kite, right angles are dihedral angles of the right-angled polyhedron; the remaining kite
angles are of the form $\theta$ and $\pi-\theta$. See \reffig{Kite}.

\begin{figure}
\begingroup%
  \makeatletter%
  \providecommand\color[2][]{%
    \errmessage{(Inkscape) Color is used for the text in Inkscape, but the package 'color.sty' is not loaded}%
    \renewcommand\color[2][]{}%
  }%
  \providecommand\transparent[1]{%
    \errmessage{(Inkscape) Transparency is used (non-zero) for the text in Inkscape, but the package 'transparent.sty' is not loaded}%
    \renewcommand\transparent[1]{}%
  }%
  \providecommand\rotatebox[2]{#2}%
  \newcommand*\fsize{\dimexpr\f@size pt\relax}%
  \newcommand*\lineheight[1]{\fontsize{\fsize}{#1\fsize}\selectfont}%
  \ifx\svgwidth\undefined%
    \setlength{\unitlength}{288bp}%
    \ifx\svgscale\undefined%
      \relax%
    \else%
      \setlength{\unitlength}{\unitlength * \real{\svgscale}}%
    \fi%
  \else%
    \setlength{\unitlength}{\svgwidth}%
  \fi%
  \global\let\svgwidth\undefined%
  \global\let\svgscale\undefined%
  \makeatother%
  \begin{picture}(1,0.4375)%
    \lineheight{1}%
    \setlength\tabcolsep{0pt}%
    \put(0,0){\includegraphics[width=\unitlength,page=1]{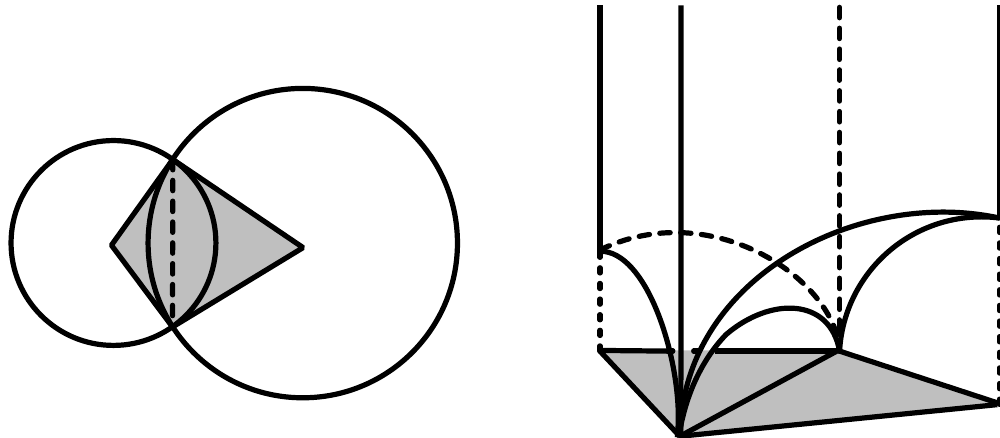}}%
    \put(0.3115731,0.18160751){\color[rgb]{0,0,0}\makebox(0,0)[lt]{\lineheight{1.25}\smash{\begin{tabular}[t]{l}$\pi-\theta$\end{tabular}}}}%
    \put(0.07832362,0.18677169){\color[rgb]{0,0,0}\makebox(0,0)[lt]{\lineheight{1.25}\smash{\begin{tabular}[t]{l}$\theta$\end{tabular}}}}%
  \end{picture}%
\endgroup%

\caption{Left: A right kite formed by radii of intersecting circles, meeting at the dashed edge $e$ shown as the short diagonal. The radii meet at right angles; the other two angles of the kite are $\theta$ and $\pi-\theta$. 
  Right: An ideal hyperbolic polyhedron is bounded by vertical planes and intersecting hemispheres above a kite, which consists of two $3/4$-ideal tetrahedra.}
\label{Fig:Kite}
\end{figure}

Therefore, we can view the procedure described in
Example~\ref{Ex:8_18} as a geometric realization problem.  Namely, the
geometric \reffig{8_18}(d) is the realization of its combinatorial
graph, shown with solid lines in \reffig{8_18}(d), which can be
obtained directly from the diagram in \reffig{8_18}(b).
We generalize this procedure below, showing that for appropriate prime
alternating links, the corresponding combinatorial graph can be
geometrically realized.

The \emph{central triangulation} of a face of a plane graph is obtained by adding a central vertex to the face, and edges joining the central vertex to all other vertices, triangulating the face. 

\begin{theorem}\label{Thm:RightAngledKites}
Let $L$ be a prime alternating link, whose link diagram is already
rationally reduced and has no nontrivial $4$-circuits.  Let
$\Gamma(L)$ be the projection graph of the link diagram.  Fix any
crossing $c$ of $L$, and let $\F(c)$ be the closure of the four faces
of $\Gamma(L)$ which meet $c$.  Let $G$ be the graph obtained by
taking the central triangulation of each face of $\Gamma(L)$ that does
not meet $c$, excluding the edges in $\Gamma(L)$. Then $G$ can be realized
as a Euclidean rectangle tiled by right kites, with one kite $k_e$ for
each edge $e$ of $\Gamma(L)$ not in $\F(c)$. Let $\theta_e$ and
$\pi-\theta_e$ denote the other kite angles of $k_e$.  Then
\[ \volp(L) = 2\sum\nolimits_{e\in \Gamma(L)\setminus\F(c)}\Lambda(\theta_e/2) + \Lambda((\pi-\theta_e)/2).
\]
\end{theorem}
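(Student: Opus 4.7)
The plan is to reduce the volume computation to Milnor's formula for the 3/4-ideal tetrahedron by explicitly decomposing the right-angled checkerboard polyhedron $P(L)$ into such tetrahedra.

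First, I would invoke \reflem{Andreev} to realize $P(L)$ in $\HH^3$ as the unique right-angled ideal hyperbolic polyhedron with the combinatorics of $\Gamma(L)$. Working in the upper half-space model, I send the ideal vertex corresponding to $c$ to $\infty$. The four faces in $\F(c)$ meet at $c$ pairwise orthogonally and so become vertical Euclidean half-planes bounding a Euclidean rectangle $R(c)$ on $\CC=\bdy_\infty\HH^3\setminus\{\infty\}$; the remaining faces of $P(L)$ lie on hemispheres whose boundary circles fill out $R(c)$. Right-angledness of $P(L)$ translates into orthogonal intersection of adjacent circles on $\CC$.

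For each face $F\notin\F(c)$, with hemisphere center $O_F\in\CC$ and radius $r_F$, the ideal vertices of $F$ lie on the circle of radius $r_F$ about $O_F$, so each central-triangulation triangle $(O_F,v_i,v_{i+1})$ is isosceles. For any edge $e=v_iv_{i+1}\in\Gamma(L)\setminus\F(c)$ shared with an adjacent face $F'$, the hemispheres of $F$ and $F'$ meet orthogonally at $v_i$ and $v_{i+1}$, which forces the Euclidean quadrilateral $k_e:=O_Fv_iO_{F'}v_{i+1}$ to be a right kite with right angles at $v_i,v_{i+1}$ and opposite angles $\theta_e$ at $O_F$ and $\pi-\theta_e$ at $O_{F'}$ (see \reffig{Kite}). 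I would then verify that as $F$ ranges over all faces not in $\F(c)$, the nondegenerate isosceles triangles from their central triangulations pair up across each $e\in\Gamma(L)\setminus\F(c)$ into the kites $k_e$, tiling $R(c)$. The key geometric input is that orthogonality between a hemisphere and an adjacent vertical plane $F'\in\F(c)$ forces $O_F$ to lie on the side of $R(c)$ spanned by $F'$; this causes the triangle corresponding to an edge $e\in\F(c)$ to degenerate to a segment, so only edges of $\Gamma(L)\setminus\F(c)$ contribute genuine half-kites and these are in bijection with the kites.

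The hyperbolic region above each right kite $k_e$ in $\HH^3$, bounded by the two hemispheres below, the four vertical half-planes over the kite's sides, and extending up to $\infty$, is the union of two 3/4-ideal tetrahedra split along the vertical geodesic plane over the short diagonal $e$. Each such tetrahedron is congruent to the $\T$ of \reffig{Orthoscheme} with dihedral angle $\theta_e$ (respectively $\pi-\theta_e$) at its vertical finite edge; by Milnor's formula its volume is $\Lambda(\theta_e/2)$ (respectively $\Lambda((\pi-\theta_e)/2)$). Since these pyramids tile $P(L)$, summing $\Lambda(\theta_e/2)+\Lambda((\pi-\theta_e)/2)$ over $e\in\Gamma(L)\setminus\F(c)$ yields $\vol(P(L))$, and the factor of two in \refdef{VolP} produces the stated formula for $\volp(L)$.

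The main obstacle is the tiling claim: proving that the circumcenters $O_F$, dictated by the unique hyperbolic realization, produce a genuine non-overlapping kite decomposition of $R(c)$ rather than, say, positioning some $O_F$ outside its face or causing kites to overlap. The orthogonality-forces-center-on-side observation handles faces adjacent to $\F(c)$, while the interior tiling follows from the incidence relations among hemispheres in $P(L)$; the hypothesis that $\Gamma(L)$ has no nontrivial $4$-circuits ensures $P(L)$ really is right-angled by \reflem{Andreev} and prevents degenerate collapses of the combinatorial kite structure of $G$.
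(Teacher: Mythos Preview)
Your proposal is correct and follows essentially the same route as the paper: place $c$ at $\infty$ via \reflem{Andreev}, read off the orthogonal circle pattern in the rectangle $R(c)$, identify the central-triangulation graph $G$ with the kite graph coming from circle centers and intersection points, and apply Milnor's formula to the two $3/4$-ideal tetrahedra over each kite. Your treatment is in fact slightly more explicit than the paper's about why edges in $\F(c)$ contribute degenerate triangles and about the potential overlap issue in the tiling claim, but the underlying argument is the same.
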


We illustrate the graphs in \refthm{RightAngledKites} for the knot $8_{18}$ in \reffig{8_18-2}.

\begin{figure}
  \includegraphics{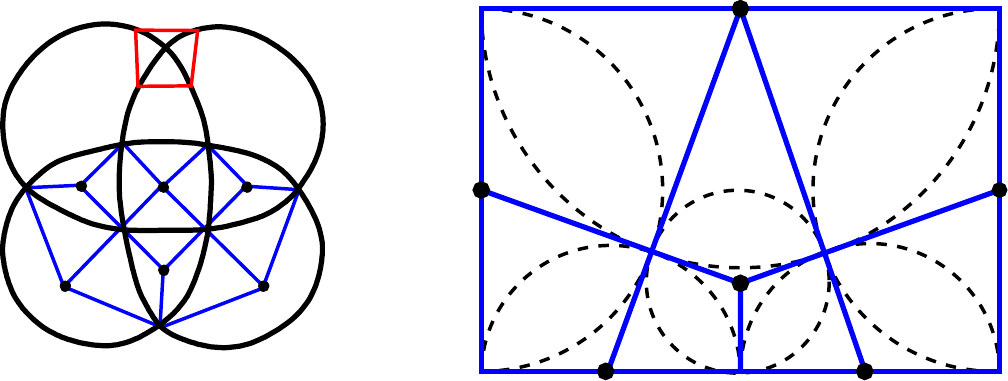}
  \caption{Left: The graph $G$, shown with blue edges, is obtained from $\Gamma(L)$, as in \refthm{RightAngledKites}, with the crossing $c$ indicated in the red box. Right: $G$ is realized as a tiling of the rectangle $R(c)$ by right kites.}  
\label{Fig:8_18-2}
\end{figure}

\begin{proof}
Let $P=P(L)$ be the checkerboard ideal polyhedron of $L$.  Since the diagram of $L$ is already rationally reduced, the $1$-skeleton of $P$ is the diagram graph of $L$, denoted $\Gamma(L)$. Hence, $P$ is $4$-valent and the number of ideal vertices of $P$ equals the number of crossings in $L$. By \reflem{Andreev}, $P$ admits a right-angled ideal hyperbolic structure.

We can realize $P$ in $\HH^3$ such that the ideal vertex corresponding to the crossing $c$ is $\infty$. The four faces incident to $c$ are vertical half-planes in $\HH^3$ intersecting at right-angles. The faces of $P$ not incident to $\infty$ are hemispheres which intersect each other and the vertical faces at right-angles.  Thus, $P$ is the polyhedron inside the chimney formed by the vertical faces and above the hemispheres, as in \reffig{8_18}(c).

Let $R(c)$ denote the rectangle in $\CC \subset \partial \HH^3$ bounded by lines which are the boundaries of vertical faces of $P$. Inside $R(c)$, an orthogonal circle pattern is formed by circles which are boundaries of the hemispherical faces of $P$. The ideal vertices of $P$, other than $\infty$, are the points of intersection of four circles, half-circles or lines in $R(c)$, as in \reffig{8_18}(d).

Let $\mathcal{G} \subset R(c)$ be the graph whose vertices are the centers of the circles in $R(c)$ and the ideal vertices of $P$ other than $\infty$, and whose edges are (Euclidean) line segments joining the center of each circle to the ideal vertices lying on that circle. Note that the boundary of the rectangle $R(c)$ consists of edges and vertices of $\mathcal{G}$.  
Since the ideal vertices of $P$, other than $\infty$, lie on the circles, every face of $\mathcal{G}$ is a quadrilateral such that two opposite vertices are centers of intersecting circles, and the other two vertices are the points of intersection. Hence every face of $\mathcal{G}$ is a kite.
The angle of intersection of two hemispheres in $\HH^3$ equals the angle of intersection of its boundary circles, which in turn equals the (equal and opposite) angles of the kite formed by the center of the circles and the points of intersection. Since $P$ is right-angled, $\mathcal{G}$ is a rectangle tiled by right kites.

On the other hand, the vertices of $G$ are the vertices of $\Gamma(L)$, other than $c$, along with the central vertex of each face of $\Gamma(L)$, other than those meeting $c$. The edges of $G$ are the edges from each central vertex to the vertices of that face. The faces of $G$ are quadrilaterals which correspond to the edges of $\Gamma(L)$ that do not meet $c$.  Each side of $R(c)$ corresponds to the boundary of each of the four faces meeting $c$. The degree of each vertical polygonal face of $P$ is the degree of the corresponding face.  Since the circles in $R(c)$ correspond to the faces of $P$ not adjacent to $\infty$, this implies that the vertices and edges of $\mathcal{G}$ and $G$ coincide, and hence they are isomorphic as planar graphs. 

The volume formula then follows from the result of Milnor~\cite{milnor}. 
\end{proof}

\subsection{Right-angled volume for weaving knots}\label{sec:Wpq}
We now apply \refthm{RightAngledKites} to an infinite family of knots
and links.  A {\em weaving knot}
$W(p,q)$ is the alternating knot or link with the same projection as
the standard closed $p$--braid $(\sigma_1\ldots\sigma_{p-1})^q$
diagram of the torus knot or link $T(p,q)$. The knot $8_{18}$ is the
weaving knot $W(3,4)$. See Table~\ref{Table1} below for several other
examples of weaving knots.  See \cite{ckp:weaving} for more details on
weaving knots.

\begin{theorem}\label{thm:Wpq}
For all weaving knots $W(3,q)$, we can compute $\volp(W(3,q))$ by an
algorithm that requires solving a one-variable polynomial equation.
\end{theorem}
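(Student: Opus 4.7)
The plan is to apply Theorem~\ref{Thm:RightAngledKites} directly. By Example~\ref{Ex:TanglePolyhedra}(3), for $q \geq 3$ the weaving knot $W(3,q)$ has a prime alternating diagram admitting no visible Conway spheres, so it is already rationally reduced and has no nontrivial $4$-circuits. Hence $\volp(W(3,q)) = 2\,\vol(P(W(3,q)))$, where $P(W(3,q))$ is the unique right-angled ideal polyhedron whose $1$-skeleton is $\Gamma(W(3,q))$, and Theorem~\ref{Thm:RightAngledKites} reduces this volume to a Lobachevsky sum over the angles of a right-kite tiling of a single Euclidean rectangle $R(c)$.

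First I would pin down the combinatorics. The graph $\Gamma(W(3,q))$ has $q$ crossings of type $\sigma_1$ and $q$ of type $\sigma_2$, two $q$-gonal faces (a ``top'' $T$ containing all $\sigma_1$-crossings and a ``bottom'' $B$ containing all $\sigma_2$-crossings), and $2q$ triangular faces. Choosing $c$ to be a $\sigma_1$-crossing makes the top side of $R(c)$ correspond to $T$ and the other three sides correspond to three triangles adjacent to $c$; the interior of $R(c)$ then contains $2q-2$ circles, namely one large circle for $B$ and $2q-3$ small circles for the remaining triangles. The corners of $R(c)$ are the four crossings adjacent to $c$, and the only nontrivial ideal vertices on the boundary sit on the top edge (there are $q-3$ of them, one for each $\sigma_1$-crossing other than $c$ and its two neighbors).

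Next I would exploit symmetry. The $\ZZ_2$ reflection of the diagram that fixes $c$ preserves $R(c)$ and its orthogonal circle pattern, and the $\ZZ_q$ braid rotation persists as a discrete translation symmetry in the direction along the top edge. Together these force the triangle-circles to come in a single orbit per ``row'' along the braid, so I expect the entire arrangement to be pinned down by a single angle parameter $\theta$ — for concreteness, the angle at which a triangle-circle nearest $c$ meets the large circle corresponding to $B$. The angle at which the next triangle-circle meets $B$ is then determined by a Möbius transformation acting on $t = \tan(\theta/2)$ (obtained from the orthogonal incidence constraints of adjacent circles), and iterating this transformation propagates the configuration along the braid.

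The closing-up condition — that after $q$ steps the final triangle-circle matches correctly with the one opposite $c$ across the reflection symmetry, so that $R(c)$ actually closes up into a rectangle — clears denominators to a polynomial equation $P_q(t) = 0$ in the single variable $t$. Andreev rigidity (Theorem~\ref{Thm:Andreev}) guarantees a unique admissible real root $t_0 \in (0,1)$, after which every kite angle $\theta_e$ in the formula of Theorem~\ref{Thm:RightAngledKites} is a rational function of $t_0$, and one evaluates $\volp(W(3,q)) = 2\sum_e (\Lambda(\theta_e/2) + \Lambda((\pi-\theta_e)/2))$. The main obstacle I anticipate is verifying that the braid-direction recursion genuinely is a single Möbius transformation in $t$, so that its $q$th iterate produces a rational rather than transcendental closing-up condition; this should follow from the fact that for $p=3$ only $T$ and $B$ are ``large'' faces, so each successive triangle-circle shares orthogonal incidences with exactly the same configuration of neighboring circles, keeping the local combinatorial pattern uniform along the entire braid.
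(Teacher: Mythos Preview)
Your overall strategy matches the paper's: apply Theorem~\ref{Thm:RightAngledKites}, parametrize the right-kite tiling of $R(c)$ by a single real variable, and obtain a polynomial closing-up condition. However, your mechanism for the reduction to one variable is flawed. Once the crossing $c$ is sent to $\infty$, the $\ZZ_q$ braid rotation no longer preserves the picture (it moves $c$), so it does \emph{not} persist as a translation symmetry of $R(c)$; the triangle-circles are not in a single orbit, and their radii and the kite angles $\theta_i$ genuinely vary along the braid direction. Consequently the step-to-step recursion is not a single M\"obius map in one angle variable. The paper finds that passing from one stage to the next requires two quantities, edge lengths $x_i$ and $y_i$ in a $2\times 2z$ rectangle, related by $x_i^2=y_iy_{i-1}$ and $x_{i+1}=(x_1x_iy_i^2+y_i^2)/(x_i-x_1y_i^2)$.

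The fix you need, in place of the symmetry/M\"obius heuristic, is simply to observe that this two-term recursion expresses each $x_i,y_i$ as an explicit rational function of $x_1$ alone (with $y_0=1$ and $z=1/x_1$), by induction on $i$. The closing condition across the midline of the rectangle then gives $\sum x_i = 1/x_1-1$ for $q=2n+1$, or $\sum x_i=1/(2x_1)$ for $q=2n$; the paper treats the two parities separately, which your sketch omits. Clearing denominators yields a single polynomial in $x_1$, and all kite angles are recovered from $\tan(\theta_i/2)=y_i/x_i$. So your proposal lands in the right place once the nonexistent translation symmetry is replaced by this explicit rational recursion.
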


\begin{proof}
\refthm{volp-tangles}\,(1) implies $\volp(W(3,q))=0$ for $q\leq 2$. To compute $\volp(W(3,q))$ for $q\geq 3$, we follow the procedure in \refthm{RightAngledKites} to obtain its right kite structure. We start with the case $W(3,2n+1)$.  See \reffig{Wpq_odd}, which shows a knot diagram for $W(3,7)$ and its right kite structure, with certain kite angles and edge lengths in a $2\times 2z$ rectangle.  For $W(3,2n+1)$, the kite angles are $\theta_i$, and the edge lengths are $z$ and $\{x_i,\, y_i\}$ for $i=1,\ldots, n-1$, extending the pattern shown in \reffig{Wpq_odd} for $n=3$. All the other angles and edge lengths can then be determined immediately from these.  Note that for all $W(3,2n+1)$, the two kites centered at the bottom are squares.

\begin{figure}
  \begin{center}
    \includegraphics{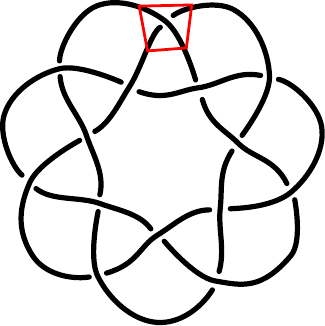} \quad
    \import{figures/}{W_3_7.pdf_tex}
\caption{Left: $W(3,7)$ diagram with chosen crossing. Right: right kite structure for $W(3,7)$, with certain kite angles and edge lengths labeled.}
\label{Fig:Wpq_odd}
  \end{center}
\end{figure}

The formulas below are for $\{x_i, y_i, \theta_i \,|\,i=1,\ldots, n-1\}$, and we define $y_0=1$.
\[
\tan\frac{\theta_1}{2} = x_1 = \frac{y_1}{x_1} =\frac{1}{z}
\quad \mbox{hence } \quad y_1=x_1^2 \ \text{and} \ z=\frac{1}{x_1}
\]
\[
\tan\frac{\theta_i}{2} = \frac{y_i}{x_i} = \frac{x_i}{y_{i-1}}
\quad \mbox{hence } \quad x_i^2 = y_i y_{i-1}
\]
\[
  \tan\frac{\theta_{i+1}-\theta_i}{2} = \frac{y_i}{z}=x_1 y_i
\quad \mbox{hence } \quad
\frac{\tan\frac{\theta_{i+1}}{2}-\tan\frac{\theta_i}{2}}{1+\tan\frac{\theta_{i+1}}{2}\tan\frac{\theta_i}{2}} =x_1 y_i.
\]
This implies
\[  x_i y_{i+1} - x_{i+1} y_i = x_1 y_i ( x_i x_{i+1} + y_i y_{i+1}), \mbox{ thus} \]
\[  x_i (x_{i+1}^2/y_i) - x_{i+1} y_i = x_1 y_i (x_i x_{i+1} +x_{i+1}^2), \mbox{ so}\]
\[ x_{i+1} = \frac{x_1 x_i y_i^2 + y_i^2}{x_i-x_1 y_i^2}. \]
These equations imply that $x_{i+1}$ can always be expressed by a
rational function in just the variable $x_1$.  For example, for all
$W(3,q)$, $x_2 = (x_1^5+x_1^3)/(1-x_1^4)$.  In
addition, using the midline of the rectangle, $z+y_{n-1}=2$, which implies
that $\sum x_i = z-1 = 1/x_1 -1$.
This yields one polynomial equation in $x_1$, whose solution provides a
solution for all edge lengths, $x_i,\, y_i$.

For $W(3,2n)$, the right kite structure extends the pattern
shown in \reffig{8_18}(e), and we get the same equations as
above, except $\sum x_i = z/2 = 1/(2x_1)$.

Finally, the kite angles $\theta_i$ can be computed from the edge
lengths.  By \refthm{RightAngledKites}, this provides an algorithmic
solution for $\volp(W(3,q))$ for each $q$.
\end{proof}

See Table~\ref{Table1} for numerical computations of $\volp(W(p,q))$
for several small values of $p,\, q$.  The equality for the Borromean
link $L$ is exact: all four of its right kites are squares, so
$\volp(L)=16\,\Lambda(\pi/4)=\vol(L)$.  The exact equality for $W(4,4)$ was proved in \cite{Gan}.

\begin{table}[h]
\caption{Right-angled volumes for several low-crossing weaving knots}
\label{Table1}
\begin{center}
\begin{tabular}{l|c|c}
$W(p,q)=L$ & $\volp(L)$ & $\vol(L)$ \\
\hline
$W(3,2)=4_1$ & \ 0 \hspace*{0.7 cm} & \ 2.0299  \\
$W(3,3)=$ Borromean link & \ 7.3277 & \ 7.3277  \\
$W(3,4)=8_{18}$ & 12.0461 & 12.3509  \\
$W(4,3)=9_{40}$ & 14.6554 & 15.0183 \\
$W(3,5)=10_{123}$ & 16.2758 & 17.0857 \\
$W(3,6)=L12a1882$ & 19.4287 & 21.6316 \\
$W(3,7)=14a19470$ & 24.2126 & 26.0544 \\
$W(4,4)=L12a2008$ & 24.0922 & 24.0922 \\
\end{tabular}
\end{center}
\end{table}

\subsection{Asymptotically sharp volume bounds}\label{Sec:Sharp}

Our results in \cite{CKPgmax, ckp:gbal} imply that the lower bound
from right-angled volume is asymptotically sharp for many sequences of
alternating links.  Using our results above, we prove that we can
remove the ``no cycle of tangles'' condition from
Theorems~\cite[Theorem~1.4]{CKPgmax} and \cite[Theorem~6.7]{ckp:gbal}.

Let $\W$ be the infinite square weave and let $\Q$ be the triaxial link,
which are biperiodic alternating links discussed in \cite{CKPgmax,
  ckp:gbal}.  Let $W$ and $Q$ be the respective toroidally alternating
quotient links.  Let $\vtet\approx 1.01494$ and $\voct\approx 3.66386$
be the hyperbolic volumes of the regular ideal tetrahedron and the
regular ideal octahedron, respectively. Let $c(K)$ denote the crossing
number of $K$.

\begin{theorem}\label{Thm:Improved}
Let $K_n$ be any sequences of alternating hyperbolic link diagrams which satisfy {\em F{\o}lner convergence almost everywhere}, as in
\cite[Definition 6.1]{ckp:gbal}, to $\W$ and $\Q$, respectively.  Then 
\[ K_n\toF\W \implies \lim_{n\to\infty} \frac{\vol(K_n)}{c(K_n)} =
\lim_{n\to\infty} \frac{2\pi\log\det(K_n)}{c(K_n)} =
\frac{\vol((T^2\times I)-W)}{c(W)} = \voct, \]
\[ K_n\toF\Q \implies \lim_{n\to\infty} \frac{\vol(K_n)}{c(K_n)} =
\lim_{n\to\infty} \frac{2\pi\log\det(K_n)}{c(K_n)} =
\frac{\vol((T^2\times I)-Q)}{c(Q)} = \frac{10\,\vtet}{3}. \]
\end{theorem}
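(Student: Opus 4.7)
The plan is to use the right-angled volume lower bound $\volp(K_n) \le \vol(K_n)$ from \refthm{GutsVolumeBound} as a drop-in replacement for the guts-based bound of \cite{CKPgmax, ckp:gbal}. The ``no cycle of tangles'' hypothesis was only needed there to force the guts of the checkerboard decomposition to coincide with the entire cut manifold; the tangle-polyhedron reformulation of this paper, via \refthm{AndreevEqualsGuts}, handles cycles of tangles intrinsically.

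First I would dispose of the two ``soft'' pieces of the three-way equality that do not require the hypothesis we are removing. The determinant density
\[ \lim_{n\to\infty}\frac{2\pi\log\det(K_n)}{c(K_n)} \]
equals $\voct$ (respectively $\tfrac{10}{3}\vtet$) by Kenyon's theorem on limits of spanning-tree Mahler measures for F{\o}lner convergent periodic planar graphs, as already applied in \cite{ckp:gbal}; this argument is insensitive to the tangle structure of $K_n$. The upper bound $\vol(K_n)\le \voct\, c(K_n)$ (resp.\ $\le \tfrac{10}{3}\vtet\, c(K_n)$) comes from standard upper volume bounds for alternating and toroidally alternating links. A direct computation in the biperiodic quotients gives $\vol((T^2\times I)\setminus W)/c(W)=\voct$ and $\vol((T^2\times I)\setminus Q)/c(Q)=\tfrac{10}{3}\vtet$. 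Hence only the matching lower bound on $\vol(K_n)/c(K_n)$ needs new input.

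For the lower bound, I would show
\[ \liminf_{n\to\infty}\frac{\volp(K_n)}{c(K_n)} \;\ge\; \voct \quad\text{(resp.\ }\tfrac{10}{3}\vtet\text{)}. \]
By the Andreev polyhedron description, $\volp(K_n)$ is twice the sum of volumes of the right-angled polyhedra obtained from $\Gamma(K_n)$ by rational reduction and splitting along nontrivial $4$-circuits. Neither $\W$ nor $\Q$ has any nontrivial $4$-circuit in its projection graph, so F{\o}lner convergence forces the density of crossings of $K_n$ lying within a uniformly bounded combinatorial distance of a nontrivial $4$-circuit, a rational tangle, or the F{\o}lner boundary to vanish as $n\to\infty$. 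Each remaining ``interior'' crossing $c$ therefore sits in an Andreev polyhedron whose $1$-skeleton agrees, on a large combinatorial ball around $c$, with the universal cover of the right-angled polyhedron associated to $\W$ or $\Q$.

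The main obstacle is then a continuity-under-combinatorial-convergence statement for volumes of right-angled ideal hyperbolic polyhedra: if the $1$-skeleton of the Andreev polyhedron containing $c$ matches the $\W$- (resp.\ $\Q$-) model on a larger and larger combinatorial ball, then by Andreev rigidity (\refthm{Andreev}) together with a standard compactness argument for ideal hyperbolic polyhedra, the local hyperbolic structure near $c$ converges geometrically to that of the biperiodic model, and the volume attributable to $c$ tends to $\voct$ (resp.\ $\tfrac{10}{3}\vtet$). Summing these local contributions over the interior crossings and using that the non-interior crossings form a vanishing proportion of $c(K_n)$ yields the desired $\liminf$ inequality. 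Squeezing $\vol(K_n)/c(K_n)$ between this lower bound and the upper bound, and matching with the determinant limit, gives the three-way equality, without any assumption on cycles of tangles.
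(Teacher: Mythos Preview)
Your proposal is correct and follows essentially the same route as the paper: replace the checkerboard-polyhedron lower bound from \cite{CKPgmax, ckp:gbal} by $\volp(K_n)\le\vol(K_n)$, and use that neither $\W$ nor $\Q$ contains the offending tangle structures, so F{\o}lner convergence forces them into the asymptotically negligible part $G(K_n)-G_n$. The paper's proof is terser, simply asserting that with this substitution ``the proofs are otherwise unchanged,'' whereas you spell out the local-convergence-of-right-angled-structures argument that those cited proofs already contain.
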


\begin{proof}
For links without a cycle of tangles, the proofs of
Theorems~\cite[Theorem~1.4]{CKPgmax} and \cite[Theorem~6.7]{ckp:gbal}
relied on a lower volume bound coming from the checkerboard polyhedra
of $K_n$, given an ideal right-angled hyperbolic structure.  Now, we
use instead the guts polyhedra of $K_n$, whose volume is $\volp(K_n)$,
which is a lower volume bound by \refthm{GutsVolumeBound}.  Since
neither $\W$ nor $\Q$ contains a cycle of tangles, any cycles of
tangles in $K_n$ must be in $G(K_n)-G_n$.  Thus, if we
use the guts polyhedra, the proofs are otherwise unchanged.
\end{proof}

\begin{corollary}
The lower bound, $\volp(L)$ is asymptotically sharp; i.e., 
there exists a sequence of alternating hyperbolic links
$K_n$ such that 
$$ \lim_{n\to\infty}\frac{\volp(K_n)}{\vol(K_n)}=1.$$
\end{corollary}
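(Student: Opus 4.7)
The plan is to invoke \refthm{Improved} directly. That theorem furnishes explicit sequences of alternating hyperbolic link diagrams $K_n$ which F\o lner converge almost everywhere to $\W$ (for example, weaving links $W(p,q)$ with $p,q\to\infty$) or to $\Q$, and satisfy
$$\lim_{n\to\infty}\frac{\vol(K_n)}{c(K_n)}=\voct \quad\text{or respectively } \frac{10\vtet}{3}.$$

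The next step is to extract the analogous limit for $\volp(K_n)$. Inspecting the proof of \refthm{Improved}, the lower volume bound on $\vol(K_n)$ used there is obtained from the guts polyhedra, whose total volume is by definition $\volp(K_n)$. The F\o lner convergence argument shows that, up to terms of lower order in $c(K_n)$, a full proportion of the crossings of $K_n$ are covered by guts polyhedra that asymptotically approximate the fundamental-domain polyhedra in $(T^2\times I)-W$ (or $-Q$). This gives
$$\liminf_{n\to\infty}\frac{\volp(K_n)}{c(K_n)} \geq \voct.$$
Combining with $\volp(K_n)\leq \vol(K_n)$ from \refthm{GutsVolumeBound} and the previous limit, this forces
$$\lim_{n\to\infty}\frac{\volp(K_n)}{c(K_n)} = \voct.$$

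Taking the ratio of the two limits yields
$$\lim_{n\to\infty}\frac{\volp(K_n)}{\vol(K_n)} = \lim_{n\to\infty}\frac{\volp(K_n)/c(K_n)}{\vol(K_n)/c(K_n)} = \frac{\voct}{\voct}=1,$$
as required; the analogous computation works for the $\Q$-sequence with $10\vtet/3$ in place of $\voct$. The main obstacle is confirming that the F\o lner-style counting argument of \cite{CKPgmax, ckp:gbal} really does control $\volp(K_n)$, and not merely the volumes of the checkerboard polyhedra. This is precisely what \refthm{Improved} secures by using the guts polyhedra in place of the checkerboard polyhedra, so the asymptotic sharpness of $\volp(L)\leq \vol(L)$ is effectively built into that theorem.
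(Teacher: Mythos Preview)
Your proposal is correct and follows essentially the same route as the paper: both arguments invoke \refthm{Improved}, note that its proof uses the guts polyhedra so that the lower bound established there is precisely $\volp(K_n)/c(K_n)\to\voct$ (or $10\vtet/3$), and then take the ratio with the corresponding limit for $\vol(K_n)/c(K_n)$. Your version is slightly more explicit about the squeeze via \refthm{GutsVolumeBound}, but the substance is the same.
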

\begin{proof}
By \refthm{Improved}, there exist sequences of alternating links
$K_n$ and $K'_n$, such that
\[  \lim_{n\to\infty}\frac{\volp(K_n)}{c(K_n)} = \lim_{n\to\infty}\frac{\vol(K_n)}{c(K_n)} = \voct \ \text{ and } \
\lim_{n\to\infty}\frac{\volp(K'_n)}{c(K'_n)} =\lim_{n\to\infty}\frac{\vol(K'_n)}{c(K'_n)} = \frac{10\,\vtet}{3}.
\]
It follows that for any sequence $K_n$ such that $K_n\toF\W$ or $K_n\toF\Q$,
\[ \lim_{n\to\infty}\frac{\volp(K_n)}{\vol(K_n)}=1. \qedhere \]
\end{proof}

For an example of such a sequence, let $K_n=W(p_n,q_n)$ for any $p_n,\, q_n\to\infty$.  Then $K_n\toF\W$.

\subsection{Comparison to other volume bounds}\label{Sec:Discussion}

By~\cite{AgolStormThurston, Lackenby}, for any hyperbolic alternating link diagram $L$ with twist number $t(L)$,
\begin{equation}\label{ASTbound}
\frac{\voct}{2}\, (t(L)-2) \: \leq \: \vol(L). 
\end{equation}

Both this lower bound and the one we obtain from $\volp(L)$ are
equalities for the Borromean link.  In general, the bound in
(\ref{ASTbound}) seems to provide information about the volume of the
``algebraic'' part of $L$. By contrast, $\volp(L)$ gives
no information on volumes of the algebraic part, but seems to give
better estimates when the algebraic part is ``small.''  For example, by \refthm{Improved}, there
are many families of alternating links $K_n$ with
$\vol(K_n)/c(K_n)\to\voct$, such that $t(K_n)/c(K_n)\to 1$. For such
alternating links $K_n$, including weaving links, $\volp(K_n)$ will be
at least twice the lower bound in (\ref{ASTbound}).

For the examples in Table~\ref{Table1}, $\volp(L)$ beats all previous
diagrammatic lower volume bounds. However, as discussed in
Section~\ref{Sec:topology}, for any visibly algebraic link $L$, there
are no tangle polyhedra, so $\volp(L)=0$.  For example, any Montesinos
link $L$, whose hyperbolic volume can be arbitrarily large, has
$\volp(L)=0$.  Similarly, for any arborescent link $L$ with no hidden
Conway spheres, $\volp(L)=0$, but the lower bound in (\ref{ASTbound})
is non-zero.  As discussed above, there also exist arborescent links
whose alternating diagram does not admit visible Conway spheres, such
as the Borromean link $B$ for which $\volp(B)=\vol(B)$.

It would be useful to be able to combine estimates on algebraic parts of links using \eqref{ASTbound} with the volume bound of \refthm{GutsVolumeBound}. However, at this time our techniques do not allow us to combine the two arguments used to prove the different bounds.

\subsection{Right-angled links}\label{sec:right-angled-knots}
We say that a hyperbolic link $L$ is \emph{right-angled} if $S^3-L$
with the complete hyperbolic structure admits a decomposition into
ideal hyperbolic right-angled polyhedra. For example, the Whitehead
link and Borromean link are right-angled alternating links.  But
$\volp(L)=\vol(L)$ only for the Borromean link; $\volp(L)=0$ for the
Whitehead link.  Among non-alternating links, the class of fully
augmented links is right-angled \cite{CDW09, Purcell-auglink}.

Hyperbolic right-angled $3$--manifolds have interesting
properties. The right-angled decomposition gives an immersed totally
geodesic surface in the $3$--manifold arising from the faces of the
polyhedra. Another property is that their fundamental groups are
virtually special \cite{CDW09}.  In general, it seems difficult to
prove that a hyperbolic $3$--manifold is not right-angled. For
example, Calegari proved that the knot $8_{20}$ has no immersed
totally geodesic surfaces \cite[Corollary 4.6]{Calegari2006}. This
implies that $8_{20}$ is not right-angled.

\begin{conjecture}\label{Conj:knot1}
  There does not exist a right-angled knot.
\end{conjecture}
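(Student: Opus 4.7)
The plan is to obstruct the existence of a right-angled decomposition of $S^3 - K$ for a knot $K$ by analyzing the cusp torus together with the totally geodesic faces of the polyhedra. First I would make a local observation: in any right-angled ideal polyhedron in $\HH^3$, the horospherical cross-section at each ideal vertex is a Euclidean polygon with every interior angle equal to $\pi/2$, hence necessarily a rectangle with exactly four edges meeting at that ideal vertex. When such polyhedra are glued to form $S^3-K$, these rectangles assemble into a tiling of the cusp torus $T^2$; and because four right dihedrals must fit around each edge of the manifold, exactly four rectangles meet at each vertex of the tiling. Consequently, when lifted to the universal cover $\RR^2$, every tiling edge points in one of two fixed perpendicular directions.

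Next I would study how the meridian $\mu$ of $K$ interacts with this tiling. By Mostow--Prasad rigidity the hyperbolic structure on $S^3-K$ is unique, so $\mu$ is a well-defined geodesic slope on $T^2$ with respect to the Euclidean metric coming from the tiling. Each face of each polyhedron determines, via the reflection it supports, an immersed totally geodesic surface in the reflection orbifold whose boundary components on $T^2$ are closed geodesics parallel to one of the two tiling directions. The proposed approach is to show that the meridian slope cannot be compatible with this entire family of axis-aligned geodesic slopes: performing the meridional Dehn filling recovers $S^3$, so any incompressible surface on $T^2$ whose boundary has slope $\mu$ must compress after filling. One would then attempt to show that some totally geodesic surface $\Sigma$ arising from polyhedral faces has boundary slope which, combined with $\mu$, produces a compressing disk in $S^3$, contradicting the essentiality of $\Sigma$ in the hyperbolic manifold.

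The hard part, and the reason the authors state this only as a conjecture, is that the rectangular tiling of $T^2$ by itself does not obstruct a meridian: any lattice in $\RR^2$ admits an equivariant tiling by axis-aligned rectangles. A proof needs an additional global ingredient that singles out knot complements. Candidates include: (i) number-theoretic restrictions on the invariant trace field of right-angled hyperbolic $3$-manifolds (which for reflection groups is constrained), combined with Reid-type restrictions on cusp and trace fields of knot complements in $S^3$; (ii) a homological argument using that the reflection group forces many essentially disjoint totally geodesic surfaces, from which a transfer or branched-cover computation on the orbifold quotient would push $b_1(S^3-K)$ above $1$, contradicting $H_1(S^3-K;\ZZ)=\ZZ$; or (iii) promoting Calegari's totally-geodesic-surface method, which handled $8_{20}$, to a family obstruction by classifying which right-angled ideal polyhedra can appear in a one-cusped hyperbolic $3$-manifold. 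I expect (iii) to be the main technical hurdle: it would require a census or compactness argument for right-angled ideal polyhedra paired with careful cusp-slope bookkeeping, and no such classification currently exists. A realistic intermediate target would be to establish the conjecture for the subclass of \emph{alternating} right-angled knots using \refthm{AndreevEqualsGuts}, since in that setting the polyhedra are controlled by the diagrammatic data, and the conditions on nontrivial $4$-circuits of \refthm{Andreev} can be combined with the one-cusp hypothesis to rule out the Andreev realization directly.
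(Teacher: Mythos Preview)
The statement is a \emph{conjecture}, and the paper does not prove it. The authors offer only supporting evidence: Calegari's result that $8_{20}$ has no immersed totally geodesic surface, the volume obstruction $\vol(L)<\voct$, a computer verification through 11 crossings using the Vesnin--Egorov census, and (in the remark) Gan's classification of links with two totally geodesic checkerboard surfaces. There is no proof in the paper for you to compare against.

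You clearly recognise this, and your write-up is an honest research outline rather than a claimed proof. Your local observation is correct: ideal right-angled polyhedra are 4-valent at every ideal vertex, the horospherical links are Euclidean rectangles, and since four polyhedra meet around each edge the cusp torus carries a rectangular tiling with axis-aligned edges. But, as you yourself say, this alone does not obstruct a knot: any flat torus admits such a tiling, so nothing about the cusp shape or the meridian slope is forced. The argument you sketch about a totally geodesic face-surface $\Sigma$ compressing after meridional filling does not go through as stated, because the immersed surfaces coming from polyhedral faces need not have boundary slope equal to the meridian, and even when they do, compressibility in $S^3$ does not contradict incompressibility in $S^3-K$.

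Of your three candidate global ingredients, (ii) is the most concrete but also the most fragile: the reflection orbifold does produce many totally geodesic surfaces, yet these are typically non-separating and immersed, and there is no transfer map that would force $b_1>1$ for the knot exterior itself. Approach (i) runs into the difficulty that right-angled reflection groups need not have especially constrained trace fields. Approach (iii) is essentially what the paper's cited evidence does in miniature (the Vesnin--Egorov enumeration), and you are right that no general classification exists. Your suggested intermediate target for alternating knots is reasonable, though note that Gan's subsequent work already handles the checkerboard case; extending beyond that would require controlling right-angled decompositions that are \emph{not} the checkerboard one, which the machinery of \refthm{AndreevEqualsGuts} does not address.
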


Together with \refthm{GutsVolumeBound}, Conjecture~\ref{Conj:knot1} would imply that for any knot $K$,
\[ \volp(K) \:<\: \vol(K). \]

Volume bounds for ideal right-angled polyhedra provide another
obstruction for links to be right-angled.  By~\cite{atkinson:volume},
the regular ideal octahedron has the smallest volume among all such
polyhedra.  Thus, any hyperbolic link $L$ with $\vol(L)<\voct$ cannot
be right-angled; for alternating links $L$, if $\vol(L) < 2\voct$,
then $\volp(L)=0$.

Recently, Vesnin and Egorov~\cite{VesninEgorov}
enumerated the volumes of ideal right-angled polyhedra with at most
$23$ faces.  Using their enumeration, we have verified
Conjecture~\ref{Conj:knot1} for all knots up to 11 crossings.

\begin{question}\label{Ques:strict}
  Does there exist a hyperbolic alternating link $L$, besides the Borromean link, for which $\displaystyle \volp(L) \:=\: \vol(L)$?
\end{question}

\begin{remark}
Since we posted this paper on the arXiv, Gan showed that there are
exactly three links with two totally geodesic checkerboard
surfaces~\cite{Gan}. These are the Borromean link with underlying
graph the octahedron, a $4$--component link with underlying graph the
cuboctahedron (which is $W(4,4)$ from Table~\ref{Table1}), and a $6$--component link
with underlying graph the icosidodecahedron. These are illustrated in
\cite[Figure~6]{Gan}. In particular, there are no knots whose
checkerboard surfaces are right-angled totally geodesic, resolving a special case and providing more evidence for \refconj{knot1}.
Moreover, for these three links, $\volp(L)=\vol(L)$, giving two more
examples to \refques{strict}. See \cite[Remark~3.15]{Gan}.
Thus, to extend \refthm{GutsVolumeBound}, it is reasonable to conjecture that for any hyperbolic alternating link $L$, except for the three links in \cite{Gan}, 
\[ \volp(L) \: < \: \vol(L). \]
\end{remark}

\bibliographystyle{amsplain}
\bibliography{references}

\end{document}